\theoremstyle{plain}
\newtheorem{Th}{Theorem}[section]
\newtheorem{Lem}[Th]{Lemma}
\newtheorem{Prop}[Th]{Proposition}
\newtheorem{Def}[Th]{Definition}
\newtheorem{Thintro}{Theorem}
\newcommand{\NN}{\mathbb{N}}
\newcommand{\KK}{\mathbb{K}}
\renewcommand{\AA}{\mathbb{A}}
\newcommand{\ie}{\textit{i.e.},\;}
\newcommand{\ZZ}{\mathbb{Z}}
\newcommand{\PP}{\mathbb{P}}
\newcommand{\EE}{\mathbb{E}}
\newcommand{\CC}{\mathbb{C}}
\newcommand{\RR}{\mathbb{R}}
\newcommand{\Wedge}{\textstyle\bigwedge}
\newcommand{\eps}{\varepsilon}
\newcommand{\sign}{\mathrm{sign}}
\newcommand{\SL}{\mathrm{SL}}
\newcommand{\GL}{\mathrm{GL}}
\title{Convergence to stable laws for products of random matrices}
\subjclass[2020]{Primary 60B20, 60F10, 37H15, 60B15, 60F15, 60F25, 60B10}
\author{Axel P\'eneau}
\address{Université de Tours, Université d’Orléans, CNRS, IDP, UMR 7013, Tours, France}
\email{axel.peneau@univ-tours.fr}
\date{\today}
\begin{document}

	\begin{abstract}
		Under reasonable algebraic assumptions and under an infinite second order moment assumption, we show that the logarithm of the norm (log-norm) of a product of random i.i.d. matrices with entries in $\mathbb{R}$ or in any other local field satisfies a generalized Central Limit Theorem (GCLT) in the sense of Paul Lévi. 
		
		The proof is based on a weak law of large number for the difference $\Delta_n$ between the log-norm of the product of the first $n$ matrices and the sum of their log-norms. 
		This weak law of large numbers morally says that $\Delta_n$ behaves like a sum of i.i.d. random variables that have a finite moment of order $2q$ as long as the log-norm of each matrices has a finite moment of order $q$ for a given $q > 0$.
		
		This gain of moment is the central result of the present paper and is based on the construction of pivotal times. Moreover, these results admit a nice higher rank extension when one looks at the full Cartan projection instead of the log-norm.
	\end{abstract}
	\maketitle
	
	\tableofcontents
	
	\section{Introduction}
	
	\subsection{Notations and motivation}
	Through the present paper, $E$ denotes a Euclidean, Hermitian or ultra-metric space of dimension $d \ge 2$ over a locally compact field denoted by $\KK$.
	Let $\mathrm{GL}(E)$ be the space of linear endomorphism of $E$ and let $\mathrm{SL}(E)$ be the space of endomorphism of $E$ that have determinant $1$.
	
	A semi-group\footnote{This simply means that for all pair $g, g' \in \Gamma$, the composition $g g'$ is also in $\Gamma$.} $\Gamma < \mathrm{GL}(E)$ is said to be \textbf{strongly irreducible} if there is no non-trivial $\Gamma$-invariant subset $\{0\} \subsetneq A \subsetneq E$ that is the union of finitely many subspaces.
	We say that $\Gamma$ is \textbf{proximal} if there exists an element $g \in \Gamma$ such that $\lambda_1(g) > \lambda_2(g)$, where $\lambda_1(g) \ge \lambda_2(g) \ge \dots \ge \lambda_d(g)$ denote the logarithms of the moduli of the eigenvalues of $g$ counted with multiplicity.
	In other words $\Gamma$ is proximal if and only if there exists a rank $1$ projection in the closure of $\KK \Gamma$.
	
	Let $(\gamma_k)_{k \ge 0}$ be a sequence of independent and identically distributed random matrices and write $\nu$ for the law of $\gamma_0$, as a short notation, we simply write $(\gamma_k)_{k \ge 0} \sim \nu^{\otimes\NN}$.  
	We are interested in the random walk $(\overline{\gamma}_n)_{n \ge 0}$, which is the sequence of left-to right partial products of $(\gamma_k)$, \ie $\overline\gamma_n  = \gamma_0 \cdots \gamma_{n-1}$ for all $n \ge 0$ and therefore $\overline\gamma_n \sim \nu^{*n}$.
	More specifically, we are interested in the limit behaviour of the law of the log-norm $\log\|\overline\gamma_n\|$ when the support of $\sum \nu^{*n}$ \textemdash\, denoted by $\Gamma_\nu$ and which is the intersection of all closed set that contain all the $\overline{\gamma}_n$'s with probability $1$ \textemdash\,  is strongly irreducible and proximal.
	For all matrix $g$, we write $\kappa(g) = \log\|g\| = \max_{x \in E \setminus\{0\}}\log(\|gx\|/\|x\|) \in [-\infty, + \infty)$, for the log-norm of $g$.
	
	The motivation for the present paper is the proof of the following result, which extends the General Central Limit Theorem to our model of non-commuting random walk.

	\begin{Thintro}[Convergence to stable law with infinite variance]\label{th:loi-stable}
		Let $\nu$ be a probability measure over $\mathrm{SL}(E)$ and let $(\gamma_n)_{n \ge 0} \sim \nu^{\otimes\NN}$.
		Assume that $\Gamma_\nu$ is proximal, that $\int\kappa^2 d\nu = \EE(\kappa(\gamma_0)^2) = + \infty$ and that $\kappa_* \nu$ is in the domain of attraction of a non-degenerate (\ie not a Dirac) probability measure $\mathcal{L}$, in the sense that there exist two non-random sequences $(a_n)\in (0, + \infty)^\NN$ and $(b_n) \in \RR^\NN$ such that for all bounded and continuous function $f$, we have:
		\begin{equation}\label{dom-at}
			\lim_{n \to \infty} \EE\left(f\left(\frac{\sum_{k = 0}^{n-1} \kappa(\gamma_k) - b_n}{a_n}\right)\right) = \int_{-\infty}^{+\infty} f d\mathcal{L}.
		\end{equation}
		Then for all such sequences $(a_n)$ and $(b_n)$, there exists a constant $b \ge 0$ such that for all bounded and continuous function $f$, we have:
		\begin{equation}\label{weak-limit-alpha}
			\lim_{n \to \infty} \EE\left(f\left(\frac{\kappa(\overline\gamma_n) - b_n + n b}{a_n}\right)\right) = \int_{-\infty}^{+\infty} f d\mathcal{L}
		\end{equation}
	\end{Thintro}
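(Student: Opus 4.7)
The plan is to decompose
\[
\kappa(\overline\gamma_n) = S_n + \Delta_n, \qquad S_n := \sum_{k=0}^{n-1}\kappa(\gamma_k), \qquad \Delta_n := \kappa(\overline\gamma_n) - S_n \le 0,
\]
(the sign of $\Delta_n$ coming from submultiplicativity of the norm) and to observe that
\[
\frac{\kappa(\overline\gamma_n) - b_n + nb}{a_n} = \frac{S_n - b_n}{a_n} + \frac{\Delta_n + nb}{a_n}.
\]
By \eqref{dom-at} the first summand converges in distribution to $\mathcal{L}$, so by Slutsky's lemma it is enough to exhibit some $b \ge 0$ with
\begin{equation*}
\frac{\Delta_n + nb}{a_n} \xrightarrow[n \to \infty]{\PP} 0. \qquad (\star)
\end{equation*}

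The gain-of-moment theorem of the paper (its central contribution, based on the pivotal-times construction) provides exactly the right input to prove $(\star)$: it says that $\Delta_n$ behaves like a sum of $n$ i.i.d.\ random variables with finite moment of order $2q$, whenever $\kappa_*\nu$ admits a finite $q$-th moment. On the other hand, since $\kappa_*\nu$ lies in the domain of attraction of a non-degenerate law, its tail is regularly varying of some index $\alpha \in (0, 2]$; hence $a_n = n^{1/\alpha}\ell(n)$ for a slowly varying $\ell$ (with $\ell(n) \to \infty$ in the case $\alpha = 2$, since $\int \kappa^2 d\nu = +\infty$), and $\int |\kappa|^q d\nu < +\infty$ for every $q < \alpha$.

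Pick $q \in (\alpha/2, \alpha)$, which is non-empty and ensures $2q > \alpha$. Applying the Marcinkiewicz--Zygmund strong law (with exponent $p = 2q$) to the surrogate i.i.d.\ sum furnished by the gain-of-moment theorem yields a real number $b$ such that $\Delta_n + nb = o(n^{1/(2q)})$ almost surely, where $b$ is minus the mean of the effective i.i.d.\ increment when $2q > 1$ and $b = 0$ otherwise. Non-negativity of $b$ is forced by $\Delta_n \le 0$: when $2q > 1$ one has $-b = \lim_n \EE[\Delta_n]/n \le 0$, and when $2q \le 1$ the claim is trivial. Since $2q > \alpha$ and $\ell$ is slowly varying, $n^{1/(2q)} = o(n^{1/\alpha}) = o(a_n)$, giving $(\star)$ and hence the theorem.

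The only genuinely hard step is the gain-of-moment theorem itself, where the non-commutative hypotheses (strong irreducibility, proximality) and the geometry of pivotal times really come into play. Granted that black box, the proof of Theorem~\ref{th:loi-stable} reduces to the Slutsky and Marcinkiewicz--Zygmund arguments sketched above, plus an elementary case distinction on the tail index $\alpha$.
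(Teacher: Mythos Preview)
Your outline is correct and matches the paper's approach: decompose $\kappa(\overline\gamma_n)=S_n+\Delta_n$, use Slutsky, pick an exponent $q$ with $q/2<\alpha<q$ (the paper takes $q=\sqrt{2\alpha}$, equivalent to your choice), exploit $N\le d\,\kappa$ on $\SL(E)$ to get a finite $(q/2)$-moment for $N$, feed this into the gain-of-moment machinery to control $(\Delta_n+nb)/n^{1/q}$, and finish with $n^{1/q}=o(a_n)$.

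One point of imprecision: you describe the gain-of-moment theorem as literally ``furnishing a surrogate i.i.d.\ sum'' to which one applies Marcinkiewicz--Zygmund. That is only the heuristic from the abstract. What Theorem~\ref{th:Delta-unif} actually yields is a uniform tail bound on the two-block defect $\Delta\kappa(\gamma_{i,j},\gamma_{j,k})$; via Lemma~\ref{lem:ellq-almost} this makes the process $(\Delta\kappa(\widetilde\gamma_{m,n}))_{m\le n}$ \emph{almost additive in $\mathrm{L}^q$}, and the paper then proves a weak law of large numbers for such mixing almost-additive processes (Theorems~\ref{almost-wlln-no-drift} and~\ref{th:almost-wlln}, yielding $\EE(|\Delta_n+nb|^q)/n\to 0$). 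There is no literal i.i.d.\ sequence to which Marcinkiewicz--Zygmund applies directly; the dichotomy decomposition of Lemma~\ref{lem:dichotomy} plays the role of reducing to sums of independent blocks. The conclusion you want (convergence in probability of $(\Delta_n+nb)/a_n$) follows, but the mechanism is $\mathrm{L}^q$-almost-additivity plus a dyadic decomposition, not an almost-sure Marcinkiewicz--Zygmund estimate on a genuine i.i.d.\ sum.
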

	
	If one instead assumes that $\int\kappa^2 d\nu < + \infty$ and that $\Gamma_\nu$ is strongly irreducible and proximal, Benoist and Quint have shown in \cite[Theorem~1.1]{CLT16} that:
	\begin{equation}
		\exists \lambda > 0, \exists a \ge 0, \forall f, \; \lim_{n \to \infty} \EE\left(f\left(\frac{\kappa(\overline\gamma_n) - n \lambda}{\sqrt{n}}\right)\right) = \int_{-\infty}^{+\infty} \frac{f(ax) e^{-x^2/2}}{\sqrt{2\pi}} dx.
	\end{equation}
	Where $\lambda$ is the almost sure limit of $\kappa(\overline\gamma_n)/n$, which is well defined as soon as $\int \kappa d\nu < + \infty$ by the works Furstenberg and Kesten in \cite{porm}, and proven to be positive, when $\Gamma_\nu$ is strongly irreducible proximal, by the joint work of Guivarc'h, Raugi and Lepage \cite{GR1985}.
	
	We remind that \cite[Theorem~8.1, page~298]{Feller_68} states that the non-degenerate probability measure $\kappa_*\nu$, supported on $\RR_{\ge 0}$, is in the domain of attraction of a non-degenerate probability measure $\mathcal{L}$, in the sense of \eqref{dom-at} if and only if there exists a constant $0 < \alpha \le 2$ such that we have:
	\begin{equation}\label{slow-var-var}
		\forall r > 0,\; \lim_{t \to  +\infty} \frac{\int_{0}^{rt} u^2 d\kappa_*\nu(u)}{\int_{0}^{t} u^2 d\kappa_*\nu(u)} = r^{2 - \alpha}.
	\end{equation}
	In this case, by the works of Lévi and others, the law $\mathcal{L}$ is determined, up to an affine transformation, by $\alpha$-alone. 
	
	The proof of Theorem \ref{th:loi-stable} simply consists in showing that $\frac{\log\|\gamma_0 \cdots \gamma_n\|- \sum_{k = 0}^{n-1} \log\|\gamma_k\| + nb}{a_n}$ converges in probability to $0$.
	For that, we use Theorem \ref{th:Delta-unif}, which is the breakthrough of the present article.
	Theorem \ref{th:loi-stable} is a remarkable applications of the method developed in the present paper. 
	Yet the present paper is not really about the domain of attraction of stable distributions.
	Indeed, we only use the black-box result stated as Lemma \ref{lem:stable-law}.
	
	We denote by $\kappa$ the map $g \mapsto \log\|g\|$ (where $\|g\| = \max_{x \neq 0} \|gx\|/\|x\|$ is the operator norm of $g$) and denote by $N$ the map $g \mapsto \kappa(g) + \kappa(g^{-1})$. 
	Let us remind that that $\kappa$ is sub-additive (in the sense that $\kappa(gh) \le \kappa(g) + \kappa(h)$ for all pair $g,h$) and therefore $N$ also is.
	Moreover $N$ is non-negative and on $\mathrm{SL}(E)$, we have $0 \le \kappa \le N \le \kappa d$.
	
	Given $(\gamma_n)$ i.i.d., if we assume only that $\EE(\kappa(\gamma_0)) < +\infty$, we know by the work of Furstenberg and Kesten \cite{porm} that the sequence $\kappa(\overline\gamma_{n})/n$ converges almost surely to the constant $\lambda_1(\nu) := \lim_n \EE(\kappa(\overline\gamma_n))/n \in [-\infty, +\infty)$, called first Lyapunov exponent of $\nu$.
	In \cite[Theorem 1.1]{CLT16}, Benoist and Quint show that if $\kappa_* \nu$ and $N_* \nu$ both\footnote{On $\mathrm{SL}_d$, we have $0 \le \kappa \le N \le (d-1) \kappa$ so the two moment conditions are equivalent. That being said, in the setting of \cite[Theorem 1.1]{CLT16}, the random walk is on $\mathrm{GL}(E)$.} have a finite moment of order $2$ and that $\nu$ is strongly irreducible and proximal, then $\kappa_*\nu^{*n}$ satisfies a Central Limit Theorem.
	
	Using the tools developed for the proof of Theorem \ref{th:loi-stable} we can (quite unexpectedly) lower the moment assumption on $N$ to a first moment assumption.
	
	\begin{Thintro}[Central limit Theorem with first moment assumption on the inverse]\label{th:clt}
		Let $\nu$ be a strongly irreducible and proximal probability measure over $\mathrm{GL}(E)$ and let $(\gamma_n)_{n \ge 0} \sim \nu^{\otimes\NN}$.
		Assume that $\EE(\kappa(\gamma_0)^2) < +\infty$ and $\EE(N(\gamma_0)) < + \infty$. 
		Then the law of $\frac{\kappa(\overline{\gamma}_n) - n \lambda_1(\nu)}{\sqrt{n}}$ converges to a centred Gaussian law.
		It means that there exists a constant $a \ge 0$, such that for all bounded continuous function $f$, we have:
		\begin{equation}\label{weak-clt-1}
			\EE\left(f\left(\frac{\kappa(\overline{\gamma}_n) - n \lambda_1(\nu)}{\sqrt{n}}\right)\right) \to \int_{-\infty}^{+ \infty}  \frac{f(ax) e^{-x^2/2}}{\sqrt{2\pi}} dx.
		\end{equation}
		Moreover, we have $\lim_n\mathrm{Var}(\kappa(\overline{\gamma}_n)) / n = a^2$.
	\end{Thintro}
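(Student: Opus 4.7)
The strategy is to decompose
\[
\kappa(\overline{\gamma}_n) \;=\; S_n + \Delta_n, \qquad S_n := \sum_{k=0}^{n-1}\kappa(\gamma_k), \quad \Delta_n := \kappa(\overline{\gamma}_n) - S_n,
\]
where $S_n$ is a sum of i.i.d.\ copies of $\kappa(\gamma_0)$ and $\Delta_n \le 0$ is the defect of sub-additivity. Since $\EE(\kappa(\gamma_0)^2) < +\infty$, the constant $b := \EE\kappa(\gamma_0) - \lambda_1(\nu)$ is finite and non-negative, and the classical Lindeberg--L\'evy CLT gives convergence of $(S_n - n\,\EE\kappa(\gamma_0))/\sqrt{n}$ to $\mathcal{N}(0, \mathrm{Var}(\kappa(\gamma_0)))$. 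As $n\lambda_1(\nu) = n\,\EE\kappa(\gamma_0) - nb$, proving \eqref{weak-clt-1} reduces to controlling the joint behaviour of $(S_n - n\,\EE\kappa(\gamma_0), \Delta_n + nb)/\sqrt{n}$.

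\textbf{Applying the gain of moment.} I would invoke Theorem \ref{th:Delta-unif} with $q = 2$: the hypotheses $\EE(\kappa^2) < \infty$ and $\EE(N) < \infty$, together with strong irreducibility and proximality, are precisely what the pivotal-times construction requires in order to approximate $\Delta_n + nb$ by a sum of i.i.d.\ random variables with a finite moment of order $2q = 4$. This already yields the marginal Gaussian limit for $(\Delta_n + nb)/\sqrt{n}$. Moreover the same pivotal decomposition naturally expresses $S_n$ as a sum over the same blocks, so one obtains \emph{joint} Gaussian convergence of the pair $((S_n - n\,\EE\kappa(\gamma_0))/\sqrt{n}, (\Delta_n + nb)/\sqrt{n})$; summing the two coordinates delivers \eqref{weak-clt-1}, with $a^2$ equal to the variance of the limiting summed Gaussian. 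The moment-of-order-$4$ bound also makes $((\kappa(\overline{\gamma}_n) - n\lambda_1(\nu))/\sqrt{n})^2$ uniformly integrable, which combined with distributional convergence upgrades to $L^2$ convergence and yields $\mathrm{Var}(\kappa(\overline{\gamma}_n))/n \to a^2$.

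\textbf{The main obstacle.} The marginal CLT for $S_n$ is elementary, and once Theorem \ref{th:Delta-unif} is available it essentially delivers the marginal CLT for $\Delta_n$; the genuinely delicate issue is the \emph{joint} convergence of $(S_n, \Delta_n)$, whose components are strongly correlated. One must verify that the pivotal blocks constructed for Theorem \ref{th:Delta-unif} are compatible with the trivial additive structure of $S_n$, so that both quantities are represented by block sums that are independent across blocks and have the required second moments. A related technical point is to check that the first-moment hypothesis on $N$\,---\,as opposed to the usual second-moment hypothesis used by Benoist--Quint\,---\,is sufficient to keep the contribution of the matrices lying between consecutive pivotal times negligible at the scale $\sqrt{n}$. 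Since this is exactly the content of the ``gain of moment'' in Theorem \ref{th:Delta-unif}, one expects it to follow directly from that black-box result, with all the real work of the proof having already been done there.
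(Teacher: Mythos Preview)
Your decomposition $\kappa(\overline\gamma_n)=S_n+\Delta_n$ is the right starting point, but there is a numerical slip and, more importantly, the paper's route avoids the joint-convergence obstacle you identify rather than confronting it.

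The slip: Theorem~\ref{th:Delta-unif} carries no parameter~$q$; what you are really invoking is its corollary Lemma~\ref{lem:ellq-almost}, which says that $\EE(N(\gamma_0)^{q/2})<\infty$ forces the two-term defects $\Delta\kappa(\gamma_{l,m},\gamma_{m,n})$ to lie uniformly in~$L^q$. With only $\EE(N(\gamma_0))<\infty$ this gives $q=2$, i.e.\ a uniform \emph{second} moment on the defects, not a fourth. The hypothesis $\EE(\kappa(\gamma_0)^2)<\infty$ plays no role in the gain of moment; it is only there so that the single-step term $S_{0,1}=\kappa(\gamma_0)$ is itself in~$L^2$. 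With only a second moment on the building blocks, your uniform-integrability argument for $\mathrm{Var}(\kappa(\overline\gamma_n))/n\to a^2$ does not go through as written.

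The paper sidesteps both this and the joint-convergence issue by never splitting into $(S_n,\Delta_n)$. It treats the whole family $S_{m,n}:=\kappa(\gamma_{m,n})$ as a single mixing process (Definition~\ref{def:mix}) that is $C$-almost additive in~$L^2$ by Lemma~\ref{lem:ellq-almost}, and then applies an abstract CLT for such processes, Theorem~\ref{th:almost-clt}. That theorem is proved via the dyadic dichotomy formula (Lemma~\ref{lem:dichotomy}), which writes $\overline S_n$ as $\sum_k S_k$ plus, for each scale~$2^j$, an i.i.d.\ sum of two-term defects $\Delta S(2^jk,2^j(k+\tfrac12),2^j(k+1))$ with uniformly bounded $L^2$ norm; one applies the classical CLT scale by scale and lets $j\to\infty$. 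The output is convergence in the quadratic Wasserstein distance, which already packages both the weak convergence~\eqref{weak-clt-1} and the convergence of $\mathrm{Var}(\kappa(\overline\gamma_n))/n$, with no separate uniform-integrability step. So the pivotal-times construction is used only upstream, to secure the $L^2$-almost-additivity; it does not reappear as a block decomposition in the CLT itself.
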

	
	The fact that $\lim_n\mathrm{Var}(\kappa(\overline{\gamma}_n)) / n = a^2$ in Theorem \ref{th:clt} is not a mere consequence of \eqref{weak-clt-1}, the combination of the two is equivalent to saying that \eqref{weak-clt-1} hold for all function $f$ that is sub-quadratic and continuous, in the sense that $(f(x)/(x^2+1))_{x \in \RR}$ is bounded and continuous.
	The continuity condition on $f$ can also be relaxed to a piecewise continuity condition using the fact that the Gaussian distribution is absolutely continuous (with respect to the Lebesgue measure) but the is not really the kind of problems we are interested in in the present article.
	Another equivalent reformulation of the conclusion of Theorem \ref{th:clt} is that the quadratic Wasserstein distance between the law of $\frac{\kappa(\overline{\gamma}_n) - n \lambda_1(\nu)}{\sqrt{n}}$ and the centred Gaussian distribution of variance $a^2$ has limit $0$ \ie there exists a coupling of $(\gamma_n)_n \sim \nu^{\otimes\NN}$ with a sequence $(y_n)_n$ of centred Gaussian random variables of variance $a^2$ (not independent) and such that:
	\begin{equation}\label{wasserstein-clt}
		\EE\left(\left|\frac{\kappa(\overline{\gamma}_n) - n \lambda_1(\nu)}{\sqrt{n}} - y_n\right|^2\right) \to 0.
	\end{equation}
	
	To put emphasis on the non obviousness of the fact that $\lim_n\mathrm{Var}(\kappa(\overline{\gamma}_n)) / n = a^2$, let us look at the central limit Theorem for the coefficients.
	In \cite{moi}, we have seen that for all $v \in E\setminus\{0\}$ and all $f \in E^* \setminus\{0\}$, the distribution of $\frac{\kappa(\overline{\gamma}_n) - \log|f\overline\gamma_n v|}{\sqrt{n}}$ converges to the Dirac at $0$.
	So in the setting of Theorem \ref{th:clt}, the distribution of $\frac{\log|f\overline\gamma_n v| - n \lambda_1(\nu)}{\sqrt{n}}$ converges in distribution to the centred Gaussian distribution of variance $a$.
	It means that for all bounded and continuous function $f$, we have:
	\begin{equation}\label{weak-clt-coef}
		\EE\left(f\left(\frac{\log |f\overline\gamma_n v| - n \lambda_1(\nu)}{\sqrt{n}}\right)\right) \to \int_{-\infty}^{+ \infty}  \frac{f(ax) e^{-x^2/2}}{\sqrt{2\pi}} dx.
	\end{equation}
	Since $\PP(|f\overline\gamma_n v| = 0)$ is not necessarily zero, we use the convention $f(-\infty) = 0$, for the left member of $\eqref{weak-clt-coef}$ to be well defined. 
	Even when $\PP(|f\overline\gamma_n v| = 0)$ for all $n$, the sequence $\mathrm{Var}(\log|f\overline\gamma_n v|) / n$ may be stationary to $+ \infty$.
	
	To illustrate this, let $(x^k_n) \in \{0,1\}^{\NN^2}$ be independent and uniformly distributed, let $E = \RR^2$ and for all $n \ge 0$, let $\gamma_n = R_{\theta_n} \mathrm{diag}(2,1)$, where $\theta_n = \sum_{k =0}^{\infty} 2^{-k!} x^k_n$ and $R_\theta$ denotes the rotation of angle $\theta$. 
	If we take $f = \begin{pmatrix}0& 1\end{pmatrix}$ and $v = \binom{1}{0}$ then for all $n, k$, we have $\PP(|f \overline{\gamma}_n v| \le n2^{-k!}) \ge \PP(\forall i < n, j < k, x^j_i = 0) = 2^{- n k}$ so if we sum over all values of $k$, we get $\EE(\log_-|f \overline{\gamma}_n v|) \ge \sum_{k = 0}^{+\infty} 2^{-n k}k!\log(2) - \log(n) = + \infty$, even though $|f \overline{\gamma}_n v| \neq 0$.
	
	In Theorem \ref{th:loi-stable} we will moreover show that \eqref{weak-limit-alpha} holds for all function $f$ such that $f(x)/(|x|^q + 1)$ is bounded and continuous for all $q < \alpha$, where $\alpha$ is the stability parameter of $\mathcal{L}$, \ie the only constant such that \eqref{slow-var-var} is satisfied for $\mu = \kappa_*\nu$. 
	Again, this is equivalent to saying that there exists a sequence of random variables $(y_n)$ that all have law $\mathcal{L}$ and such that for all $0 < q < \alpha$, we have:
	\begin{equation}\label{wasserstein-loi-stable}
		\EE\left(\left|\frac{\kappa(\overline{\gamma}_n) - b_n + n b}{a_n} - y_n\right|^q\right) \to 0.
	\end{equation}

	\subsection{Probabilistic estimate for the norm-cancellation function}

	Let $(\gamma_n)_{n \ge 0} \sim \nu^{\otimes\NN}$ be a random sequence\footnote{The notation $(\gamma_n)_{n \ge 0} \sim \nu^{\otimes\NN}$, means that $(\gamma_n)_{n \ge 0}$ is a measurable map from a standard probability space $(\Omega, \PP)$ to $\mathrm{GL}(E)^\NN$ and for all family of bounded measurable functions $(\psi_i)_{i \in I}$ with $I \subset \NN$ finite, we have $\EE(\prod_{i \in I}\psi_i(\gamma_i)) = \prod_{i \in I}\int\psi_i d\nu$, where $\EE$ denotes the expectation form associated to $\PP$. In this case that the random variables $(\gamma_n)_{n \ge 0}$ are independent and identically distributed (abbreviated to i.i.d.) of law $\nu$.} and write $\overline{\gamma}_n = \gamma_0 \cdots \gamma_{n-1}$ for all $n \ge 0$.
	 
	We say that the sequence $(\overline\gamma_n)_{n \ge 0}$ is a right\footnote{Some authors prefer to study the left random walk $(\gamma_{n-1}\cdots\gamma_0)_{n \ge 0}$ as it is compatible with the left group action $\mathrm{GL}(E)\curvearrowright E$. Since the methods developed in the present work are not based on the study of this group action, we make the more practical choice of considering the right random walk.} random walk with step of law $\nu$, that way, we have $\overline{\gamma}_n \sim \nu{*n}$ for all $n \ge 0$ (with the convention $\overline\gamma_0 = \mathrm{Id}(E)$). 
	We say that a random walk is strongly irreducible and proximal if its step law is.

	The proof of Theorems \ref{th:loi-stable} and \ref{th:clt} do not rely on an Ergodic theoretic version of Feller's results but on probabilistic estimates for the quantity:
	\begin{equation*}
		\Delta\kappa(\gamma_0, \dots, \gamma_{n-1}) := \kappa(\gamma_0 \cdots \gamma_{n-1}) - \kappa(\gamma_0) - \cdots - \kappa(\gamma_{n-1}).
	\end{equation*}
	By sub-additivity of $\kappa$, we know that $\Delta\kappa(\widetilde\gamma_{0,n})$ is a non-positive random variable for all $n$ so it has a well defined expectation in $[0, + \infty]$.
	We say that $\Delta\kappa(\gamma_0, \cdots, \gamma_{n-1})$ is the total norm-cancellation of the word $(\gamma_0, \cdots, \gamma_{n-1})$.
	We write $\delta(\nu) = \lim_n \frac{\EE(\Delta_n)}{n} \in [0, + \infty]$, the limit exists by sub-additivity of $\kappa$ (indeed, $\Delta_{m+n} \le \Delta_{m} + \Delta_{n}$ for all $m,n$) we say that $\delta(\nu)$ is the average norm-cancellation of $\nu$, it may be $-\infty$.
	Let us note that when $\EE(\kappa(\gamma_0)) < + \infty$, we have $\lambda_1(\nu) = \EE(\kappa(\gamma_0)) + \delta(\nu)$.
	Let us introduce a more general notation.
	
	\begin{Def}
		Let $\Gamma$ be a semi-group. 
		We write $\widetilde\Gamma = \bigsqcup_{n \ge 0} \Gamma^n$ for the semi-group of words on the alphabet $\Gamma$, endowed with the concatenation product $\odot$.
		Let $(V, +)$ be an Abelian group and let $\kappa : \Gamma \to V$, we write $\Delta\kappa : \widetilde\Gamma \to V : (g_0, \dots, g_{l-1}) \mapsto \kappa(g_0\cdots g_{l-1}) - \sum_{k = 0}^{l-1} \kappa(g_k)$.
	\end{Def}
	
	Given a sequence $(\gamma_n)_{n \ge 0}$ in a semi-group, and given two integers $0 \le m \le n$, we write $\widetilde\gamma_{m,n}$ as short for the word $(\gamma_m, \cdots, \gamma_{n-1})$ of length $n-m$ and write $\gamma_{m,n}$ as short for the product $\gamma_m \cdots \gamma_{n-1}$. 
	That way $\gamma_n = \gamma_{n, n+1}$ for all $n$.
	
	\begin{Prop}
		Let $\nu$ be a probability distribution over a semi-group $\Gamma$, let $(\gamma_n) \sim \nu^{\otimes\NN}$ and let $\kappa : \Gamma \to \RR$ be sub-additive.
		There exist a constant $\delta(\nu) \in [- \infty, 0]$, such that we have almost surely:
		\begin{equation}\label{delta-nu}
			\lim_{n \to  + \infty} \frac{\Delta{\kappa}(\widetilde{\gamma}_{0,n})}{n} = \lim_{n \to  + \infty} \frac{\EE(\Delta{\kappa}(\widetilde{\gamma}_{0,n}))}{n} = \delta(\nu).
		\end{equation}
	\end{Prop}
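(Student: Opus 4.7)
The plan is to recognize the family $X_{m,n} := \Delta\kappa(\widetilde\gamma_{m,n}) = \kappa(\gamma_{m,n}) - \sum_{k=m}^{n-1}\kappa(\gamma_k)$ (for $0 \le m \le n$) as a stationary ergodic subadditive process and invoke Kingman's subadditive ergodic theorem. There are three hypotheses to check: (i) subadditivity in the form $X_{m,n} \le X_{m,k} + X_{k,n}$ for every $m \le k \le n$, (ii) stationarity of the joint law under the index shift $(m,n) \mapsto (m+1, n+1)$, and (iii) integrability of the positive part $X_{m,n}^+$. For (i), I expand $\gamma_{m,n} = \gamma_{m,k}\gamma_{k,n}$, apply sub-additivity of $\kappa$ to bound $\kappa(\gamma_{m,n}) \le \kappa(\gamma_{m,k}) + \kappa(\gamma_{k,n})$, and split $\sum_{i=m}^{n-1}\kappa(\gamma_i) = \sum_{i=m}^{k-1}\kappa(\gamma_i) + \sum_{i=k}^{n-1}\kappa(\gamma_i)$. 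Point (ii) is immediate from the i.i.d. hypothesis on $(\gamma_n)$, and (iii) is trivial because sub-additivity of $\kappa$ already gives $X_{m,n}\le 0$, hence $X_{m,n}^+ = 0$.

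With these in place, convergence of expectations reduces to Fekete's lemma applied to $a_n := \EE(X_{0,n})$, which is subadditive by taking expectations in (i) combined with (ii), and takes values in $[-\infty, 0]$. This yields $\EE(X_{0,n})/n \to \inf_n \EE(X_{0,n})/n =: \delta(\nu) \in [-\infty, 0]$. The almost-sure convergence $X_{0,n}/n \to \delta(\nu)$ then follows from Kingman's theorem: the shift acting on the i.i.d. sequence $(\gamma_n)$ is ergodic (indeed mixing, by Kolmogorov's $0$-$1$ law), so the almost-sure limit is deterministic and must coincide with the limit of the expectations when the latter is finite.

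The only subtlety I would flag is the case $\delta(\nu) = -\infty$, which is explicitly permitted by the statement. The standard $L^1$ form of Kingman's theorem does not directly apply here, but the extended form (Kingman's original statement, or Steele's argument) gives almost-sure convergence to the deterministic limit whether it is finite or $-\infty$, provided $\EE(X_{0,n}^+) < \infty$ for all $n$ — a condition that is vacuous by (iii). Crucially, no integrability assumption on $\kappa$ itself enters, which is consistent with the complete generality of the proposition.
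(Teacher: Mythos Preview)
Your proof is correct and follows essentially the same approach as the paper: verify that $\Delta\kappa(\widetilde\gamma_{m,n})$ is a non-positive subadditive process (using sub-additivity of $\kappa$), then apply Kingman's subadditive ergodic theorem to the shift-ergodic measure $\nu^{\otimes\NN}$. Your write-up is in fact more careful than the paper's, as you explicitly address the $\delta(\nu)=-\infty$ case and the integrability of the positive part.
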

	
	\begin{proof}
		For all $0 \le n \le m$, we have:
		\begin{equation}\notag
			\Delta{\kappa}(\widetilde{\gamma}_{0, m}) = \Delta{\kappa}(\widetilde{\gamma}_{0,n}) + \Delta{\kappa}(\widetilde{\gamma}_{n, m}) + \Delta\kappa(\gamma_{0,n}, \gamma_{n, m}).
		\end{equation}
		Moreover, $\Delta\kappa \le 0$ by sub-additivity of $\kappa$ so $\Delta{\kappa}(\widetilde{\gamma}_{0, m}) \le \Delta{\kappa}(\widetilde{\gamma}_{0,n}) + \Delta{\kappa}(\widetilde{\gamma}_{n, m})$ for all $n \le m$.
		Therefore, we may apply Kingman's ergodic Theorem \cite{K68} to the sequence of non-positive functions $f_n : \mathrm{GL}(E)^\NN \to \RR_{\le 0}; (\gamma_n)_{n \ge 0} \mapsto \Delta{\kappa}(\widetilde{\gamma}_{0,n})$ and to the shift-ergodic measure $\nu^{\otimes\NN}$.
	\end{proof}
	
	Note that in fact, we may define a quantity $\delta(\mu) \in [- \infty, 0]$ for all shift-ergodic measure $\mu$ on $\mathrm{GL}(E)^\NN$, \ie such that given $(\gamma_n)_{n \ge 0}\sim \mu$, we have $(\gamma_{n + 1})_{n \ge 0} \sim \mu$ and all asymptotic events have probability $0$ or $1$.
	In the present paper, we are interested in controlling the distribution of $\Delta{\kappa}(\widetilde{\gamma}_{0,n})$ for all $n$.
	For that we use the following key result.
	
	\begin{Th}[Probabilistic bound on the norm-cancellation]\label{th:Delta-unif}
		Let $\nu$ be a strongly irreducible and proximal probability measure over $\mathrm{GL}(E)$ and let $(\gamma_n) \sim \nu^{\otimes\NN}$.
		There exist constants $C, \beta > 0$ such that for all $0 \le i < j < k$ and for all $t > 0$, we have:
		\begin{equation}\label{Delta-unif}
			\PP(|\Delta\kappa(\gamma_{i,j}, \gamma_{j,k})| > t) \le \sum_{k = 0}^{\infty} C e^{-\beta k} \PP(N(\gamma_0) > t/k)^2. 
		\end{equation}
	\end{Th}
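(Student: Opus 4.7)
The plan is to combine the pivotal times machinery developed in the earlier sections with a deterministic min-bound on $\Delta\kappa$. The key elementary fact is that for any $g, h \in \GL(E)$,
\[
|\Delta\kappa(g, h)| \le \min\bigl(N(g), N(h)\bigr),
\]
since $\kappa(h) = \kappa(g^{-1}gh) \le \kappa(g^{-1}) + \kappa(gh)$ yields $\kappa(g) + \kappa(h) - \kappa(gh) \le N(g)$, and symmetrically $\le N(h)$. This min structure is exactly what will produce the square in the tail: when $g$ and $h$ are independent, $\PP(|\Delta\kappa(g,h)| > t) \le \PP(N(g) > t)\,\PP(N(h) > t)$.

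The main task is to reduce $|\Delta\kappa(\gamma_{i,j}, \gamma_{j,k})|$ to the same quantity evaluated on two short interfacial windows, up to an $O(1)$ additive error. Let $T_L \le j$ be the last pivotal time of the walk that is at or before $j$, and let $T_R \ge j$ be the first pivotal time at or after $j$, in the sense that at a pivotal time $T$ the Cartan projection is essentially additive: $\kappa(\gamma_{a, T} \cdot \gamma_{T, b}) = \kappa(\gamma_{a, T}) + \kappa(\gamma_{T, b}) + O(1)$ for all admissible $a, b$. Writing $K_L = j - T_L$ and $K_R = T_R - j$, two applications of this additivity (at $T_L$ for $\kappa(\gamma_{i,j})$ versus $\kappa(\gamma_{i,k})$, and at $T_R$ for $\kappa(\gamma_{T_L, k})$ versus $\kappa(\gamma_{j,k})$) make the outer ``frozen'' contributions cancel in $\Delta\kappa$, yielding
\[
|\Delta\kappa(\gamma_{i,j}, \gamma_{j,k})| \le |\Delta\kappa(\gamma_{T_L, j}, \gamma_{j, T_R})| + C_0 \le \min\bigl(N(\gamma_{T_L, j}),\, N(\gamma_{j, T_R})\bigr) + C_0.
\]

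For the tail, the pivotal gap estimate (the exponential-tail output of the pivotal construction under strong irreducibility and proximality) gives $\PP(K_L + K_R \ge m) \le C_1 e^{-\beta_0 m}$, uniformly in $i < j < k$. Conditional on $(K_L, K_R) = (k_L, k_R)$, the windows $\gamma_{T_L, j}$ and $\gamma_{j, T_R}$ lie in disjoint blocks of the i.i.d.\ sequence, so the min-bound together with independence and sub-additivity of $N$ yield
\[
\PP\bigl(|\Delta\kappa| > t + C_0 \,\big|\, K_L = k_L,\, K_R = k_R\bigr) \le k_L k_R\, \PP\bigl(N(\gamma_0) > t/k_L\bigr)\, \PP\bigl(N(\gamma_0) > t/k_R\bigr).
\]
Bounding $k_L, k_R \le m = k_L + k_R$ turns each factor into $\PP(N(\gamma_0) > t/m)$, and summing over pairs $(k_L, k_R)$ with $k_L + k_R = m$ absorbs the polynomial factor $k_L k_R \le m^2$ into a slightly smaller exponential rate $\beta < \beta_0$, producing the claimed bound $\sum_m C e^{-\beta m} \PP(N(\gamma_0) > t/m)^2$.

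The main obstacle will be handling the conditioning on the pivotal event cleanly: the conditional laws of $\gamma_{T_L, j}$ and $\gamma_{j, T_R}$ given $(K_L, K_R)$ are not literally products of $\nu$, and independence between the two windows is not automatic since the pivotal event at $j$ a priori couples them. I expect this to be dealt with by the paper's combinatorial bookkeeping of admissible pivotal patterns — possibly by thickening the window by $O(1)$ matrices so that the inner and outer blocks genuinely decouple, or by a Radon--Nikodym comparison to the unconditional i.i.d.\ law with a uniform multiplicative constant absorbed into $C$.
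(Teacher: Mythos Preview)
Your reduction in step~2 does not go through with the pivotal property as stated in Theorem~\ref{th:pivot}. Point~\eqref{pivot:almost-ad} there only guarantees $|\Delta\kappa(\gamma^p_{a,b},\gamma^p_{b,c})|\le C$ when \emph{all three} indices are pivotal; it does \emph{not} give $|\Delta\kappa(\gamma_{a,T},\gamma_{T,b})|=O(1)$ for a pivotal $T$ and arbitrary $a<T<b$. Point~\eqref{pivot:align} allows arbitrary $g,h$ at the ends but only under an alignment hypothesis that is not automatic (point~\eqref{pivot:schottky} gives it only with probability $\ge 3/4$). So the ``frozen outer contributions cancel'' step assumes a property the pivotal times simply do not have here.

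The paper's route is genuinely different. Instead of cancelling, it uses the monotonicity of $\Delta\kappa$ under word subordination (Proposition~\ref{prop:subord}): the two-letter word $(\gamma_{i,j},\gamma_{j,k})$ is subordinated to a longer word obtained by expanding each of $i,j,k$ to its neighbouring pivotal times $\lfloor n\rfloor_p,\lceil n\rceil_p$. This yields
\[
|\Delta\kappa(\gamma_{i,j},\gamma_{j,k})|\le 4C+\sum_{n\in\{i,j,k\}}|\Delta\kappa(\widetilde\gamma_{\lfloor n\rfloor_p,\lceil n\rceil_p})|,
\]
where the $4C$ comes from point~\eqref{pivot:almost-ad} applied between genuine pivotal indices. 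All \emph{three} boundary windows appear, not only the one at $j$.

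The source of the square is also different from what you propose. For a single window $\widetilde\gamma_{a,b}$ of length $p=b-a$, Lemma~\ref{lem:Delta} bounds $|\Delta\kappa(\widetilde\gamma_{a,b})|$ by $R=\sum_{a\le m<b}N(\gamma_m)-\max_{a\le m<b}N(\gamma_m)$; if $R>t$ with $t>B(p-1)$ then at least \emph{two} indices must satisfy $N(\gamma_m)>t/(p-1)$. Those matrices are outside $K$, hence equal to $g_m$, and the $(g_m)$ are i.i.d.\ and independent of $(p_n)$ by point~\eqref{pivot:indep}. That is what produces $\PP(N(g_0)>t/k)^2$: two large letters inside one window, not independence between a left and a right window. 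Your proposed mechanism would additionally face exactly the coupling obstacle you flag at the end, whereas the paper's mechanism only needs the clean independence of $(g_n)$ from $(p_n)$ already delivered by Theorem~\ref{th:pivot}.
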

	
	The great improvement from the results we have seen in \cite{moi} is the fact that the probability, on the right hand member of \eqref{Delta-unif}, is squared. 
	A direct consequence of \eqref{Delta-unif} is that for all $q > 0$, we have:
	\begin{equation*}
		\EE(|\Delta\kappa(\gamma_{i,j}, \gamma_{j,k})|^{2q}) \le 2\EE(N(\gamma_0)^q)^2\sum_{k = 0}^{\infty} C e^{-\beta k} k^{2q}. 
	\end{equation*}
	This follows from an integration by parts detailed in the proof of Lemma \ref{lem:ellq-almost}.
	Moreover, by construction, the constant $C$ and $\beta$ are not sensitive to the tail of $\nu$ in the sense that a fitting pair $C, \beta$ is given by a function of $\nu$ that is continuous for the weak-$*$ topology on the open set of strongly irreducible and proximal measures.
	More details on this observation are given in Section \ref{section:pivot}
	
	In the case of $N(\gamma_0)$ being bounded, \eqref{Delta-unif} is equivalent to saying that there exist constants $C, \beta > 0$ such that $\PP(|\Delta\kappa(\gamma_{i,j}, \gamma_{j,k})| > t) \le Ce^{-\beta t}$.
	This is a well known result proven by Guivarc'h and Lepage under the assumption that $N(\gamma_0)$ has a finite exponential moment so we shall assume that $N(\gamma_0)$ in unbounded in the proof to avoid having to treat the bounded case separately. 
	Note that if we only assume that $N(\gamma_0)$ has a finite exponential moment, then \eqref{Delta-unif} does not imply that $|\Delta\kappa(\gamma_{i,j}, \gamma_{j,k})|$ has a uniformly bounded exponential moment so formula \eqref{Delta-unif} shall only be used when the tail of $N_* \nu$ decays slower than any exponential.
	
	Theorem \ref{th:Delta-unif} is derived from the following extension of \cite[Theorem~1.6]{moi}.
	The only difference being that point \eqref{pivot:indep} is given as a qualitative global independence result, while point (7) in \cite[Theorem~1.6]{moi} is a quantitative result on the conditional distribution of the $\gamma_k$'s.
	Indeed, given two matrices $g$ and $h$, and $\eps > 0$, the notation $g \AA^\eps h$ in \cite{moi} means that $|\Delta\kappa(g,h)| \le |\log(\eps)|$ with the present notations.
	
	Given a sequence  of integers $(p_n)_{n \ge 0}$, we write $\overline{p}_n = \sum_{k < n} p_n$ for all $n$ and given a sequence $(\gamma_n)$ in a semi-group, we write $(\gamma^p_n)_{n \ge 0} \in \Gamma^\NN$ for the sequence defined by $\gamma^p_{n} = \gamma_{\overline{p}_n, \overline{p}_{n+1}}$ and $(\widetilde\gamma^p_n)_{n \ge 0} \in\widetilde\Gamma^\NN$ for the sequence characterized by the fact that $\widetilde\gamma^p_{n} = \widetilde\gamma_{\overline{p}_{n}, \overline{p}_{n+1}}$ for all $n$.
	That way, we have $\widetilde\gamma^p_{i,j} = \widetilde\gamma_{\overline{p}_{i}, \overline{p}_{j}}$ and $\gamma^p_{i,j} = \gamma_{\overline{p}_{i}, \overline{p}_{j}}$ for all $i <j$.

	\begin{Th}[Pivoting technique]\label{th:pivot}
		Let $\nu$ be a strongly irreducible and proximal probability measure over $\mathrm{GL}(E)$.
		There exist a compact $K \subset \mathrm{GL}(E)$, a constant $C \ge 0$, an integer $m > 0$ and, defined on the same probability space, a sequence $(\gamma_n) \sim \nu^{\otimes\NN}$, an i.i.d. sequence $(g_n)_{n \ge 0} \in \mathrm{GL}(E)^{\NN}$ and a sequence of integers $(p_n)_{n \ge 0}$ such that the following points hold:
		\begin{enumerate}
			\item For all $n$, we have $\gamma_n = g_n$ or $\gamma_n \in K$.\label{pivot:gammainK}
			\item For all $k$, we have $\widetilde\gamma^p_{2k + 1} \in K^m$ almost surely and the conditional distribution of $\left(\widetilde\gamma^p_{n + 2k + 2}\right)_{n \ge 0}$ with respect to $\left(\widetilde\gamma^p_{n}\right)_{n \le 2k}$ is almost surely constant and does not depend on $k$ either.\label{pivot:markov}
			\item all the $p_k$'s have a finite exponential moment\label{pivot:exp}
			\item The data of $(p_n)_{n \ge 0}$ is independent of $(g_n)_{n \ge 0}$.\label{pivot:indep}
			\item For all $0 \le i < j < k$, we have almost surely:\label{pivot:almost-ad}
			\begin{equation*}
				|\Delta\kappa(\gamma^p_{i,j}, \gamma^p_{j,k})| \le C.
			\end{equation*}
			\item For all integer $k$ and for all $h \in \mathrm{GL}(E)$ that is given by a function of $\left(\widetilde\gamma^p_{n}\right)_{n \neq 2k + 1}$ and $(g_n)_n$, we have:\label{pivot:schottky}
			\begin{gather*}
				\PP\left(\forall j \le 2k + 1, |\Delta\kappa(\gamma^p_{2k + 1}, h)| \le C \right) \ge 3/4. \\
				\PP\left(\forall j \ge 2k + 1, |\Delta\kappa(h, \gamma^p_{2k + 1})| \le C \right) \ge 3/4.
			\end{gather*}
			\item For all $0 < i < j < k$ such that $i$ and $k-1$ are odd and for all $g,h \in \mathrm{GL}(E)$ such that $|\Delta\kappa(g,\gamma^p_{i})| \le C$ and $|\Delta\kappa(\gamma^p_{k-1}, h)| \le C$, we have:\label{pivot:align}
			\begin{equation*}
				|\Delta\kappa(g\gamma^p_{i,j}, \gamma^p_{j,k}h)| \le C.
			\end{equation*}
		\end{enumerate}
	\end{Th}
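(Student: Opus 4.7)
The theorem is an enhancement of \cite[Theorem~1.6]{moi}: the genuinely new ingredient is the global independence statement~(4), which replaces the weaker conditional-distribution statement of \cite{moi}. My plan is to reuse the pivotal Schottky construction of \cite{moi} and to re-couple the randomness so that the block-length sequence $(p_n)$ is manifestly driven by auxiliary Bernoullis independent of $(g_n)$; the remaining properties will then follow as in \cite{moi}.

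The construction begins with a Schottky step. Strong irreducibility and proximality of $\Gamma_\nu$ produce, for some integer $m$, a compact set $K \subset \GL(E)$ and finitely many pivotal $m$-words $f_1, \dots, f_r \in K^m$ in the support of $\nu^{*m}$, together with an open neighbourhood $F \subset \GL(E)^m$ of $\{f_1, \dots, f_r\}$, such that: any two words $u, u' \in F$ are $C$-aligned across any exterior word (giving~(5) and~(7)), and for any exterior $h$ at most $r/4$ of the strata of $F$ fail to $C$-align with $h$ on each side (giving~(6)). By construction $\eps := \nu^{\otimes m}(F) > 0$, which yields a Doeblin decomposition $\nu^{\otimes m} = \eps\, \mu_F + (1-\eps)\rho$ where $\mu_F = \nu^{\otimes m}|_F / \eps$ and $\rho = \nu^{\otimes m}|_{F^c} / (1-\eps)$.

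The probability space is then assembled from three mutually independent auxiliary sources: an i.i.d.\ Bernoulli sequence $(B_k)_k$ of parameter $\eps$, an i.i.d.\ pivot sequence $(U_k)_k$ of law $\mu_F$, and an i.i.d.\ filler sequence $(V_k)_k$ of law $\rho$. Group $\NN$ into consecutive $m$-blocks, and fill the $k$-th $m$-block of $(\gamma_n)$ with $U_k$ when $B_k = 1$ and with $V_k$ when $B_k = 0$; the Doeblin identity gives $(\gamma_n) \sim \nu^{\otimes \NN}$. The block-length sequence $(p_n)$ is defined so that the odd-indexed $\widetilde\gamma^p_{2k+1}$ are the pivotal $U_k$'s (each of length $m$) and the even-indexed $\widetilde\gamma^p_{2k}$ are the maximal runs of $V$-blocks between successive pivots. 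By construction $(p_n)$ is measurable with respect to $(B_k)$ alone, hence independent of the $V_k$'s out of which $(g_n)$ will be extracted; this is~(4), and the exponential moment~(3) follows from the geometric law of the gaps between successes of $(B_k)$.

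Property~(1) is built into the construction, property~(2) follows from the i.i.d.\ structure of $(B_k, U_k, V_k)$ across pivot times, and properties~(5) and~(7) follow from the Schottky alignment estimates since consecutive pivotal blocks are always separated by a non-empty free block. Property~(6) is the real payoff of~(4): conditionally on every piece of randomness other than $U_k$, the pivotal tuple $\widetilde\gamma^p_{2k+1} = U_k$ is still $\mu_F$-distributed, so the $3/4$-estimate reduces to the counting property built into $F$. The main obstacle I anticipate lies in the very last step, namely promoting the block sequence $(V_k)$ into an honestly i.i.d.\ single-matrix sequence $(g_n) \in \GL(E)^\NN$: since $\rho$ is a restriction of $\nu^{\otimes m}$ rather than a product, the coordinates within a $V_k$-block are correlated, and one must either refine $F$ into a union of product neighbourhoods $F_i^{(1)} \times \cdots \times F_i^{(m)}$ adapted to the $f_i$'s, or introduce a further coupling step that re-randomises each $V_k$-block against $\nu^{\otimes m}$. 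This is the subtle part of the proof and the place where the qualitative strengthening of \cite[Theorem~1.6]{moi} is actually carried out.
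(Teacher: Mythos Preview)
Your proposal has the right overall shape --- a Doeblin decomposition of $\nu^{\otimes m}$ against a Schottky measure, with the block structure driven by auxiliary coins independent of the ``free'' letters --- but there is a genuine gap in the way you obtain property~(5), and this gap is precisely why the paper imports the full machinery of \cite[Theorem~4.7]{moi} rather than the naive Bernoulli scheme you describe.

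You claim that one can choose $F$ so that ``any two words $u,u'\in F$ are $C$-aligned across any exterior word''. This cannot hold for any non-trivial Schottky family. If $u$ and $u'$ are proximal with attracting line $\ell_{u'}^+$ and repelling hyperplane $H_u^-$, an adversarial $h$ can always send $\ell_{u'}^+$ into $H_u^-$, forcing $\|uhu'\|$ to be arbitrarily small relative to $\|u\|\,\|hu'\|$. The Schottky property only guarantees that for each fixed $h$, \emph{most} elements of $F$ align with $h$; it never guarantees that a \emph{given} pivot aligns with its random neighbours. Consequently, in your construction where every $B_k=1$ produces a pivot, property~(5) fails on a set of positive probability (and, incidentally, two consecutive successes $B_k=B_{k+1}=1$ leave no free block in between, contradicting another of your assertions). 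The paper's route is to first lay down candidate pivots exactly as you do, and then run the two-layer rejection scheme of \cite[Theorem~4.7]{moi}: the auxiliary sequences $(v_k)$ and $(p_k)$ implement a backtracking walk that discards candidate pivots failing to align with their neighbours, and the $1/6$-Schottky bound is what makes the resulting gap lengths have exponential tails. Crucially, items~(1) and~(5) of that theorem state that $(v_k)$ and $(p_k)$ are independent of $(u_k)$ and $(w_k)$, so the final pivot times $(\check p_k)$ are still measurable in data independent of the free letters --- which is exactly the independence you are after, but now with almost-sure alignment built in.

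On the second point, your identification of the obstacle --- that the complement measure $\rho$ is not a product --- is correct, but your proposed cures (refining $F$ into product boxes, or re-randomising $V$-blocks) are unnecessary. The paper's solution is simpler: since the Schottky measure $\tilde\nu_s$ is supported on $K^m$, the complement $\tilde\kappa$ coincides with $\nu^{\otimes m}$ (up to normalisation) on the set of $m$-tuples having at least one letter outside $K$. Hence, conditionally on the pattern $(\mathds{1}_K(\gamma_n))_n$, the letters $\gamma_n\notin K$ are already i.i.d.\ with law $\nu_{K^c}$; one then fills the remaining slots with an independent i.i.d.\ $\nu_{K^c}$ sequence to obtain $(g_n)$. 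No refinement of $F$ is needed --- only the observation that subtracting a measure supported on $K^m$ leaves the product structure intact on individual coordinates outside $K$.
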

	
	The proof of Theorem \ref{th:pivot} is essentially the same as the proof of \cite[Theorem~1.6]{moi} since both are direct consequences of \cite[Theorem~4.7]{moi} applied to $(\mathrm{GL}(E), \nu)$, using the formalism of \cite[§2\&3]{moi}. 
	A detailed proof is given in Section \ref{section:pivot}.
	
	In Theorem \ref{th:pivot}, we say that there exists two independent sequences $(g_n)$ and $(p_n)$ such that for all $n$, we have $g_n = \gamma_n$ when $\gamma_n \notin K$. 
	Since the value of $g_n$ does not appear anywhere else in the conclusions of the Theorem, we could have simply said that relatively to the data of the set $I = \{n \ge 0\,|\, \gamma_n \notin K\}$ the data of $(\gamma_n)_{n \in I}$ is independent of the data of $(p_n)_{n \ge 0}$, which means that for all event $A$ that depends on $I$ and $(\gamma_n)_{n \in I}$ and all even $B$ that depends on $I$ and $(p_n)_{n \ge 0}$, we have $\PP(A \cap B\,|\, I) = \PP(A\,|\, I) \PP(B\,|\, I)$ almost surely in $I$.

	\subsection{The Generalized Central Limit Theorem (GCLT)}
	
	Let us detail the framework behind the proof of Theorem \ref{th:loi-stable} and state related results.
	Let us first remind the statement of the classical GCLT for non-negative random variables.	
	
	We say that a sequence of $\sigma$-finite measures $(\mu_n)_n$ on a topological space $X$ converges in the weak-$*$ topology to a $\sigma$-finite measure $\mathcal{L}$ when for all continuous and compactly supported function $f : X \to \RR$ (\ie there exists a compact $K \subset X$ such that $f(x) = 0$ for all $x \in X \setminus K$), we have:
	\begin{equation}\label{cv-int}
		\lim_n \int_{X} fd\mu_n(x) = \int_X f d\mathcal{L}(x),
	\end{equation}
	in this case, we write $\mu_n \rightharpoonup \mathcal{L}$. 
	If moreover $\mu_n$ is a probability measure for all $n$ and $\mathcal{L}$ also is, then \eqref{cv-int} holds for all bounded and continuous function.
	On $X  = \RR$, given a sequence of probability measures $(\mu_n)_n$  and a probability measure $\mathcal{L}$, Lévi's Theorem tells us that $\mu_n \rightharpoonup \mathcal{L}$ if and only if we have $\lim_n \int_{-\infty}^{+\infty} e^{i\theta x} d\mu_n(x) = \lim_n \int_{-\infty}^{+\infty} e^{i\theta x} d\mathcal{L}(x)$.
	For all measure $\mu$ on $\RR$, we write $\widehat{\mu} : \theta \mapsto \int_{-\infty}^{+\infty} e^{i\theta x} d\mu(x)$ for the Fourier transform of $\mu$. This is a continuous maps from $\RR$ to the unit disc in $\CC$.
	
	We say that a probability measure $\mu$ on $\RR$ is in the domain of attraction of a non-degenerate probability measure $\mathcal{L}$ when there exist sequences $(a_n) \in (0, + \infty)^\NN$ and $(b_n) \in \RR^\NN$ such that $(x \mapsto (x-b_n)/a_n)_* \mu \rightharpoonup \mathcal{L}$.
	By Lévi's Theorem, this is equivalent to saying that for all $\theta \in \RR$, we have $\widehat{\mu}(\theta / a_n)^ne^{-i\theta b_n / a_n} \to \widehat{\mathcal{L}}(\theta)$.
	
	In this case, it has been shown that $\mathcal{L}$ is of type $\mathcal{L}^{\alpha, \beta}_{a,b}$, for some constants $0 < \alpha \le 2$, $-1 \le \beta \le 1$, $a > 0$ and $b \in \RR$, where $\mathcal{L}^{\alpha, \beta}_{a,b}$ is characterized by its Fourier transform $\widehat{\mathcal{L}}^\alpha_{a,b}$, defined as:
	\begin{equation}\label{Fourier}
		\widehat{\mathcal{L}}^{\alpha, \beta}_{a,b}: \theta \mapsto \begin{cases}
			\exp\left(i\theta b - C_\alpha |a\theta|^\alpha (1- i \beta\sign(\theta) \tan(\pi\alpha / 2)) \right) & \text{for } \alpha \neq 1, \\
			\exp\left(i\theta b - C_1 |a\theta| - 2 i \beta a \theta \log|a\theta| / \pi) \right) & \text{for } \alpha = 1.
		\end{cases}
	\end{equation}
	Where $\sign$ is the sign function that takes value $0$ at $0$, $+1$ on $\RR_{> 0}$ and $-1$ on $\RR_{< 0}$ and $C_\alpha = \int_{0}^{+ \infty} x^\alpha\sin(x)dx$ for $x < 2$, that way $\lim_{t \to + \infty} t^\alpha\mathcal{L}^\alpha_{a,b}(t, + \infty) = a^{\alpha}$ and $C_2 = 1$, that way $\mathcal{L}^2_{a,b}$ is the Gaussian distribution of variance $a^2$ and mean $b$.
	
	Note that the set $\{\widehat{\mathcal{L}}^{\alpha, \beta}_{a,b}; a> 0, b\in \RR\}$ is closed under multiplication, which means that the space $\{\mathcal{L}^{\alpha, \beta}_{a,b}; a> 0, b\in \RR\}$ is closed under convolution.
	The terminology $\alpha$-stable comes from the fact that for all $a_0, a_1, b_0, b1$, we have $\mathcal{L}^{\alpha, \beta}_{a_0,b_0} * \mathcal{L}^{\alpha, \beta}_{a_1,b_1} = \mathcal{L}^{\alpha, \beta}_{a_2,b2}$ whit $a_2^\alpha = a_0^\alpha + a_2^\alpha$ and $b_2 = b_0 + b_1 + b'$ with $b' = 0$ for $\alpha \neq 1$ and $b' = \frac{2\beta}{\pi} (a_0\log(a_0) + a_1 \log(a_1) - a_2\log(a_2))$.
	
	For $\alpha < 2$, saying that $\widehat{\mu}(\theta / a_n)^ne^{-i\theta b_n / a_n} \to \widehat{\mathcal{L}}(\theta)$ for all $\theta$ is equivalent to saying that for all $t > 0$, we have:
	\begin{equation}
		\lim_{n \to + \infty} t^\alpha\mu(t a_n, + \infty) = \frac{1+\beta}{2}a^\alpha\quad \text{and}\quad \lim_{n \to + \infty} t^\alpha\mu(-\infty, -t a_n) = \frac{1-\beta}{2} a^\alpha 
	\end{equation}
	and that $b_n / a_n \to b$ for $\alpha < 1$, that $(b_n - n\int_{-\infty}^{+ \infty} x d\mu(x))/a_n \to b$ when $\alpha > 1$ and that $(b_n - n\int_{-a_n}^{+a_n} x d\mu(x))/a_n \to b$ when $\alpha =1$.
	For $\alpha = 2$, $\mathcal{L}^{\alpha, \beta}_{a,b}$ is simply the normal distribution of variance $a^2$ and mean $b$ (it does not depend on $\beta$ because $\tan(\pi\alpha / 2) = 0$) and saying that $\widehat{\mu}(\theta / a_n)^ne^{-i\theta b_n / a_n} \to \widehat{\mathcal{L}}(\theta)$ for all $\theta \in \RR$ is equivalent to saying that for all $t > 0$, we have:
	\begin{equation}
		\frac{n}{a_n^2}\left(\int_{-a_n t}^{+ a_nt} x^2 d\mu(x) - \left(\int_{-a_n t}^{+a_nt} x d\mu(x)\right)^2\right) \to a^2
	\end{equation}
	and $(b_n - n\int_{-\infty}^{+ \infty} x d\mu(x))/a_n \to b$.
	
	We prove Theorem \ref{th:loi-stable} as a corollary of the following result.
	
	\begin{Th}[Generalized Central-Limit-Theorem for the norm]\label{th:GCLT-1}
		Let $\nu$ be a strongly irreducible and proximal probability distribution over $\mathrm{GL}(E)$. 
		Let $(\gamma_n)_{n \ge 0} \sim \nu^{\otimes\NN}$ and assume that $\EE(\kappa(\gamma_0)^2) = + \infty$.
		Assume that there exist sequences $(a_n) \in \RR_{> 0}^\NN$ and $(b_n) \in \RR^\NN$ such that the law of $ (\sum_{i = 0}^{n - 1}\kappa(\gamma_i) - b_n)/{a_n}$ converges to a non-degenerate probability distribution $\mathcal{L}$ with stability parameter $0 < \alpha \le 2$.
		Assume that there exists $q > \alpha$ such that $\EE(N(\gamma_0)^{q / 2}) < + \infty$.
		Then, there exist a sequence $(\gamma_n)_{n \ge 0} \sim \nu^{\otimes\NN}$ and a sequence $(y_n)$, defined on the same probability space, such that $y_n \sim \mathcal{L}$ for all $n$ and:
		\begin{equation}
			\lim_{n \to \infty} \EE\left(\left|\frac{\kappa(\overline\gamma_n) - b_n - nb}{a_n} - y_n\right|^q\right) = 0.
		\end{equation}
	\end{Th}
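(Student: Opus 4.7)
Write $\kappa(\overline\gamma_n) = S_n + \Delta_n$ with $S_n := \sum_{k=0}^{n-1}\kappa(\gamma_k)$ and $\Delta_n := \Delta\kappa(\widetilde\gamma_{0,n}) \le 0$. The problem then splits into two essentially independent tasks: (a) couple the real-valued i.i.d. sum $S_n$ with an $\mathcal{L}$-distributed sequence $(y_n)$ in $L^q$, and (b) establish a quantitative law of large numbers $\|\Delta_n - n\delta(\nu)\|_q = o(a_n)$. Together, (a) and (b) identify the centering $b = -\delta(\nu) \ge 0$ and yield the claimed $L^q$ approximation by the triangle (or quasi-triangle) inequality in $L^q$.

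For (a), I invoke the real-line black-box Lemma \ref{lem:stable-law} applied to $\mu = \kappa_*\nu$: it produces a coupling of $(\kappa(\gamma_k))_k$ with an $\mathcal{L}$-distributed sequence $(y_n)$ such that $\|(S_n - b_n)/a_n - y_n\|_q \to 0$. Since $\overline\gamma_n$ is a function of the same $(\gamma_k)$, this coupling is automatically compatible with $\Delta_n$, so no extra probability-space enlargement is needed.

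For (b), the integration-by-parts argument of Lemma \ref{lem:ellq-almost} applied to Theorem \ref{th:Delta-unif} converts the hypothesis $\EE(N(\gamma_0)^{q/2}) < +\infty$ into the uniform bound
\begin{equation*}
\sup_{0\le i<j<k}\EE\bigl(|\Delta\kappa(\gamma_{i,j}, \gamma_{j,k})|^q\bigr) \le C.
\end{equation*}
I then perform a dyadic telescope: for $n = 2^K$,
\begin{equation*}
\Delta_{2^K} \;=\; \sum_{l=1}^{K} Y_l, \qquad Y_l := \sum_{j=0}^{2^{K-l}-1}\Delta\kappa\bigl(\gamma_{j\cdot 2^l,\, j\cdot 2^l + 2^{l-1}},\, \gamma_{j\cdot 2^l + 2^{l-1},\,(j+1)\cdot 2^l}\bigr),
\end{equation*}
and for fixed $l$ the $2^{K-l}$ summands defining $Y_l$ are i.i.d. and uniformly bounded in $L^q$. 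Rosenthal's inequality ($q \ge 2$), von Bahr--Esseen ($1 \le q \le 2$), and the quasi-triangle inequality $\EE|\sum Z_j|^q \le \sum \EE|Z_j|^q$ ($q \le 1$) together give $\|Y_l - \EE Y_l\|_q \le C_q\,(2^{K-l})^{1/(q\vee 2)}$; summing over $l$ yields $\|\Delta_{2^K} - \EE\Delta_{2^K}\|_q = O(n^{1/(q\vee 2)})$, and interpolating between dyadic scales extends this to all $n$. The subadditive recursion $\EE\Delta_{2n} = 2\EE\Delta_n + \EE\Delta\kappa(\gamma_{0,n},\gamma_{n,2n})$ combined with the uniformly bounded cross-term produces $|\EE\Delta_n - n\delta(\nu)| = O(1)$, whence $\|\Delta_n - n\delta(\nu)\|_q = O(n^{1/(q\vee 2)}) = o(a_n)$, the last equality because $a_n$ is regularly varying of index $1/\alpha$ with $\alpha < q\vee 2$.

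The step I expect to be the main obstacle is (b). The decisive input is the \emph{squared} probability on the right-hand side of Theorem \ref{th:Delta-unif}: it effectively halves the order of the moment needed to control the two-block cancellation, precisely matching the $q/2$-th moment hypothesis on $N$ to the desired $q$-th moment control of $\Delta_n$, and making the dyadic telescope close strictly below the scale $a_n$.
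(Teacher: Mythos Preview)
Your plan is essentially the paper's own proof: it too writes $\kappa(\overline\gamma_n)=S_n+\Delta_n$, couples $S_n$ with $\mathcal L$ via Lemma~\ref{lem:stable-law}, and controls $\Delta_n$ through the same dyadic telescope (packaged there as Lemma~\ref{lem:dichotomy} feeding into Theorems~\ref{th:wlln} and~\ref{th:clt-Delta}, with the uniform $L^q$ bound on the two-block cross term coming from Lemma~\ref{lem:ellq-almost} exactly as you describe). One slip to correct: the exponent should be $1/(q\wedge 2)$, not $1/(q\vee 2)$ --- von Bahr--Esseen gives $\|Y_l-\EE Y_l\|_q\lesssim m_l^{1/q}$ for $1\le q\le 2$ and Rosenthal gives $m_l^{1/2}$ for $q\ge 2$, so after summing you obtain $O(n^{1/(q\wedge 2)})$; with this corrected bound the case $\alpha=2$ genuinely needs the standing hypothesis $\EE(\kappa(\gamma_0)^2)=+\infty$ to force $\sqrt n/a_n\to 0$, which is exactly why the paper splits into $q<2$ (Theorem~\ref{th:wlln}) and $q=2$ (Theorem~\ref{th:clt-Delta}). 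A second, harmless imprecision: the drift control for general $n$ is $|\EE\Delta_n-n\delta(\nu)|=O(\log n)$ rather than $O(1)$, but this is absorbed by $a_n$ in any case.
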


	\begin{Th}[Weak law of large numbers with doubled exponent]\label{th:wlln}
		Let $\nu$ be a strongly irreducible and proximal probability distribution over $\mathrm{GL}(E)$ and let $(\gamma_n)_{n \ge 0} \sim \nu^{\otimes\NN}$.
		Let $0 < q < 2$ be a constant such that $\EE(N(\gamma_0)^{q / 2}) < + \infty$.
		When $q \ge 1$, $\delta(\nu)$ is finite and we have:
		\begin{equation*}
			\EE\left(\left|\frac{\Delta\kappa(\widetilde\gamma_{0,n}) - n \delta(\nu)}{n^{1/q}}\right|^q\right) \underset{n \to + \infty}{\longrightarrow} 0.
		\end{equation*}
		When $q < 1$, we have:
		\begin{equation*}
			\EE\left(\left|\frac{\Delta\kappa(\widetilde\gamma_{0,n})}{n^{1/q}}\right|^q\right) \underset{n \to + \infty}{\longrightarrow} 0.
		\end{equation*}
	\end{Th}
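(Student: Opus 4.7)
My plan is a dyadic decomposition of $\Delta\kappa(\widetilde\gamma_{0,n})$ combined with truncation, with Theorem \ref{th:Delta-unif} providing the crucial moment bound.

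Assume $n = 2^N$; the general case follows by binary expansion with $O(\log n)$ uniformly $L^q$-bounded boundary terms. Iterating the identity $\Delta\kappa(\widetilde\gamma_{0,2m}) = \Delta\kappa(\widetilde\gamma_{0,m}) + \Delta\kappa(\widetilde\gamma_{m,2m}) + \Delta\kappa(\gamma_{0,m}, \gamma_{m,2m})$ writes
$$\Delta\kappa(\widetilde\gamma_{0,n}) = \sum_{\ell=0}^{N-1}\sum_{i=0}^{2^\ell-1} D_{\ell,i},\qquad D_{\ell,i} := \Delta\kappa(\gamma_{a,a+m},\gamma_{a+m,a+2m})$$
with $m = 2^{N-\ell-1}$ and $a = i\cdot 2^{N-\ell}$. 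For each fixed $\ell$ the variables $(D_{\ell,i})_i$ depend on pairwise disjoint blocks of $(\gamma_k)$ and are therefore i.i.d. From Theorem \ref{th:Delta-unif} together with the elementary inequality $\min(N(g),N(g'))^q \le N(g)^{q/2}N(g')^{q/2}$ applied to two i.i.d.\ copies, integrating the tail bound \eqref{Delta-unif} yields a uniform moment bound $\EE|D_{\ell,i}|^q \le M_q := C(\EE N(\gamma_0)^{q/2})^2\sum_r e^{-\beta r}r^q < \infty$. Since the tail bound on $|D_{\ell,i}|$ is uniform and monotone, it also gives $\eta(T) := \sup_{\ell,i}\EE|D_{\ell,i}|^q\mathds{1}_{|D_{\ell,i}|>T} \to 0$ as $T \to \infty$.

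Fix $T = T(n) \to \infty$ with $T = o(n^{1/q})$ (for instance $T = n^{1/(2q)}$), and split $D_{\ell,i} = Y_{\ell,i} + Z_{\ell,i}$ with $Y_{\ell,i} := D_{\ell,i}\mathds{1}_{|D_{\ell,i}| \le T}$. The truncated part satisfies $\mathrm{Var}(Y_{\ell,i}) \le T^{2-q}M_q$; using $q < 2$ to get $\|\cdot\|_q \le \|\cdot\|_2$, independence within each level, and Minkowski across levels,
$$\bigl\|\textstyle\sum_{\ell,i}(Y_{\ell,i}-\EE Y_{\ell,i})\bigr\|_q \le \sum_{\ell=0}^{N-1} \sqrt{2^\ell T^{2-q}M_q} = O\bigl(\sqrt{n\,T^{2-q}}\bigr) = o(n^{1/q}).$$
For the tail part, Von Bahr–Esseen (for $q \ge 1$) or the subadditivity $|a+b|^q \le |a|^q + |b|^q$ (for $q < 1$) within each level, followed by the same across levels, gives
$$\bigl\|\textstyle\sum_{\ell,i} Z_{\ell,i}\bigr\|_q = O\bigl(n^{1/q}\eta(T)^{1/q}\bigr) = o(n^{1/q}).$$
For $q \ge 1$ these combine to $\|\Delta\kappa(\widetilde\gamma_{0,n}) - \EE\Delta\kappa(\widetilde\gamma_{0,n})\|_q = o(n^{1/q})$; for $q < 1$, $q$-subadditivity removes the need for centering and directly yields $\EE|\Delta\kappa(\widetilde\gamma_{0,n})|^q = o(n)$. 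For $q \ge 1$ one has $q/2 \ge 1/2$, so $\EE|D_{\ell,i}| \le M_q^{1/q} < \infty$ and $\delta(\nu)$ is finite; the recursion $\EE\Delta_{2m} = 2\EE\Delta_m + \EE\Delta\kappa(\gamma_{0,m},\gamma_{m,2m})$ with uniformly $L^1$-bounded cross term telescopes to $|\EE\Delta_{2^N} - 2^N\delta(\nu)| = O(1)$, and binary expansion extends this to $|\EE\Delta\kappa(\widetilde\gamma_{0,n}) - n\delta(\nu)| = O(\log n) = o(n^{1/q})$ for general $n$.

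The main obstacle is that the naive Minkowski bound on the dyadic decomposition only gives $O(n^{1/q})$; obtaining the desired $o(n^{1/q})$ requires the truncation with a threshold $T$ growing strictly slower than $n^{1/q}$, together with the uniform (in $\ell$) tail control $\eta(T) \to 0$. The other key point is the \emph{doubling of the exponent} — extracting a $q$-th moment bound on each $D_{\ell,i}$ from only a $(q/2)$-th moment on $N$ — which is precisely what the squared probability in \eqref{Delta-unif} provides.
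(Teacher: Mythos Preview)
Your approach is essentially the paper's: the same dyadic decomposition (the paper's Lemma~\ref{lem:dichotomy}) together with the uniform $L^q$ bound on the cross terms coming from Theorem~\ref{th:Delta-unif} (the paper's Lemma~\ref{lem:ellq-almost}, i.e.\ your $M_q$). The paper then packages the remaining step as the classical $L^q$ weak law for each level's i.i.d.\ sum (Lemma~\ref{lem:ldev-poly}) plus dominated convergence over levels, whereas you inline that WLLN via an explicit truncation---same mechanism, different bookkeeping.

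Two small points to tighten. For $q\ge 1$, Von Bahr--Esseen requires centred summands, so your tail step should bound $\bigl\|\sum_{\ell,i}(Z_{\ell,i}-\EE Z_{\ell,i})\bigr\|_q$ rather than $\bigl\|\sum_{\ell,i} Z_{\ell,i}\bigr\|_q$; the same $O(n^{1/q}\eta(T)^{1/q})$ estimate then holds and, combined with the $Y$-part, gives exactly the centred conclusion you state. For $q<1$, Minkowski across levels is unavailable, so the $Y$-part also needs $q$-subadditivity across levels (together with $\EE|X|^q\le(\EE X^2)^{q/2}$ within each level); with that adjustment your truncation scheme goes through.
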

	
	\begin{Th}\label{th:clt-Delta}
		Let $\nu$ be a strongly irreducible and proximal probability distribution over $\mathrm{GL}(E)$ and let $(\gamma_n)_{n \ge 0} \sim \nu^{\otimes\NN}$.
		Assume that $\EE(N(\gamma_0)) < + \infty$.
		Then $\delta(\nu)$ is finite and there exists a coupling of $(\gamma_n)$ with a sequence of identically distributed centred Gaussian random variables $(y_n)$, such that:
		\begin{equation}\label{clt-Delta}
			\EE\left(\left|\frac{\Delta\kappa(\widetilde\gamma_{0,n}) - n \delta(\nu)}{\sqrt{n}} - y_n\right|^2\right) \underset{n \to + \infty}{\longrightarrow} 0.
		\end{equation}
	\end{Th}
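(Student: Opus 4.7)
The strategy is to use the pivoting technique of Theorem \ref{th:pivot} to express $\Delta\kappa(\widetilde\gamma_{0,n}) - n\delta(\nu)$ as a sum of i.i.d.\ random variables of finite variance, up to an $L^2$-controlled error, and then apply the classical $L^2$-CLT for i.i.d.\ summands.

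\emph{Finiteness of $\delta(\nu)$ and uniform $L^2$ bound.} Applying Theorem \ref{th:Delta-unif} with $q = 1$ and integrating by parts as sketched below its statement yields the uniform bound $B := \sup_{0 \le i < j < k}\EE\bigl(\Delta\kappa(\gamma_{i,j},\gamma_{j,k})^2\bigr) < +\infty$ under the assumption $\EE(N(\gamma_0)) < +\infty$. Plugging this into the identity $\Delta\kappa(\widetilde\gamma_{0,m+n}) = \Delta\kappa(\widetilde\gamma_{0,m}) + \Delta\kappa(\widetilde\gamma_{m,m+n}) + \Delta\kappa(\gamma_{0,m},\gamma_{m,m+n})$ shows that the sub-additive sequence $n \mapsto \EE(\Delta\kappa(\widetilde\gamma_{0,n}))$ has bounded defect, and hence $\delta(\nu) \in \RR$.

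\emph{Pivoting and i.i.d.\ decomposition.} I couple $(\gamma_n) \sim \nu^{\otimes\NN}$ with the pivotal-time sequence $(p_n)$ of Theorem \ref{th:pivot}, set $T_k := \overline{p}_{2k}$, and define $X_k := \Delta\kappa(\widetilde\gamma_{T_k,T_{k+1}})$. Iterating the regeneration property~(\ref{pivot:markov}) shows that the pairs $(\widetilde\gamma^p_{2k},\widetilde\gamma^p_{2k+1})_{k \ge 0}$, and therefore the sequence $(X_k)_{k \ge 0}$, are i.i.d. A careful integration of the tail bound of Theorem \ref{th:Delta-unif} against the exponential tail of $T_1$ gives $\EE(X_0^2) < +\infty$. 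The telescopic identity $\kappa(\gamma_{0,T_N}) = \sum_{k<N}\kappa(\gamma_{T_k,T_{k+1}}) + \sum_{k=1}^{N-1}\Delta\kappa(\gamma_{0,T_k},\gamma_{T_k,T_{k+1}})$ then yields
\begin{equation*}
    \Delta\kappa(\widetilde\gamma_{0,T_N}) = \sum_{k=0}^{N-1}X_k + R_N, \qquad R_N := \sum_{k=1}^{N-1}\Delta\kappa(\gamma_{0,T_k},\gamma_{T_k,T_{k+1}}),
\end{equation*}
and point~(\ref{pivot:almost-ad}) of Theorem \ref{th:pivot} bounds each summand of $R_N$ uniformly by a constant $C$.

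\emph{Joint CLT and $L^2$-upgrade.} The delicate point is that $R_N$ has $N$ bounded summands and could a priori fluctuate at scale $\sqrt N$ in a way uncorrelated with the i.i.d.\ sum. I use the alignment property~(\ref{pivot:align}) together with the regeneration~(\ref{pivot:markov}) to argue that each increment $\Delta\kappa(\gamma_{0,T_k},\gamma_{T_k,T_{k+1}})$ depends on the far past $(\widetilde\gamma^p_n)_{n < 2k-1}$ only through an exponentially mixing coupling, so that a martingale approximation turns $(R_N - N\rho)/\sqrt N$, jointly with $(\sum_{k<N}X_k - N\EE(X_0))/\sqrt N$, into a bivariate centred Gaussian for some $\rho \in \RR$. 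Passing from the random time $T_N$ to the deterministic $n$ via the renewal theorem and the exponential tails of $(p_n)$, and absorbing the $L^2$-bounded boundary remainder $\Delta\kappa(\widetilde\gamma_{T_{M(n)},n})$, yields the weak convergence $(\Delta\kappa(\widetilde\gamma_{0,n}) - n\delta(\nu))/\sqrt n \Rightarrow \mathcal{N}(0,\sigma^2)$ for some $\sigma^2 \ge 0$. The uniform bound $\EE(\Delta\kappa(\widetilde\gamma_{0,n})^2) = O(n)$, which drops out of the same decomposition, provides the uniform integrability of the squared statistic needed to upgrade the weak limit to $L^2$-Wasserstein convergence, which is equivalent to the existence of the coupling with identically distributed $\mathcal{N}(0,\sigma^2)$ variables $(y_n)$ asserted by the theorem. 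The main obstacle is the analysis of $R_N$: without the alignment property~(\ref{pivot:align}), the residual could carry genuine $\Theta(\sqrt N)$ fluctuations uncorrelated with the i.i.d.\ sum, and the bulk of the work lies in establishing the stationarity-and-mixing structure of its increments that feeds into the martingale CLT.
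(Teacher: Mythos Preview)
Your approach is genuinely different from the paper's, and the sketch has a real gap.

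\textbf{What the paper does.} The paper does \emph{not} work directly with the pivotal times to prove the CLT. Instead, it extracts from Theorem~\ref{th:Delta-unif} the single consequence that the process $S_{m,n}:=\Delta\kappa(\widetilde\gamma_{m,n})$ is $C$-almost additive in $L^2$ (Lemma~\ref{lem:ellq-almost}), and then applies a general CLT for mixing $L^2$-almost-additive processes (Theorem~\ref{th:almost-clt}). The key device there is the dyadic decomposition of Lemma~\ref{lem:dichotomy}: for each scale $j$, the sum $\sum_{k<\lfloor n/2^j\rfloor}\Delta S(2^jk,2^j(k+1/2),2^j(k+1))$ is a sum of \emph{genuinely independent} bounded-variance terms (each depending only on the block $[2^jk,2^j(k+1)]$), so the classical CLT applies scale by scale and the scales are summed geometrically. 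No correlated remainder ever appears, and the $L^2$-Wasserstein convergence comes out directly, without passing through weak convergence plus uniform integrability.

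\textbf{Where your sketch is incomplete.} Your remainder $R_N=\sum_{k=1}^{N-1}\Delta\kappa(\gamma_{0,T_k},\gamma_{T_k,T_{k+1}})$ is the crux, as you recognise. Each summand depends on the \emph{entire} product $\gamma_{0,T_k}$, so the summands are neither independent nor $m$-dependent in any obvious way. You invoke ``an exponentially mixing coupling'' coming from point~(\ref{pivot:align}), but that point is a deterministic alignment statement, not a mixing estimate; it does not by itself say that $\Delta\kappa(\gamma_{0,T_k},\gamma_{T_k,T_{k+1}})$ is close in $L^2$ to a function of finitely many recent blocks. Turning this into a bona fide martingale approximation with summable corrections is a substantial piece of work that you have not carried out. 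Second, your $L^2$ upgrade is misstated: the bound $\EE(\Delta\kappa(\widetilde\gamma_{0,n})^2)=O(n)$ only gives $\sup_n\EE\bigl((\Delta\kappa(\widetilde\gamma_{0,n})/\sqrt n)^2\bigr)<\infty$, which is \emph{boundedness} in $L^1$ of the squares, not uniform integrability; it does not suffice to pass from weak convergence to $L^2$-Wasserstein convergence. You would need either a higher moment bound or direct convergence of the second moments, neither of which your decomposition provides (indeed $R_N$ is only bounded by $CN$ deterministically). The paper's dyadic route bypasses both difficulties.
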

	
	In Theorem \ref{th:clt-Delta}, we do not claim that the distribution of the $y_n$'s is non degenerate.
	In fact, when $\KK$ is ultra-metric, it is easy to construct an example of strongly irreducible and proximal random walk for which $\Delta\kappa(\widetilde\gamma_{0,n})$ is equal to $0$ almost surely and for all $n$.
	Simply take $\nu$ to be the distribution of a matrix $(\mathds{1}_{(i,j) = 1,1} + M_{i,j})_{1 \le i,j \le d}$, where $(M_{i,j})_{1 \le i, j \le d}$ is taken i.i.d. following the Haar measure on the ball $\{x \in \KK, |x| < 1\}$. 
	Then for all $\gamma$ in the support of $\nu$, we have $\|\gamma\| = |e_1^* \gamma e_1|$ and by induction, we show that, for all $\gamma, \gamma' \in \Gamma_\nu$, we have $\|\gamma\| = |e_1^* \gamma e_1|$ and $\|\gamma\| = |e_1^* \gamma e_1|$ so $\|\gamma \gamma'\| \ge |e_1^* \gamma \gamma' e_1| = \|\gamma\|\|\gamma'\|$ and therefore $\Delta\kappa(\gamma, \gamma') = 0$.
	Therefore, $\Delta\kappa_*\nu^{\otimes n} = \delta_0$ for all $n$.
	
	In \cite{CLT16}, Benoist and quint give a formula to compute the limit of $\mathrm{Var}(\kappa_*\nu^{*n})/n$ in terms of the $\nu$ stationary measures $\xi$ on $\mathrm{P}(E)$ and $\xi'$ on $\mathrm{P}(E^*)$ when $\EE(N(\gamma_0)^2) < + \infty$. 
	Namely, when $\EE(N(\gamma_0)^2) < + \infty$ following \cite{CLT16}, we have:
	\begin{equation}\label{cocycle}
		\lim_n \mathrm{Var}(\Delta\kappa_*\nu^{\otimes n} )/n = \int_{\mathrm{P}(E)}\int_{\mathrm{GL}(E)} \left(\Delta\kappa(\gamma, x) - \psi(\gamma x) + \psi(x) - \delta(\nu)\right)^2 d\nu(\gamma) d\xi[x].
	\end{equation}
	For $\psi(x) = \int_{\mathrm{P}(E^*)} \Delta\kappa(f, x)d\xi^*[f]$.
	When $\EE(N(\gamma_0)) < + \infty$, \eqref{cocycle} still holds but we do not prove that in the present article.
	
	In an upcoming article, we will see that once we assume $\nu$ to be strongly irreducible and proximal, \eqref{cocycle} holds without any moment assumptions, meaning that when the formula on the right of \eqref{cocycle} is well defined and finite, it is the limit of $\mathrm{Var}(\Delta\kappa_*\nu^{\otimes n} )/n$ and when it is not, $\mathrm{Var}(\Delta\kappa_*\nu^{\otimes n} )/n \to + \infty$.
	
	That is of no use in the present article since we only use the fact that \eqref{clt-Delta} implies that:
	\begin{equation}\label{weak-2}
		\EE\left(\left|\frac{\Delta\kappa(\widetilde\gamma_{0,n}) - n \delta(\nu)}{a_n}\right|^2\right) \underset{n \to + \infty}{\longrightarrow} 0
	\end{equation}
	for all sequence $(a_n)$ such that $\sqrt{n}/a_n \to 0$. 
	In particular, \eqref{weak-2} holds for $(a_n)$ that satisfies \eqref{dom-at} in Theorem \ref{th:loi-stable}.

	\subsection{GCLT in higher rank}
	
	Given a vector space $E$ and an integer $k$, we write $\bigwedge^k E$ for the space, endowed with a alternate $k$-linear map $\bigwedge^k: E^k \to \bigwedge^k E$ that has the universal property of factorizing all alternate $k$-linear maps, equivalently, it is the dual of the space of alternate $k$-linear forms. 
	Note that $\bigwedge^k E$ is a $\KK$-vector space of dimension $\binom{d}{k}$ for all $0 \le k \le d$, is $\{0\}$ for $k > d$ and is undefined for $k < 0$.
	
	We remind that the Euclidean (resp. Hermitian, resp. ultra-metric) norm on $E$ defines a unique Euclidean (resp. Hermitian, resp. ultra-metric) norm on $\bigwedge^k E$ for all $1 \le k \le d$. This norm is characterized by the fact that for all family $(x_i)_{0 \le i < k} \in E^k$, we have $\|\bigwedge_{i = 0}^{k-1} x_i\| = \min_{\bigwedge y_i = \bigwedge x_i}\prod_{i = 0}^{k-1} \|y_i\|$ and therefore $\|\bigwedge_{i = 0}^{k-1} x_i\| \le \prod_{i = 0}^{k-1} \|x_i\|$.
	
	The natural norm on $\bigwedge^k E$ can be constructed for example as a scalar multiple of the norm induced\footnote{Given a linear and surjective map, $\pi : E \to F$, with $E$ a normed vector space, the semi-norm induced by $\pi$ on $F$ is simply the distance (closest points distance or Hausdorff distance, they match in this case) between cosets. When $E$ is finite dimensional, it is easy to check that this semi-norm is in fact a norm.} from the natural projection $E^{\otimes k} = \bigotimes^k E \twoheadrightarrow \bigwedge^k E$, that identifies two $k$-vectors whenever they have the same image by all alternate $k$-linear map.
	The scaling constant is $1$ when $\KK$ is ultra-metric and $\sqrt{k!}$ when $\KK$ is Archimedean, \ie $\KK = \RR$ or $\KK = \CC$.
	For that reason, the choice of the Euclidean or Hermitian norm is not always consistent through the literature. 
	That being said, the choice of scalar multiple of norms plays no importance in the present work since the scaling constant cancels out in the operator norm on $\mathrm{End}(\bigwedge^k E)$.
	
	Given $g \in \mathrm{GL}(E)$, and $1 \le k \le d$, we write $\bigwedge^k g\in \mathrm{GL}(\bigwedge^k E)$ for the map characterized by $\bigwedge^k g \bigwedge_{i = 1}^{k} x_i = \bigwedge_{i = 1}^{k} g x_i$.
	We say the a measure $\nu$ on $\mathrm{GL}(E)$ is totally irreducible if the measure $\bigwedge^k_*\nu$ is strongly irreducible for all $1 \le i \le d$.
	For all $1 \le i \le d$, we define:
	\begin{equation}\label{kappa-i-def}
		\kappa_i(g) = \log\left(\frac{\|\Wedge^i g\|}{\|\Wedge^{i-1} g\|}\right) = \max_{\substack{F \subset E \\ \dim(F) \ge i}} \min_{x \in F \setminus\{0\}} \log\left(\frac{\|gx\|}{\|x\|}\right).
	\end{equation}
	The right hand side makes it clear that the map $k \mapsto\kappa_k(g)$ is non decreasing.
	We write $\dot\kappa : \mathrm{GL}(E) \to \RR^d$ for the map $g \mapsto (\kappa_i(g))_{1 \le i \le d}$, the map $\dot\kappa$ is often referred to as the Cartan projection.
	Note that the right member of \eqref{kappa-i-def} makes sense for all linear map $g \in \mathrm{Hom}(E_0, E_1)$, with the convention $\kappa_i(g) = - \infty$ when $i > \dim(E_0)$ for $E_0$ and $E_1$ two Euclidean, Hermitian or ultra-metric spaces.
	
	For all $0 \le j \le d$, and for all $g \in \mathrm{GL}(E)$, we write $\overline{\kappa}_j(g) = \log\|\bigwedge^k g\| = \sum_{i = 1}^j \kappa_i(g)$ and we write $\overline\kappa(g) = (\overline\kappa_j(g))_{0 \le j \le d} \in \RR^d$. 
	Note that $\overline\kappa_j$ is a sub-additive map for all $0 \le j \le d$ so $\overline\kappa$ and $\Delta\overline\kappa$ are both sub-additive for the coordinate by coordinate comparison order.

	\begin{Def}
		Let $\nu$ be a probability measure on $\mathrm{GL}(E)$ and let $(\gamma_n) \sim \nu^{\otimes\NN}$. 
		For all $0 \le j \le d$, we write:
		\begin{equation}
			\overline\delta(\nu) := \lim_{n \to + \infty}\frac{\EE(\Delta\overline\kappa(\widetilde\gamma_n))}{n}.
		\end{equation}
		When, $\overline\delta(\nu) \in (-\infty, 0]^d$, we write $\dot\delta(\nu) : = (\overline\delta_i(\nu) - \overline\delta_{i-1}(\nu))_{1 \le i \le d}$.
	\end{Def}

	Contrary to $\kappa$, $\dot\kappa$ is not sub-additive, however, the map $\overline{\kappa} = (\kappa_1 + \cdots +\kappa_k)_{0 \le k \le d}$ is sub-additive coordinate by coordinate because we have $\overline{\kappa}_k(g) = \log\|\bigwedge^k g\|$ for all $g \in \mathrm{GL}(E)$ and $0 \le k \le d$.
	Therefore, by Kingman-s sub-additive ergodic Theorem, there is a vector $\overline{\delta}(\nu) \in [-\infty, 0]^d$ such that $\nu^{\otimes \NN}$-almost surely, $\lim_{n}\frac{\Delta\overline\kappa(\widetilde\gamma_n)}{n} = \overline{\delta}(\nu)$.
	When $\overline{\delta}(\nu) \in (-\infty, 0]^d$, we simply set $\dot\delta(\nu)_k = \overline\delta(\nu)_k - \overline\delta(\nu)_{k-1}$ for all $1 \le k \le d$.

	\begin{Th}[Weak law of large numbers in higher rank]\label{th:wlln-high}
		Let $\nu$ be a totally irreducible probability measure on $\mathrm{GL}(E)$ and let $(\gamma_n)_{n \ge 0} \sim \nu^{\otimes\NN}$. 
		Let $0 < q < 2$ such that $\EE(N(\gamma_0)^{q/2}) < + \infty$. 
		When $q \ge 1$, $\dot\delta(\nu)$ is well defined and finite and we set $b = -\dot\delta(\nu)$ otherwise, we set $b = 0$.
		Then we have:
		\begin{equation}
			\lim_{n \to + \infty} \EE\left(\left\|\frac{\Delta\dot\kappa(\widetilde\gamma_n) + nb}{n^{1/q}}\right\|^q\right) = 0.
		\end{equation}
	\end{Th}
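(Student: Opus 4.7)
The plan is to deduce Theorem~\ref{th:wlln-high} from $d$ parallel applications of the rank-one Theorem~\ref{th:wlln}, one in each exterior power $\bigwedge^k E$. The crucial identity is $\overline\kappa_k(g)=\log\|\bigwedge^k g\|$: this exhibits $\overline\kappa_k$ as the log-operator-norm for the induced representation on $\bigwedge^k E$, so $\Delta\overline\kappa_k(\widetilde\gamma_n)$ is literally the rank-one norm-cancellation of the push-forward random walk $(\bigwedge^k_*\nu)^{\otimes\NN}$. Since $\kappa_k=\overline\kappa_k-\overline\kappa_{k-1}$ (with $\overline\kappa_0\equiv 0$), controlling each $\Delta\overline\kappa_k$ in $L^q$ controls each coordinate of $\Delta\dot\kappa$.

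First I would check that, for every $1\le k\le d$, the push-forward $\bigwedge^k_*\nu$ satisfies the hypotheses of Theorem~\ref{th:wlln}. Strong irreducibility is exactly the definition of total irreducibility. Proximality of $\bigwedge^k_*\nu$ needs a separate argument: the cleanest route is to apply Theorem~\ref{th:pivot} directly to $\bigwedge^k_*\nu$, since strong irreducibility suffices to launch the pivotal construction, which in turn produces proximal elements in the closure. Equivalently, total irreducibility plus proximality of $\nu$ on $E$ forces the full Lyapunov spectrum to be simple, hence proximality on each $\bigwedge^k E$. I would isolate this as a short lemma.

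Next I would transfer the moment hypothesis. Using $\log\|\bigwedge^k g\|\le k\,\kappa(g)$ together with $(\bigwedge^k g)^{-1}=\bigwedge^k g^{-1}$ gives $N_{\bigwedge^k E}(\bigwedge^k g)\le k\,N(g)$, whence $\EE(N_{\bigwedge^k E}(\bigwedge^k\gamma_0)^{q/2})\le k^{q/2}\EE(N(\gamma_0)^{q/2})<+\infty$. Theorem~\ref{th:wlln} then yields, for each $k$,
\begin{equation*}
\EE\!\left(\left|\tfrac{\Delta\overline\kappa_k(\widetilde\gamma_n)-n\,\overline\delta_k(\nu)}{n^{1/q}}\right|^q\right)\to 0\ \ (q\ge 1), \qquad \EE\!\left(\left|\tfrac{\Delta\overline\kappa_k(\widetilde\gamma_n)}{n^{1/q}}\right|^q\right)\to 0\ \ (q<1),
\end{equation*}
with $\overline\delta_k(\nu)$ finite in the first case. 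Writing $\Delta\kappa_k-n\dot\delta_k(\nu)=(\Delta\overline\kappa_k-n\overline\delta_k(\nu))-(\Delta\overline\kappa_{k-1}-n\overline\delta_{k-1}(\nu))$ and applying $|x+y|^q\le 2^q(|x|^q+|y|^q)$ for $q\ge 1$, or subadditivity of $t\mapsto t^q$ for $q\le 1$, propagates the convergence to each coordinate of $(\Delta\dot\kappa(\widetilde\gamma_n)+nb)/n^{1/q}$. Equivalence of norms on $\RR^d$ then gives the vector-valued conclusion.

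The main obstacle I anticipate is the proximality verification on each $\bigwedge^k E$ from total irreducibility alone, since under the hypothesis $\EE(N^{q/2})<+\infty$ with $q<2$ one does not have direct access to the first-moment theory of Lyapunov exponents. This is why I favour the Theorem~\ref{th:pivot} route, which requires no moment assumption beyond strong irreducibility. A minor care-point is matching the $q\ge 1$ versus $q<1$ centring conventions across the $d$ coordinates, but this is automatic since the same dichotomy governs each scalar application.
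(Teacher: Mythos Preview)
Your reduction to the exterior powers via $\overline\kappa_k(g)=\log\|\bigwedge^k g\|$ is exactly the right lever, and it is also what the paper does inside Lemma~\ref{lem:ellq-higher}. The genuine gap is the proximality step. Total irreducibility, as defined in the paper, only says that each $\bigwedge^k_*\nu$ is strongly irreducible; it does \emph{not} force proximality on every exterior power (think of the Haar measure on $\mathrm{SO}(3)$ acting on $\RR^3$: all exterior powers are strongly irreducible, none is proximal). Neither of your proposed routes closes this: Theorem~\ref{th:pivot} has proximality in its hypotheses, not just strong irreducibility, so you cannot invoke it to manufacture a proximal element; and the simplicity-of-the-Lyapunov-spectrum argument requires both a first moment on $N$ (unavailable when $q/2<1$) and proximality of $\nu$ on $E$, which is not assumed in Theorem~\ref{th:wlln-high}.

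The paper circumvents this by \emph{not} assuming proximality on each $\bigwedge^k E$. In Lemma~\ref{lem:ellq-higher} it introduces the set $\Theta(\nu)$ of indices $k$ for which $\bigwedge^k_*\nu$ is proximal, applies the rank-one estimate (Lemma~\ref{lem:ellq-almost}) only for $k\in\Theta(\nu)$, and for $k\notin\Theta(\nu)$ uses the Guivarc'h--Raugi fact that strong irreducibility without proximality forces $\kappa_k-\kappa_{k+1}$ to be bounded on $\Gamma_\nu$; a barycentric interpolation between the nearest elements of $\Theta(\nu)\cup\{0,d\}$ then bounds $\Delta\overline\kappa_j$ for the intermediate $j$. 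This yields $\mathrm{L}^q$-almost-additivity of the whole vector process $(\Delta\dot\kappa(\widetilde\gamma_{m,n}))_{m\le n}$, after which the paper invokes the abstract weak law (Theorems~\ref{almost-wlln-no-drift} and~\ref{th:almost-wlln}) once, rather than $d$ copies of Theorem~\ref{th:wlln}. Your coordinate-wise scheme would work verbatim on the subset $\Theta(\nu)$, but you still need the boundedness/interpolation argument for the remaining indices; once you add that, your proof and the paper's become essentially the same.
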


	\begin{Th}[Central limit Theorem in higher rank]\label{th:clt-high}
		Let $\nu$ be a totally irreducible probability measure on $\mathrm{GL}(E)$ and let $(\gamma_n)_{n \ge 0} \sim \nu^{\otimes\NN}$. 
		Assume that $N_* \nu$ is integrable. 
		Then there exist constants $b \in \RR^d$ and $a \in\RR^d \otimes\RR^d$ and a coupling of $(\gamma_n) \sim \nu^{\otimes\NN}$ with random variables $y_n \sim \mathcal{N}_a$ for all $n \in \NN$ such that:
		\begin{equation}\label{clt-high}
			\lim_{n \to \infty} \EE\left(\left\|\frac{\Delta\dot\kappa(\widetilde\gamma_n) - n\dot\delta(\nu)}{\sqrt{n}} - y_n\right\|^2\right) = 0.
		\end{equation}
	\end{Th}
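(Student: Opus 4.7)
The plan is to deduce Theorem \ref{th:clt-high} from the rank-one CLT of Theorem \ref{th:clt-Delta} applied to every exterior power $\bigwedge^k_*\nu$, with the resulting marginal couplings combined into a joint $\RR^d$-valued coupling through a single pivoting decomposition on the base random walk $(\gamma_n)$.

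First I would verify that, for each $1\le k\le d-1$, the measure $\bigwedge^k_*\nu$ is strongly irreducible (the very definition of total irreducibility) and proximal, and satisfies $\EE(N(\bigwedge^k\gamma_0))<+\infty$. Proximality of $\bigwedge^k_*\nu$ is equivalent to the spectral gap $\lambda_k(\nu)>\lambda_{k+1}(\nu)$, a standard Furstenberg--Guivarc'h consequence of the strong irreducibility of both $\bigwedge^k_*\nu$ and $\bigwedge^{k+1}_*\nu$. The moment condition is preserved because $N(\bigwedge^k g)\le k\,N(g)$. Since $\Delta\overline\kappa_k(\widetilde\gamma_{0,n})=\Delta\kappa(\widetilde{\bigwedge^k\gamma}_{0,n})$, applying Theorem \ref{th:clt-Delta} inside each $\bigwedge^k E$ yields both the finiteness of $\overline\delta_k(\nu)=\delta(\bigwedge^k_*\nu)$ (hence of $\dot\delta(\nu)$) and a marginal $L^2$-coupling for each coordinate.

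To join these into a vector-valued $L^2$-coupling, I would observe that the coupling provided by Theorem \ref{th:clt-Delta} comes from the pivoting construction of Theorem \ref{th:pivot} applied to $(\gamma_n)$ on $\mathrm{GL}(E)$, a construction that depends only on $\nu$ and not on the particular cocycle being tracked. Using one set of pivot times, the almost-additivity (\ref{pivot:almost-ad}) and alignment (\ref{pivot:align}) properties should hold simultaneously for every $\Delta\overline\kappa_k$ with $1\le k\le d$, because pivotal alignment is ultimately a statement about the transversality of successive singular frames in $E$, and this transversality descends to all exterior powers upon enlarging the universal constant $C$. This decomposes $\Delta\overline\kappa(\widetilde\gamma_{0,n})-n\overline\delta(\nu)\in\RR^d$ as a sum of approximately i.i.d. $\RR^d$-valued block increments with a bounded error. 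The integration-by-parts estimate following Theorem \ref{th:Delta-unif}, applied in each exterior power, shows that these block increments have finite second moment in $\RR^d$ from only the first-moment hypothesis $\EE(N(\gamma_0))<+\infty$ --- this is the essential doubled-exponent gain. A vector-valued Lindeberg--L\'evy CLT, combined with an $L^2$ Skorokhod-type embedding, then provides a joint coupling with a centred Gaussian vector $\bar y_n\in\RR^d$.

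The conclusion for $\dot\kappa$ follows by applying the bounded linear map $L:\RR^d\to\RR^d,\;L(u)_k=u_k-u_{k-1}$, which sends $\overline\kappa$ to $\dot\kappa$ and $\overline\delta(\nu)$ to $\dot\delta(\nu)$ while preserving $L^2$-couplings. Setting $y_n=L\bar y_n\sim\mathcal{N}_a$ with $a=L\,\mathrm{Cov}(\bar y_0)\,L^T$ yields \eqref{clt-high}. The main obstacle I expect is this joint pivoting step: verifying that one run of the pivoting of Theorem \ref{th:pivot} simultaneously produces bounded $\Delta\overline\kappa_k$ at every pivot for every $k$, rather than $d$ independent pivotings (which would only produce marginal CLTs without any joint coupling). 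This should reduce to checking that the Schottky and alignment conditions of Theorem \ref{th:pivot} transport through the exterior-power functor with constants that can be chosen uniformly in $k$.
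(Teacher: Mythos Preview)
Your route is both more complicated than the paper's and contains a real error.

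The paper does not apply the rank-one result coordinate by coordinate and then glue. Instead it works directly with the $\RR^d$-valued process $S_{m,n}=\Delta\dot\kappa(\widetilde\gamma_{m,n})$: Lemma~\ref{lem:ellq-higher} shows this process is $L^2$-almost-additive once $\EE(N(\gamma_0))<\infty$, and since $S_0=0$ trivially has finite second moment, the abstract CLT for almost-additive mixing processes in a Hilbert space (Theorem~\ref{th:almost-clt}) gives the joint Gaussian coupling in $\RR^d$ in one shot. No joint pivoting, no combination of marginal couplings, no linear map $L$ at the end.

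Your claim that proximality of $\bigwedge^k_*\nu$ follows from total irreducibility is false: total irreducibility means only that every $\bigwedge^k_*\nu$ is strongly irreducible, not proximal. The paper handles this carefully in the proof of Lemma~\ref{lem:ellq-higher} by introducing the set $\Theta(\nu)$ of indices $k$ for which $\bigwedge^k\nu$ \emph{is} proximal (where Lemma~\ref{lem:ellq-almost} applies) and using the Guivarc'h--Raugi remark that for $k\notin\Theta(\nu)$ the gap $\kappa_k-\kappa_{k+1}$ is deterministically bounded on $\Gamma_\nu$; the non-proximal coordinates are then controlled by interpolation between adjacent elements of $\Theta(\nu)\cup\{0,d\}$. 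Your argument would need this same dichotomy before Theorem~\ref{th:clt-Delta} can be invoked at all.

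The joint-pivoting obstacle you flag is real for your approach and is simply absent from the paper's: Theorem~\ref{th:almost-clt} is proved not via pivotal blocks but via the dyadic dichotomy formula (Lemma~\ref{lem:dichotomy}), which decomposes $\overline S_n$ into sums of i.i.d.\ increments at each scale. The pivoting is used only upstream, in Lemma~\ref{lem:ellq-higher}, and only to produce the uniform scalar bounds $\EE(|\Delta\overline\kappa_k(\gamma_{l,m},\gamma_{m,n})|^2)\le C_k$ for $k\in\Theta(\nu)$ --- these are separate pivotings in separate exterior powers, never required to share pivot times.
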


	Using Theorems \ref{th:wlln-high} and \ref{th:clt-high}, we prove an analogous of Theorem \ref{th:loi-stable} for totally irreducible distributions.
	
	An interesting fact is that in this case, we do not have an analogous of Theorem \ref{th:GCLT-1} with moment assumptions on $N$.
	That is because $N = \kappa_1 - \kappa_d$.
	So when $\dot\kappa$ is in the domain of attraction of a probability measure of stability parameter $\alpha$, $N$ has a finite moment of order $q$ for all $q < \alpha$.
	We remind that by a remark of Guivarc'h and Raugi in \cite{GR89} under the strong irreducibility assumption, saying that a semi-group is not proximal is equivalent to saying that the quantity $\kappa_1 -\kappa_2$ is bounded on $\Gamma$.

	\begin{Th}[Generalized Central Limit Theorem in higher rank]\label{th:GCLT-Higher}
		Let $\nu$ be a totally irreducible probability measure on $\mathrm{GL}(E)$. 
		Let $(\gamma_n)_{n \ge 0} \sim \nu^{\otimes\NN}$.
		Assume that $\dot\kappa_* \nu$ has no moment of order $2$.
		Assume that there exist sequences $(a_n) \in \RR_{> 0}^\NN$ and $(b_n) \in (\RR^d)^\NN$ such that the law of $(\sum_{i = 0}^{n - 1}\dot\kappa(\gamma_i) - b_n)/{a_n}$ converges in the weak-$*$ topology to a non-degenerate probability distribution $\mathcal{L}$ on $\RR^d$.
		Then there exist a constant $b \in \RR^d$ such that the law of $(\dot\kappa(\overline\gamma_n) - b_n - nb)/{a_n}$ converges to $\mathcal{L}$ in the weak-$*$ topology.
	\end{Th}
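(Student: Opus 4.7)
The plan is to mirror the proof strategy of Theorem \ref{th:loi-stable}. I decompose
\[
\frac{\dot\kappa(\overline\gamma_n) - b_n - nb}{a_n} \;=\; \frac{\sum_{i=0}^{n-1}\dot\kappa(\gamma_i) - b_n}{a_n} \;+\; \frac{\Delta\dot\kappa(\widetilde\gamma_{0,n}) - nb}{a_n}.
\]
The first summand converges in law to $\mathcal{L}$ by hypothesis, so by Slutsky's lemma it is enough to exhibit a constant $b\in\RR^d$ for which the second summand converges to $0$ in probability.

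First I extract the stability parameter. Since $\mathcal{L}$ is non-degenerate there exists $\theta\in\RR^d$ such that the push-forward of $\mathcal{L}$ along $x\mapsto\theta^Tx$ is non-degenerate on $\RR$. Projecting the convergence hypothesis onto $\theta$ shows that the scalar partial sums $(\theta^T\sum_{i}\dot\kappa(\gamma_i)-\theta^T b_n)/a_n$ converge to a non-degenerate limit on $\RR$, which by Feller's classical characterization is $\alpha$-stable for some $\alpha\in(0,2]$, and forces $a_n=n^{1/\alpha}L(n)$ with $L$ slowly varying. The paragraph preceding the theorem statement then gives $\EE(N(\gamma_0)^q)<+\infty$ for every $q<\alpha$, which is the moment input I will feed into Theorems~\ref{th:wlln-high} and \ref{th:clt-high}.

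Next I bound the discrepancy $\Delta\dot\kappa(\widetilde\gamma_{0,n})/a_n$. If $\alpha<2$, I select $q_0\in(\alpha,\min(2,2\alpha))$, a non-empty interval, so that $\EE(N(\gamma_0)^{q_0/2})<+\infty$. Theorem \ref{th:wlln-high} then yields
\[
\EE\left(\left\|\frac{\Delta\dot\kappa(\widetilde\gamma_{0,n}) + nb'}{n^{1/q_0}}\right\|^{q_0}\right)\longrightarrow 0,
\]
where $b'=-\dot\delta(\nu)$ when $q_0\ge 1$ (with $\dot\delta(\nu)$ automatically well defined and finite) and $b'=0$ when $q_0<1$. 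Since $1/q_0<1/\alpha$, we have $n^{1/q_0}/a_n\to 0$, so the same convergence holds with $a_n$ in place of $n^{1/q_0}$, giving convergence to $0$ in probability. If $\alpha=2$, non-degeneracy of $\mathcal{L}$ together with the infinite second moment of $\dot\kappa_*\nu$ forces $L(n)\to+\infty$, so $\sqrt n/a_n\to 0$; since $\EE(N(\gamma_0))<+\infty$, Theorem \ref{th:clt-high} provides tightness of $(\Delta\dot\kappa(\widetilde\gamma_{0,n})-n\dot\delta(\nu))/\sqrt n$, which after dividing by $a_n/\sqrt n\to+\infty$ yields convergence to $0$ in probability. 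Setting $b=-b'$ (that is, $b=\dot\delta(\nu)$ whenever $\alpha\ge 1$ and $b=0$ when $\alpha<1$) and applying Slutsky closes the argument.

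The main obstacle is the case-dependent book-keeping around the auxiliary exponent $q_0$: it must simultaneously satisfy $q_0/2<\alpha$ (for the moment hypothesis of Theorem~\ref{th:wlln-high}), $1/q_0<1/\alpha$ (so that the $n^{1/q_0}$-rate beats the slowly varying factor in $a_n$), and land in the correct sub-case governing the centring $b'$. The non-emptiness of $(\alpha,\min(2,2\alpha))$ for every $\alpha\in(0,2)$, the argument that $L(n)\to+\infty$ when $\alpha=2$, and the mild ambiguity in $b$ at the boundary $\alpha=1$ (where shifting $b_n$ by $O(a_n)$ does not affect the limit) are the main delicate points, but none require ideas beyond standard stable-law theory once the weak laws of Theorems~\ref{th:wlln-high} and \ref{th:clt-high} are granted.
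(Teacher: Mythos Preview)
Your proposal is correct and follows essentially the same route as the paper: the same decomposition $\dot\kappa(\overline\gamma_n)=\sum\dot\kappa(\gamma_i)+\Delta\dot\kappa(\widetilde\gamma_{0,n})$, the same case split $\alpha<2$ versus $\alpha=2$, and the same appeals to Theorems~\ref{th:wlln-high} and~\ref{th:clt-high} to kill the error term after observing that $N=\kappa_1-\kappa_d$ inherits a $q$-th moment for every $q<\alpha$ from $\dot\kappa_*\nu$. The only cosmetic difference is that the paper packages the stable-law facts (finite $q$-th moments for $q<\alpha$, $n^{1/q}/a_n\to 0$ for $q>\alpha$, and $\sqrt n/a_n\to 0$ when $\alpha=2$ with infinite variance) into a black-box Lemma~\ref{lem:stable-law} and couples with $y_n\sim\mathcal L$ to obtain $L^q$-convergence, whereas you extract $\alpha$ via a one-dimensional projection and conclude by Slutsky; since the theorem only claims weak-$*$ convergence, your lighter conclusion suffices.
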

	
	For the present article we do not need a fine understanding of the domain of attraction of stable distributions.
	All we need to know is that when the hypotheses of Theorem \ref{th:GCLT-Higher} hold, there exists $\alpha \in (0, 2]$ such that $\dot\kappa_*\nu$ both has a finite moment of order $q$ for all $q \in (0, \alpha)$ and $a_n / n^{1/q} \to 0$ for all $q \in \{2\} \cup (\alpha, +\infty)$.
	The fact that $\sqrt{n}/a_n \to 0$ when $\alpha = 2$ comes from the fact that $\dot\kappa_* \nu$ has no moment of order $2$.
	When $\dot\kappa_* \nu$ has a finite second order moment the technique developed in \cite{CLT16} tell us that the law of $(\dot\kappa(\overline\gamma_n) - n \lambda_1)/\sqrt{n}$ converges to a Gaussian distribution whose covariance matrix is given by a cocycle formula, analogous to \eqref{cocycle}, that we will not detail in the present article.
	
	That being said, in order to gain a basic understanding of the implications of Theorem \ref{th:GCLT-Higher}, let us remind that a law $\mu$ on $\RR^d$ is in the domain of attraction of a stable law, of stability parameter $\alpha$, in the sense that there exist sequences $(a_n) \in \RR_{>0}$ and $(b_n)_{n > 0}$ such that $(x\mapsto (x-b_n)/a_n)_*\nu^{*n} \rightharpoonup \mathcal{L}$ if and only if:
	\begin{itemize}
		\item When $\alpha = 2$, for all $t > 0$, the covariance matrix of the restriction
		\footnote{Since $\mu\{x,\|x\|< a_n\}$ has limit $1$, it does not matter if we look at the restriction or normalized restriction of $\mu$. 
		We remind that the coefficient of index $(i,j)$ of the covariance matrix of the restriction of $\mu$ to $\mathcal{B}(ta_n)$ is $\int_{\mathcal{B}(ta_n)} x_i x_j d\mu(x) - \int_{\mathcal{B}(ta_n)} x_i d\mu(x) \int_{\mathcal{B}(ta_n)} x_j d\mu(x)$. Integrating over all $\RR^d$ on the second term does not change the limit.} 
		of $\mu$ to the ball of radius $t a_n$ multiplied by $n/(a_n)^2$ converges to the covariance matrix of $\mathcal{L}$ and $(b_n - n \int x d\mu(x))/ a_n$ converges to $\int x d\mathcal{L}(x)$. 
		In this case $\mathcal{L}$ is Gaussian.
		
		\item When $\alpha < 2$ the measure\footnote{The measure $n(x \mapsto  x/a_n)_* \mu$ is $n$ times the law of $x / a_n$ for $x \sim \mu$, it has total mass equal to $n$ on $\RR^d$ and to $n (1- \mu\{0\})$ on $\RR^d \setminus\{0\}$, so its limit is zero or has infinite total mass on $\RR^d\setminus\{0\}$.} $n(x \mapsto x/a_n)_* \mu$ converges to a non-zero and locally finite measure, for the weak-$*$ topology on $\RR^d \setminus \{0\}$, and $b_n - n \int_{\mathcal{B}(a_n)})/a_n$ converges in $V$. 
		
		In this case, the limit of $n(x \mapsto x/a_n)_* \mu$ is of type $C\mathcal{P}_\alpha *\beta$, where $\beta$ is a probability measure on the unit sphere, $\mathcal{P}_\alpha$ is the law of density $\alpha t^{-\alpha - 1} dt$ on $(0, + \infty)$ and $C \in (0, + \infty)$ is a constant.
		The law $\mathcal{L}$ is determined by the stability parameter $\alpha$, the measure $\beta$ on the unit sphere (called harmonic measure of $\mathcal{L}$), the constant $C$ and the limit of $b_n - n \int_{\mathcal{B}(a_n)})/a_n$.
		
		More precisely, $\mathcal{L}$ is the law of:
		\begin{equation}
			b + a \sum_{k = 1}^{\infty} \left(\overline{\tau}_k^{-1/\alpha} y_k - \int_{k}^{k+1}t^{-1/\alpha} dt \EE(y_k)\right).
		\end{equation}
		where $a = C^{1/\alpha}$, $b = \lim_n b_n - n \int_{\mathcal{B}(a_n)})/a_n$ and $(\tau_k)_{k \ge 0}, (y_k)_{k \ge 1} \sim \mathcal{E}^{\otimes\NN} \otimes \beta^{\otimes\NN}$ for $\mathcal{E}$ the exponential distribution\footnote{This way, the set $\{\overline{\tau}_k, k \ge 1\}$ is a Poisson point process of intensity Lebesgue on $(0, +\infty)$ and the set $\{a\overline{\tau}_k^{-1/\alpha}y_k, k \ge 1\}$ is a Poisson point process of intensity $C\mathcal{P}_\alpha *\beta$ on $\RR^d \setminus\{0\}$.
		The distribution of the Poisson point process of intensity $\lim_n n(x \mapsto x/a_n)_* \mu$ is the limit of the distribution of the set $\{x_k /a_n\,|\, 0\le k < n\}$ for $(x_k)_{k \ge 0} \sim \nu^\otimes\NN$.} on $(0, + \infty)$ and $\overline\tau_k = \sum_{j =0}^{k-1} \tau_j$.
		In the case of Theorem \ref{th:GCLT-Higher}, the harmonic measure $\beta$ associated to $\mathcal{S}$ has to be supported on the Weil chamber $\{x\in\RR^d\,|\, x_1 \ge x_2 \ge \dots \ge x_d\}$ because $\kappa_* \nu$ is.
	\end{itemize} 
	
	Following the historic framework developed by Lévi in the one-dimensional case, the domain of attraction of multi-dimensional stable distributions can be understood via the Fourier transform.
	However, the harmonic measure $\beta$ does not come out as explicitly as it does via the study of order statistics, developed in \cite{series} and explained in the introduction of \cite{Samorodnitsky1995StableNR}.
	This point of view allows for a more intuitive and dimension-independent understanding of the domain of attraction of stable distributions.


	\section{Pivoting technique and gain of moment}\label{section:pivot}
	
	This section is dedicated to the proof of Theorem \ref{th:Delta-unif}.
	We first prove Theorem \ref{th:pivot} as a corollary of \cite[Theorem~4.7]{moi}.
	Then we prove Theorem \ref{th:Delta-unif} as a corollary of both Lemma \ref{lem:Delta} and Theorem \ref{th:Delta-unif}.
	Lemma \ref{lem:Delta} combined with \cite[Theorem~4.7]{moi} allow to give results that are similar to Theorem \ref{th:Delta-unif} for random walks in spaces that satisfy good contraction properties like trees or relatively hyperbolic groups.
	The reader who is not familiar with \cite[Section~4]{moi} may skip the proof of Theorem \ref{th:pivot} in a first read and proceed to Section \ref{section:tail} of the present article for the proof of Theorem \ref{th:Delta-unif}.

	\subsection{Pivoting technique}
	
	The aim of the present section is to prove Theorem \ref{th:pivot}, using the tools developed in \cite{moi}. 
	This is the occasion to give more details on the theoretical framework behind Theorem \ref{th:pivot} and put emphasis on the constructive nature of the proof.
	
	Let us quickly redefine the notations used in \cite[Section~4]{moi} and in the present paragraph.
	Given a semi-group $\Gamma$, a sequence $(g_n)_{n \ge 0} \in \Gamma^\NN$, and a sequence of integers $(p_n)_{n \ge 0} \in\NN^\NN$, for all $n \ge 0$, we write $\overline{p}_n = p_0 + \cdots + p_{n-1}$ and $\widetilde{g}^p_n = (g_{\overline{p}_n}, \dots, g_{\overline{p}_{n+1}-1})$ and $g^p_n = g_{\overline{p}_n} \cdots g_{\overline{p}_{n+1}-1}$.
	Given a word $\tilde{g} = (g_0, \dots, g_{l-1})$, we write $\Pi(\tilde{g}) = g_0 \cdots g_{l-1} = g_{0,l}$.
	Given two indices $m < n$ and a sequence $(g_n)$ in a semi-group, we write $g_{m,n}$ as a compact notation for $g_m \cdots g_{n-1}$. 
	That way, for all $l < m < n$, we have $g_{l,n} = g_{l,m}  g_{m,n}$. 
	
	We call metric semi-group a semi-group endowed with a second countable and metrizable topology, in practice, we think of $\Gamma$ as $\mathrm{GL}(E)$.
	Given a measurable binary relation $\AA$ on $\Gamma$ and $0 < \rho < 1$, we say that a probability measure $\nu_s$ is $\rho$-Schottky if for all constant $g \in \Gamma$, and for $\gamma \sim \nu_s$, we have:
	\begin{equation}\label{eq:def-schottky}
		\PP(g \AA \gamma) \ge 1 - \rho \quad \text{and} \quad \PP(\gamma \AA g) \ge 1 - \rho.
	\end{equation}
	Given two matrices $g$ and $h$ and $0 < \eps \le 1$, we write $g \AA^\eps h$ when $\|gh\| \ge \eps \|g\| \|h\|$, or equivalently, when $\Delta\kappa(g,h) \le |\log(\eps)|$.
	For all matrix $g \in \mathrm{GL}(E)$, we write $\sigma(g) = \exp(\kappa_2(g)-\kappa_1(g)) \in [0,1]$.
	
	In \cite{moi}, we have seen the following result:
	
	\begin{Th}[Pivotal times~{\cite[Theorem~1.6]{moi}}]\label{th:pivot-prequel}
		Let $\nu$ be a strongly irreducible and proximal probability distribution over $\mathrm{GL}(E)$.
		There exist constants $0 < \varepsilon \le 1/2$ and $m \in\mathbb{N}$, a compact $K \subset \mathrm{GL}(E)$ and a probability distribution $\mu$ on $\mathrm{GL}(E)^\mathbb{N} \times (\mathbb{N}\setminus\{0\})^\mathbb{N}$ such that given $\left(\left(\gamma_n\right)_{n \ge 0},\,\left(p_n\right)_{n\ge 0}\right) \sim \mu$, the following assertions hold.
		\begin{enumerate}
			\item We have $\left(\gamma_n\right)_{n \ge 0} \sim \nu^{\otimes\mathbb{N}}$. \label{pivot:loi}
			\item The sequence $\left(\widetilde{\gamma}^p_{2k}\right)_{k \ge 1}$ is i.i.d. and independent of $\widetilde{\gamma}^p_0$. \label{indep}
			\item For all $k \ge 0$, we have $\widetilde{\gamma}^p_{2k+1} \in K^m$ and $\sigma(\gamma^p_{2k+1}) \le \frac{\varepsilon^6}{48}$ almost surely.\label{pivot:compact-sqz}
			\item For all $k \ge 0$, the conditional distribution of $\widetilde{\gamma}^p_{2k+1}$ with respect to the data of $\left(\widetilde{\gamma}^p_{k'}\right)_{k' \neq 2k + 1}$ is given by a function of $(\widetilde{\gamma}^p_{2k},\widetilde{\gamma}^p_{2k+2})$ that does not depend on $k$.\label{pivot:condition}
			\item Almost surely, and for all $k \ge 0$, we have $\mathbb{P}\left( g \mathbb{A}^\varepsilon \gamma^p_{2k+1} \, \middle| \, (\widetilde{\gamma}^p_{k'})_{k' \neq 2k+1} \right) \ge 3/4$ for all $g$ and $\mathbb{P}\left(\gamma^p_{2k+1} \mathbb{A}^\varepsilon h \, \middle| \, (\widetilde{\gamma}^p_{k'})_{k' \neq 2k+1} \right) \ge 3/4$ for all $h$.\label{schottky}
			\item There exists constants $C, \beta > 0$ such that $\mathbb{E}(e^{\beta p_n}) \le C$ for all $n \ge 0$.\label{pivot:exp-moment}
			\item For all $A \subset \mathrm{GL}(E) \setminus K$, we have $\mathbb{P}\left(\gamma_n \in A \, \middle| \, \left(p_k\right)_{k \in\mathbb{N}}\right) \le 2\nu(A)$.\label{pivot:tail}
			\item For all $i \le j \le k$ such that $i < k$, for all $f \in E^*\cup\mathrm{End}(E)$ such that $f \mathbb{A}^\varepsilon \gamma^p_{i}$ and for all $h \in E \cup \mathrm{End}(E)$ such that $\gamma^p_{k - 1} \mathbb{A}^\varepsilon h$, we have $f \gamma^p_{i} \cdots \gamma^p_{j-1} \mathbb{A}^\frac{\varepsilon}{2} \gamma^p_j \cdots \gamma^p_{k} h$.\label{pivot:herali}
		\end{enumerate}
	\end{Th}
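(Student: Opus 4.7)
The plan is to prove Theorem \ref{th:pivot-prequel} by first extracting a Schottky structure from the assumption that $\nu$ is strongly irreducible and proximal, and then running a scanning/coupling scheme on the i.i.d. sequence $(\gamma_n)\sim\nu^{\otimes\NN}$ that isolates well-chosen blocks as pivots. For the Schottky input, I would use the standard Gol'dsheid--Margulis / Benoist density argument to find an integer $m$, a compact $K\subset\mathrm{GL}(E)$, a constant $0<\varepsilon\le 1/2$, and two Borel sets $S,T\subset K^m$ with positive $\nu^{*m}$-mass, such that any alternating concatenation of words drawn from $T$-side and $S$-side has a very proximal product $g$ with $\sigma(g)\le\varepsilon^6/48$, and with attracting direction and repelling hyperplane in sufficiently general position that both the ping-pong inequalities $g\AA^\varepsilon h$ and $h'\AA^\varepsilon g$ hold for a $3/4$-fraction of possible neighbours $h,h'$. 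This produces a $\rho$-Schottky family in the sense of \eqref{eq:def-schottky} with $\rho\le 1/4$.

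Second, I would construct the pair $(\gamma_n,p_n)$ via a blockwise scanning procedure. Cut $(\gamma_n)$ into consecutive windows of length $m$; at each window, decide — based on the value of the window and an independent Bernoulli toss whose acceptance probability is carefully chosen — whether to mark the window as a pivot. The pivots define the odd-indexed blocks $\widetilde\gamma^p_{2k+1}$, the material between two consecutive pivots defines the even-indexed blocks $\widetilde\gamma^p_{2k}$, and the block lengths give $(p_n)$. The acceptance probability is tuned by a Doeblin-style coupling so that integrating out the Bernoulli tosses restores the marginal $(\gamma_n)\sim\nu^{\otimes\NN}$ (property \eqref{pivot:loi}), while conditionally on being accepted the odd block has a law supported in the Schottky set $S$ (giving properties \eqref{pivot:compact-sqz} and \eqref{schottky}). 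Since the scanning procedure is memoryless and renews at each accepted pivot, the even blocks $\widetilde\gamma^p_{2k}$ are i.i.d.\ for $k\ge 1$ and independent of $\widetilde\gamma^p_0$ (property \eqref{indep}), and the conditional law of $\widetilde\gamma^p_{2k+1}$ given the rest depends only on its two neighbouring even blocks through the acceptance weight (property \eqref{pivot:condition}).

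Third, I would verify the remaining items. Property \eqref{pivot:exp-moment} follows from a standard Borel--Cantelli / gambler's ruin estimate: because each window has a fixed positive probability of being accepted, the gap $p_n$ has geometric and therefore exponential tail. Property \eqref{pivot:tail} is immediate from the fact that deciding whether a window is a pivot can only use a bounded amount of information, so conditioning on $(p_k)$ inflates the marginal of a non-compact letter $\gamma_n\notin K$ by at most a factor $2$. Finally, the alignment property \eqref{pivot:herali} is the ping-pong chaining lemma: once $f\AA^\varepsilon\gamma^p_i$ on the left and $\gamma^p_{k-1}\AA^\varepsilon h$ on the right are realized, then by induction along the odd pivots the successive alignments compose, losing at most a factor $2$ in $\varepsilon$, which is the combinatorial engine that makes the pivotal method work.

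The main obstacle — and the technical core of \cite[Theorem~4.7]{moi}, which the present statement simply specializes to $(\mathrm{GL}(E),\nu)$ — is reconciling three requirements of the coupling simultaneously: the marginal must remain exactly $\nu^{\otimes\NN}$, the odd blocks must be \emph{conditionally} $\rho$-Schottky with $\rho\le 1/4$ \emph{regardless} of the conditioning on the rest, and the gap lengths must have a uniform exponential tail. The first two are in tension because forcing the odd block into $S$ perturbs the marginal; one resolves this by resampling via the Doeblin coupling and absorbing the perturbation into the next even block. Verifying that this resampling is compatible with the uniform $3/4$ Schottky lower bound in \eqref{schottky} requires the quantitative ping-pong estimates underlying the Schottky construction, and is the step where the proof really cannot be shortcut.
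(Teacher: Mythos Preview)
Your outline is broadly aligned with how the result is obtained, but note that the paper does not prove this theorem at all: it is quoted verbatim from \cite[Theorem~1.6]{moi}, and the paper only remarks that it follows from the black-box pivot extraction \cite[Theorem~4.7]{moi} (restated here as Theorem~\ref{th:ex-piv}) applied with the Schottky measure of Lemma~\ref{lem:schottky}. So the paper's ``proof'' is: invoke Lemma~\ref{lem:schottky} to produce $\varepsilon,m,\alpha,\tilde\nu_s$ with compact support $K^m$, then feed these into Theorem~\ref{th:ex-piv} and read off items \eqref{pivot:loi}--\eqref{pivot:herali} from items \eqref{item:v-indep}--\eqref{item:tjr-piv} there. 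Your sketch is essentially an outline of how \cite[Theorem~4.7]{moi} itself is proved, which is one level deeper than what the present paper does.

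Two points where your sketch diverges from the actual construction. First, the Schottky input is credited in the paper to Abels--Margulis--Soifer \cite{AMS-esmigroup} rather than Gol'dsheid--Margulis/Benoist; more importantly, it is packaged not as two sets $S,T$ but as a single measure $\tilde\nu_s$ with $\alpha\tilde\nu_s\le\nu^{\otimes m}$ and $\Pi_*\tilde\nu_s$ $1/6$-Schottky. Second, and more substantively, your one-pass scanning (accept/reject each window by a Bernoulli toss, accepted windows become pivots) is an oversimplification: in Theorem~\ref{th:ex-piv} there are \emph{three} nested selection levels --- the geometric clock $(w_k)$ picks candidate windows, then $(v_k)$ discards candidates that fail the local alignment $\gamma^{\check v}_{4k}\AA\gamma^{\check v}_{4k+1}\AA\gamma^{\check v}_{4k+2}$, and finally $(p_k)$ performs a further backtracking selection governed by the auxiliary random walk described after the proof of Theorem~\ref{th:pivot}. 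A single acceptance step cannot simultaneously guarantee the conditional Schottky bound \eqref{schottky} (which requires the pivot to align with its actual neighbours, not just be intrinsically proximal) and keep the marginal exactly $\nu^{\otimes\NN}$; the multi-level structure is precisely what absorbs the rejection bias into the even blocks while preserving independence. Your justification of \eqref{pivot:tail} (``bounded information'') and \eqref{pivot:exp-moment} (``fixed acceptance probability'') are therefore too quick: both rely on the explicit independence statements \eqref{item:v-indep}, \eqref{item:tail} and the random-walk description of $(p_k)$, not on a naive renewal argument. You correctly identify this tension in your final paragraph, but the resolution you gesture at (``absorb the perturbation into the next even block'') is exactly the content of the multi-level construction you omitted.
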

	
	This result is weaker than Theorem \ref{th:pivot}, which moreover states that the data of $(p_n)_{n \ge 0}$ and of $(\gamma_n)_{n \ge 0, \gamma_n \notin K}$ are independent relative to the data of $\{n \ge 0\,|\, \gamma_n \notin K\}$.
	In Theorem \ref{th:pivot-prequel}, point \eqref{pivot:tail} only says that the relative distribution of each $\gamma_n$ with respect to the data of $(p_k)_{k \ge 0}$ is absolutely continuous with respect to $\nu$ outside of $K$ and does not say anything about the coupling between the $(\gamma_n)_n$'s. 
	To get the squared probability in Theorem \ref{th:Delta-unif}, we really need the independence of the $g_n$'s.
	
	We remind that the sequence $(p_n)_n$ of Theorem \ref{th:pivot-prequel} was taken to be the sequence $(\check{p}_n)_n$, constructed in \cite[Theorem~4.7]{moi}.
	The aim of the present section is to show that this sequence $(\check{p}_n)_n$ does satisfy the conclusions of Theorem \ref{th:pivot}.
	
	Before giving the statement of \cite[Theorem~4.7]{moi}, let us remind that the Schottky measure we want to consider is given by the following Lemma.
	This result is a probabilistic formulation of a well known geometric result, credited to Abels, Margulis and Soifert \cite{AMS-esmigroup}.
	Here we state it as it is stated and proven in \cite[Corollary~3.17]{moi}, with $\delta(\eps) = \eps^6 / 48$ and $\rho = 1 / 6$.

	\begin{Lem}\label{lem:schottky}
		Let $\nu$ be a strongly irreducible and proximal probability measure on $\Gamma$. 
		There exists an integer $m$, two constants $\alpha, \eps \in (0,1)$ and a probability measure $\tilde\nu_s$ on $\Gamma^m$ such that:
		\begin{enumerate}
			\item The measure $\nu_s = \Pi_*\tilde\nu_s$ is $1/6$-Schottky for $\AA^\eps$ in the sense of \eqref{eq:def-schottky}.
			\item The measure $\tilde\nu_s$ is absolutely continuous with respect to $\nu^{\otimes m}$ in the sense that $\alpha \tilde\nu_s \le \nu^{\otimes m}$.
			\item For all $(s_0, \dots, s_{m-1})$ in the support of $\tilde\nu_s$, we have $\sigma({s}_{0,m}) \le \eps^6/48$.
			\item The support of $\tilde\nu_s$ is compact in $\Gamma^m$.
		\end{enumerate}
	\end{Lem}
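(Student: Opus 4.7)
The plan is to deduce Lemma~\ref{lem:schottky} from the classical Abels--Margulis--Soifer construction of families of very proximal elements in general position, transferred to the probabilistic setting via small neighborhoods in the support of $\nu^{\otimes m}$.

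First, I would invoke strong irreducibility and proximality of $\nu$ to produce, through the Abels--Margulis--Soifer lemma, a finite family $h_1, \dots, h_r \in \Gamma_\nu$ of very proximal elements with $\sigma(h_i) < \eps^6/96$ for a suitable small constant $\eps$, whose attracting points $x_i^+ \in \PP(E)$ and repelling hyperplanes $H_i^- \in \PP(E^*)$ are in quantitatively general position: there exists $\eta > 0$ such that every projective hyperplane of $E$ contains at most $r/12$ of the $x_i^+$ up to distance $\eta$, and symmetrically every projective line of $E$ lies in at most $r/12$ of the $H_i^-$ up to distance $\eta$. This is the standard output of the AMS construction once one conjugates a single very proximal element by enough generic elements of $\Gamma_\nu$ and, if needed, replaces each $h_i$ by a large power to make $\sigma(h_i)$ small.

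Next, since $\Gamma_\nu = \bigcup_n \mathrm{supp}(\nu^{*n})$, each $h_i$ is approximated arbitrarily well by products $s_0 \cdots s_{m_i - 1}$ with $(s_0, \dots, s_{m_i - 1}) \in \mathrm{supp}(\nu^{\otimes m_i})$. Appending a common tail drawn from $\mathrm{supp}(\nu^{\otimes k})$ to the shorter words, one may assume all lengths equal a single integer $m$, producing words $\widetilde{s}^{(i)} \in \Gamma^m$. By continuity of $\sigma$ and of the singular vector decomposition, one chooses relatively compact open neighborhoods $V_i$ of $\widetilde{s}^{(i)}$ with $\nu^{\otimes m}(V_i) > 0$ small enough that every $\widetilde{s} \in V_i$ satisfies $\sigma(\Pi(\widetilde{s})) \le \eps^6/48$ and has attracting (resp.\ repelling) data within $\eta/2$ of $x_i^+$ (resp.\ of $H_i^-$). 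Setting
\begin{equation*}
\widetilde{\nu}_s \;:=\; \frac{1}{r}\sum_{i=1}^{r} \frac{\mathds{1}_{V_i}}{\nu^{\otimes m}(V_i)}\,\nu^{\otimes m},
\end{equation*}
items (2), (3), (4) are immediate: one may take $\alpha := \min_i \nu^{\otimes m}(V_i)/r$; the support lies in the compact set $\bigcup_i \overline{V_i}$; and the bound on $\sigma$ holds pointwise by construction.

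The main obstacle is item (1), the $1/6$-Schottky property. The key is to translate $\Delta\kappa(g,\gamma) \le |\log \eps|$ into geometric language: thanks to the smallness of $\sigma(\gamma)$, this inequality fails only when the top right singular direction of $g$ lies within an $\eps$-dependent neighborhood of the repelling hyperplane of $\gamma$. For $\gamma = \Pi(\widetilde{s})$ with $\widetilde{s} \in V_i$, this can happen only for those indices $i$ such that $H_i^-$ lies in the $\eta$-neighborhood of that fixed direction of $g$, and by the quantitative genericity of Step~1 these are at most $r/12$ in number; since each index has weight $1/r$ under $\widetilde{\nu}_s$, we obtain $\PP(g \AA^\eps \gamma) \ge 1 - 1/12 \ge 5/6$. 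The symmetric bound $\PP(\gamma \AA^\eps g) \ge 5/6$ follows by exchanging the roles of attracting direction and repelling hyperplane (working in $E^*$). The delicate point is the simultaneous calibration of $\eps$, $\eta$, the sizes of the $V_i$, and the cardinality $r$, so that the geometric $\eps$-thickening used in the Schottky criterion is consistent with the $\eta$-thickening used in the genericity of the AMS family; this is the step that drives all the implicit constants and is where I would expect most of the technical bookkeeping to concentrate.
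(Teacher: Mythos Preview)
The paper gives no self-contained proof here: it simply records the lemma as \cite[Corollary~3.17]{moi} and credits the underlying geometry to Abels--Margulis--Soifer. Your sketch is exactly the AMS-based construction that underlies that reference, so in substance you are reproducing the paper's (outsourced) argument.

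Two small points of care. First, in your analysis of when $g\,\AA^\eps\,\gamma$ fails you have the roles swapped: since $\gamma$ is the very proximal factor, $\|g\gamma\|/(\|g\|\|\gamma\|)\approx \|g\,u_1(\gamma)\|/\|g\|$, so failure occurs when the \emph{attracting direction} $x_i^+=u_1(\gamma)$ falls near the fixed hyperplane $v_1(g)^\perp$. That is controlled by your first genericity hypothesis on the $x_i^+$; it is the dual bound $\gamma\,\AA^\eps\,g$ that is governed by the repelling hyperplanes $H_i^-$. You wrote down the correct pair of genericity conditions, just attributed each to the wrong inequality. Second, the ``append a common tail to equalize the lengths $m_i$'' step is more delicate than your one-line treatment suggests: replacing $h_i$ by $h_i t_i$ moves the repelling hyperplane by an $i$-dependent transformation and can destroy the general position you just arranged. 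The standard cure is to first pass to high powers $h_i^{N_i}$ (same attracting/repelling data, $\sigma$ as small as one likes) so that any bounded padding is negligible against the proximality, or to build all the $h_i$ as $a_i\, h\, b_i$ from a single proximal seed $h$ sandwiched between words $a_i,b_i$ of fixed lengths; either way the calibration you flag as ``the delicate point'' is genuinely where the work sits.
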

	
	Now let us give the full statement of \cite[Theorem~4.7]{moi} and give it an alias for the present article.
	Given $\alpha \in (0,1)$, we write $\mathcal{G}_\alpha := (1-\alpha)\sum_{k = 0}^\infty{\alpha^k}\delta_k$ for the geometric distribution of parameter $\alpha$, we remind that $\mathcal{G}_\alpha$ has mean $\alpha/(1-\alpha)$.
	Given two sequences of integers $v = (v_n)$ and $w = (w_n)$, we write $w^v = (\sum_{k = \overline{v}_n}^{\overline{v}_{n+1} - 1}w_{k})_{n \ge 0}$, that way, for all sequence $\gamma = (\gamma_n)_n$ in a semi-group, we have $\gamma^{(w^v)} = (\gamma^w)^v$.
	
	\begin{Th}[Pivot extraction~{\cite[Theorem~4.7]{moi}}]\label{th:ex-piv}
		Let $\Gamma$ be a metric semi-group endowed with a measurable binary relation $\AA$.
		Let $\alpha\in (0,1)$ and let $m \ge 1$.  
		Let $\tilde{\nu}_s$ be a probability distribution on $\Gamma^m$ such that $\alpha \tilde\nu_s \le \nu^{\otimes m}$ and let $\tilde\kappa:= \frac{1}{1-\alpha}(\nu^{\otimes m} - \alpha \tilde\nu_s)$.
		Let $\nu_s = \Pi_*\tilde{\nu}_s$ and assume that $\nu_s$ is $1/6$-Schottky for $\AA$.
		Then there exist random sequences $({u}_k)_k, ({s}_k)_k, (w_{2k})_k \sim \tilde\kappa^{\odot\NN}\otimes \tilde\nu_s^{\odot\NN} \otimes \mathcal{G}_{1-\alpha}^{\otimes\NN}$ and two random sequences $(v_k)_k$ and $(\check{p}_k)_k$ all defined on the same probability space such that if we write $w_{2k+1} := 1$ for all $k$, $\check{w}:= mw$, $(\gamma_n)_{n\in\NN}:= \bigodot_{k=0}^\infty \widetilde{u}^{\check w}_{2k} \widetilde{s}^{\check w}_{2k+1}$, $\check{v}:= \check{w}^v$, $\hat{v}_{2k}:= \check{v}_{4k} + \check{v}_{4k + 1} + \check{v}_{4k + 2}$ and $\hat{v}_{2k+1} = \check{v}_{4k + 3}$ for all $k$ (or in compact notations $\hat{v} = \check{v}^{(3,1)^{\odot\NN}}$) and $p := \hat{v}^p$, then the following assertions hold:
		\begin{enumerate}
			\item \label{item:v-indep}
			The data of $(v_k)_k$ is independent of the joint data of $(\tilde{u}_k)_k$ and $(w_{k})_k$. 
			\item \label{item:law-of-v}
			For all $k \in\NN$, we have $v_{4k+1} = v_{4k+2} = v_{4k+3} = 1$ and $\left(
			\frac{v_{4k}-1}{2}\right)_{k\in\NN} \sim \mathcal{G}_{1/3}^{\otimes\NN}$. 
			\item \label{item:zouli-alignemon}
			For all $k \in\NN$, we have $\gamma^{\check{v}}_{4k}\AA \gamma^{\check{v}}_{4k+1} \AA \gamma^{\check{v}}_{4k+2}$. 
			\item \label{item:hat-v-is-stopping-time}
			We have $(\widetilde{\gamma}^{\check{v}}_{4k+3})_{k\in\NN} \sim \tilde{\nu}_s^{\otimes \NN}$ and the sequence $\left((\widetilde{\gamma}^{\check{v}}_{4k},\widetilde{\gamma}^{\check{v}}_{4k+1}, \widetilde{\gamma}^{\check{v}}_{4k+2})\right)_{k\in\NN}$ is i.i.d. and independent of $(\widetilde{\gamma}^{\check{v}}_{4k+3})_{k\in\NN}$.
			\item \label{item:tail}
			The data of $(p_k)_k$ is independent of the joint data of $(\tilde{u}_k)_k$ and $(w_{k})_k$ and $(v_k)_k$ and $(\widetilde{\gamma}^{\hat{v}}_{k})_{k\in \{0,1,2\}+4\NN}$.
			\item \label{item:indep-p}
			Each $p_k$ is a positive odd integer, $p_{2k+1} = 1$ for all $k$ and $(p_{2k+2})_k$ is i.i.d. and independent of $p_0$.
			\item \label{item:exp-p}
			The distribution laws of $p_0$ and $p_2$ have a finite exponential moment.
			\item \label{item:indep}
			The sequence $\left(\widetilde{\gamma}^{\check{p}}_{2k+2}\right)_{k \ge 0}$ is i.i.d. and independent of $\widetilde{\gamma}^{\check{p}}_{0}$.
			\item \label{item:ali}
			For all $k\in\NN$, we have $\gamma^{\check{p}}_{2k} \AA \gamma^{\check{p}}_{2k+1}$ almost surely.
			\item \label{item:ali-rec}
			Almost surely and for all $k \in\NN$, we have $\gamma^{\check{p}}_{2k+1} \AA \gamma^{\hat{v}}_{\overline{p}_{2k+2}}$ and there is a family of odd integers $1 = c_1^k < c_2^k < \dots < c_{j_k}^k = p_{2k+2}$ such that for all $1\le i <j_k$, we have:
			\begin{equation}\label{eq:ali-rec}
				\gamma^{\hat{v}}_{\overline{p}_{2k+2}}\cdots \gamma^{\hat{v}}_{\overline{p}_{2k+2}+c_i^k-1} \AA \gamma^{\hat{v}}_{\overline{p}_{2k+2}+c_i^k} \AA \gamma^{\hat{v}}_{\overline{p}_{2k+2}+c_i^k + 1} \cdots \gamma^{\hat{v}}_{\overline{p}_{2k+2}+c_{i+1}^k - 1}.
			\end{equation}
			\item For all $k \in\NN$, the conditional distribution of $\widetilde\gamma^{\check{p}}_{2k+1}$ with respect to the joint data of $\left(\widetilde\gamma^{\check{p}}_{k'}\right)_{k'\neq 2k+1}$ and $(\tilde{u}_k)_k$ and $(w_k)_k$ and $(v_k)_k$ is the normalized restriction of $\tilde{\nu}_s$ to the measurable set: \label{item:tjr-piv}
			\begin{equation}
				C_k:= \Pi^{-1}\left\{\gamma \in\Gamma\,\middle|\, \gamma^{\check{p}}_{2k} \AA \gamma^{\check{p}}_{2k+1} \AA \gamma^{\hat{v}}_{\overline{p}_{2k+2}} \right\}.		
			\end{equation}
		\end{enumerate}
	\end{Th}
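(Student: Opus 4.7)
The construction rests on a standard mixture decomposition followed by a two-layer geometric thinning that extracts provably Schottky-aligned blocks. First I would realise the sequence $(\gamma_n)_{n\in\NN}\sim \nu^{\otimes \NN}$ directly from the given data: since $\nu^{\otimes m} = \alpha\tilde\nu_s + (1-\alpha)\tilde\kappa$, sampling a geometric number $w_{2k}\sim \mathcal{G}_{1-\alpha}$ of length-$m$ blocks from $\tilde\kappa$ followed by one length-$m$ block from $\tilde\nu_s$ and concatenating gives back i.i.d.\ length-$m$ blocks of law $\nu^{\otimes m}$, hence $\nu^{\otimes \NN}$ on the individual-matrix level. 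By construction the candidate pivot blocks $\widetilde{s}^{\check w}_{2k+1}$ are exactly the blocks indexed $2k+1$ in the $\check w$-decomposition, they are i.i.d.\ of law $\tilde\nu_s$, and they are independent of $(\tilde u_k)_k$ and $(w_k)_k$. This already delivers a clean statement of an i.i.d.\ pivot candidate structure before any thinning.

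Next I would implement the first thinning layer producing $v$. For each candidate $\tilde s^{\check w}_{2k+1}$, the $1/6$-Schottky property of $\nu_s=\Pi_*\tilde\nu_s$ says that, conditionally on everything else, the probability of a left-misalignment is at most $1/6$ and likewise on the right, so by a union bound the candidate fails to pivot with probability at most $1/3$. I would build $v$ by a rejection rule: walk along $k$ and, at each candidate, check alignment with the preceding product and the following product; if it fails, merge it into the current ``non-pivotal'' chunk and try the next one. This produces a geometric number of failures before a success, giving $(v_{4k}-1)/2\sim \mathcal{G}_{1/3}$; the additional three positions $v_{4k+1}=v_{4k+2}=v_{4k+3}=1$ encode the structural pattern ``(failed left neighbourhood, accepted middle pivot, failed right neighbourhood, extracted pivot)''. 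Independence of $v$ from $(\tilde u_k)_k$ and $(w_k)_k$ follows because the rejection test depends only on the Schottky candidates and neighbouring products, not on the underlying decomposition, and because at each trial the Bernoulli decision can be re-sampled from an independent source once the Schottky property is invoked.

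The alignment properties (\ref{item:zouli-alignemon}) and the i.i.d.\ structure (\ref{item:hat-v-is-stopping-time}) are then inherited from the construction: the block $\gamma^{\check v}_{4k+1}$ is accepted precisely because it aligns on both sides, and the acceptance conditioning leaves its law as the normalized restriction of $\tilde\nu_s$ to the alignment set, whence (\ref{item:tjr-piv}). The second layer $p$ is built in the same spirit, grouping triples $(\check v_{4k},\check v_{4k+1},\check v_{4k+2})$ via $\hat v$ and then further regrouping through an independent geometric variable to guarantee that $\gamma^{\check p}_{2k}\AA\gamma^{\check p}_{2k+1}$ holds deterministically, which produces (\ref{item:indep-p})--(\ref{item:ali-rec}). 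The exponential moment in (\ref{item:exp-p}) comes from stacking finitely many geometric waiting times, each with bounded parameter.

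The main obstacle is bookkeeping the conditional-independence web, especially (\ref{item:tail}), (\ref{item:indep}) and (\ref{item:tjr-piv}). One must show simultaneously that (a) the rejection rule depends only on finitely many neighbouring blocks, (b) the decisions at disjoint sites are independent given the rest, and (c) conditioning on acceptance preserves the $\tilde\nu_s$-distribution on its admissible restriction. The cleanest way I see is to build the auxiliary Bernoulli decisions from fresh i.i.d.\ uniform variables, couple them with the Schottky probability bound, and then argue by a strong-Markov-type lemma that the stopped sequences $(\widetilde\gamma^{\check p}_{2k+2})_k$ are i.i.d.\ and independent of $\widetilde\gamma^{\check p}_0$. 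All the remaining items are then routine unpacking of the construction, so the bulk of the technical work is precisely this combinatorial/probabilistic accounting.
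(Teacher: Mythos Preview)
This theorem is not proved in the present paper: it is quoted verbatim from \cite[Theorem~4.7]{moi} and the paper explicitly says ``let us now admit \ref{th:ex-piv} as a black box result''. So there is no proof here to compare your proposal against; the paper only uses the statement.

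That said, your sketch does capture the broad architecture one finds in \cite{moi}: the mixture decomposition $\nu^{\otimes m}=\alpha\tilde\nu_s+(1-\alpha)\tilde\kappa$ realised via a geometric number of $\tilde\kappa$-blocks before each $\tilde\nu_s$-block, followed by two layers of geometric rejection to enforce alignment. Where your outline has a genuine gap is the independence claim in item~\eqref{item:v-indep}. You write that independence of $v$ from $(\tilde u_k)_k$ and $(w_k)_k$ holds ``because the rejection test depends only on the Schottky candidates and neighbouring products, not on the underlying decomposition'' --- but the neighbouring products \emph{are} functions of $(\tilde u_k)_k$ and $(w_k)_k$, so this reasoning is circular as stated. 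The actual mechanism, which you allude to in the clause about ``re-sampling from an independent source'', requires an explicit coupling: conditionally on all neighbours, the failure indicator is stochastically dominated by a Bernoulli$(1/3)$; one then introduces fresh i.i.d.\ uniforms to manufacture an honest i.i.d.\ Bernoulli$(1/3)$ decision sequence and \emph{resamples} the Schottky block from $\tilde\nu_s$ conditional on the prescribed alignment outcome. It is this resampling step that decouples $v$ from $(\tilde u_k,w_k)$ and simultaneously yields the conditional law in item~\eqref{item:tjr-piv}. Without spelling out this coupling carefully, items~\eqref{item:v-indep}, \eqref{item:tail}, \eqref{item:indep} and \eqref{item:tjr-piv} remain unjustified, and these are precisely the points the present paper needs (see Lemma~\ref{lem:descri-bloc-pivot}).
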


	Let us now admit \ref{th:ex-piv} as a black box result and break down how Theorem \ref{th:pivot} is a direct consequence of Theorem \ref{th:ex-piv}. 
	Namely, we show that for $\nu_s$ as constructed in Lemma \ref{lem:schottky}, the sequence $(\check{p}_k)$ as constructed in Theorem \ref{th:ex-piv} satisfies the conclusions on Theorem \ref{th:pivot}.
	To construct the sequence $(g_n)$, we only need to look at points \eqref{item:v-indep} \eqref{item:indep} of Theorem \ref{th:ex-piv}. 
	The other points of Theorem \ref{th:ex-piv} are only needed to prove points \eqref{pivot:markov} to \eqref{pivot:herali} in Theorem \ref{th:pivot}.
	These points are reformulations of the conclusions of Theorem \ref{th:pivot-prequel} in terms of $\Delta\kappa$ instead of $\AA^\eps$, we will quickly remind how they articulate with Theorem \ref{th:ex-piv} without going too deep into the proof.

	\begin{Lem}\label{lem:descri-bloc-pivot}
		Let $\nu$ be a strongly irreducible and proximal probability distribution over $\mathrm{GL}(E)$.
		Let $\alpha, \eps, m$ and $\tilde{\nu}_s$ be as in Lemma \ref{lem:schottky} and $(\check{p}_k)$ be as in Theorem \ref{th:ex-piv} for $\AA = \AA^\eps$.
		Let $K \subset \Gamma$ be such that $\tilde\nu_s(K^m) = 1$. 
		Let $\nu_{K^c}$ be the normalized restriction of $\nu$ to $K^c = \Gamma \setminus K$ when $\nu(K) < 1$ and $\nu_{K^c} = \delta_{\mathrm{Id}_E}$ otherwise.
		Let $(i_k = \mathds{1}_K(\gamma_k)$ and let $g_k = \gamma_k$ for all $k$ such that $i_k = 0$. 
		Assume that the conditional distribution of $(g_k)_{i_k = 1}$ with respect to all the random variables defined in Theorem \ref{th:ex-piv} is $\nu_{K^c}^{\otimes \{k \,|\, i_k  = 0\}}$.
		Then for all $k$, the joint data of $(\check{p}_k)_{k \ge 0}$ and $(i_k)_{k \ge 0}$ is independent of $(g_k)_{k \ge 0}$.
	\end{Lem}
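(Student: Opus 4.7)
The plan is to unpack the hypothesis and track measurability with respect to the natural $\sigma$-algebra $\mathcal{F}$ generated by the random variables introduced in Theorem \ref{th:ex-piv}, namely $(\widetilde u_k), (\widetilde s_k), (w_k), (v_k)$ together with the derived sequences $(\gamma_n), (\check p_k), (\widehat v_k)$ and so on. The point is that $\mathcal{F}$ already contains both $(\check p_k)$ and $(i_k)$, while the hypothesis controls how the $g_k$'s are attached on top of this $\sigma$-algebra.

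First I would verify measurability. By construction $(\gamma_n)_n = \bigodot_{k \ge 0}\widetilde u^{\check w}_{2k}\widetilde s^{\check w}_{2k+1}$ is $\mathcal{F}$-measurable, hence so are $(i_k)_k = (\mathds{1}_K(\gamma_k))_k$, the random subset $I := \{k \ge 0 : i_k = 0\}$, and the pivot sequence $(\check p_k)_k$. Moreover, since $\widetilde\nu_s$ is supported in $K^m$, every entry belonging to an $\widetilde s$-block lies in $K$ almost surely; consequently every index $k \in I$ sits inside some $\widetilde u$-block, and $g_k = \gamma_k$ picks out a single coordinate of some $\widetilde u_j$.

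Next I would exploit the hypothesis. It asserts that conditionally on $\mathcal{F}$, the family $(g_k)_{k \in I}$ is i.i.d.\ of law $\nu_{K^c}$, indexed by $I$. Writing $j \mapsto N(j)$ for the increasing enumeration of $I$ and setting $g^\circ_j := g_{N(j)}$, the conditional law of $(g^\circ_j)_{j \ge 1}$ given $\mathcal{F}$ is $\nu_{K^c}^{\otimes\NN}$, which is deterministic. Hence $(g^\circ_j)_j$ is i.i.d.\ of law $\nu_{K^c}$ and jointly independent of $\mathcal{F}$, and in particular of the pair $((\check p_k)_k, (i_k)_k)$.

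Finally, the indexed family $(g_k)_{k \in I}$ is just the enumerated sequence $(g^\circ_j)_j$ re-indexed according to $I$; since $I$ is $(i_k)$-measurable, the independence of $(g^\circ_j)_j$ from $((\check p_k)_k, (i_k)_k)$ yields the conditional independence of $(g_k)_{k \ge 0}$ from $(\check p_k)_{k \ge 0}$ given $(i_k)_{k \ge 0}$, which is precisely the independence statement of the lemma interpreted in the sense introduced in the paragraph following Theorem \ref{th:pivot}. The only real work is the measurability bookkeeping: once one confirms that $(\check p_k)$ and $(i_k)$ are both functions of the basic ingredients of Theorem \ref{th:ex-piv}, the conclusion is essentially a restatement of the hypothesis. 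I expect the main source of friction to be keeping track of the numerous layered index-rescalings $w, \check w, v, \check v, \widehat v, p, \check p$ that enter Theorem \ref{th:ex-piv}, and in particular checking that $\check p$ is indeed $\mathcal{F}$-measurable and that the $\widetilde s$-blocks fully exhaust $\{k : i_k = 1\}$.
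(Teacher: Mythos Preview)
There is a genuine gap. You have misread the hypothesis: it concerns the family $(g_k)_{i_k = 1}$, i.e.\ the \emph{freshly sampled} coordinates that are added on top of the construction, not the family $(g_k)_{k \in I}$ with $I=\{k:i_k=0\}$. For $k\in I$ you have $g_k=\gamma_k$, and $\gamma_k$ is built from $(\widetilde u_j),(\widetilde s_j),(w_j)$, hence is $\mathcal{F}$-measurable. Your key sentence ``conditionally on $\mathcal{F}$, the family $(g_k)_{k\in I}$ is i.i.d.\ of law $\nu_{K^c}$'' is therefore false: conditionally on $\mathcal{F}$ those variables are Dirac masses. Consequently the lemma is \emph{not} a restatement of the hypothesis plus bookkeeping; the hypothesis handles only the trivial half $(g_k)_{i_k=1}$.

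The actual content, which your argument skips, is to show that $(\gamma_k)_{i_k=0}$ is conditionally independent of $(\check p_k)_k$ given $(i_k)_k$. This requires two structural facts from Theorem~\ref{th:ex-piv} that you do not invoke. First, $\check p$ is a function of $(w_k),(v_k),(p_k)$ alone (it does not see $(\widetilde u_k)$), and by items~\eqref{item:v-indep} and~\eqref{item:tail} the pair $(v_k),(p_k)$ is independent of $(\widetilde u_k,w_k)$. Second, and this is the key algebraic point, since $\widetilde\nu_s$ is supported on $K^m$, the measures $\tilde\kappa=\frac{1}{1-\alpha}(\nu^{\otimes m}-\alpha\widetilde\nu_s)$ and $\nu^{\otimes m}$ have the same normalized restriction to $\Gamma^m\setminus K^m$; hence, conditionally on $(i_k)$ and $(w_k)$, the coordinates $(u_k)_{i_k=0}$ are i.i.d.\ of law $\nu_{K^c}$. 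Combining these two facts yields that the conditional law of $(\gamma_k)_{i_k=0}$ given $(i_k),(w_k),(v_k),(p_k)$ is $\nu_{K^c}^{\otimes I}$, and since $\check p$ is measurable with respect to the conditioning, the conclusion follows. Your outline reaches none of these points.
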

	
	\begin{proof}
		First, note that $(g_k)_{k \ge 0} \sim \nu_{K^c}^{\otimes \NN}$ and $(g_k)_{k \ge 0}$ is independent of $(i_k)_{k \ge 0}$ so all we have to show is that the data of $(g_k)_{k \ge 0}$ is independent of $(\check{p}_k)_{k \ge 0}$ relative to $(i_k)_{k \ge 0}$.
		
		First, we show that the data of $(g_k)_{k \ge 0}$ is independent of $(w_k)_{k \ge 0}$ relative to $(i_k)_{k \ge 0}$.
		By definition of $(g_k)_{i_k = 1}$, all we have to show is that the conditional distribution of $(\gamma_k)_{i_k = 0}$ with respect to the joint data of $(i_k)$ and $(w_k)$ is almost surely equal to $\nu_{K^c}^{\otimes \{k \,|\, i_k  = 0\}}$.
		First note that the normalized restriction of $\tilde\kappa$ to $\Gamma^m \setminus K^m$ is equal to the normalized restriction of $\nu^{\otimes m}$ to the same set. 
		Moreover, the data of $(i_k)_{k \ge 0}$ is determined by the joint data of $(u_k)_{k \ge 0}$ and $(w_k)_{k \ge 0}$ so the conditional distribution of $(u_k)_{i_k = 0}$ with respect to the joint data of $(i_k)$ and $(w_k)$ is $\nu_{K^c}^{\otimes \{k \,|\, i_k  = 0\}}$.
		
		To conclude, we use points \eqref{item:v-indep} and \eqref{item:tail}, which combined tell us that the joint data of $(u_k)_k$ and $(w_k)_k$ (and therefore of $(i_k)$) is independent of the joint data of $(p_k)_k$ and $(v_k)_k$ so the conditional distribution of $(u_k)_{i_k = 0}$ with respect to the joint data of $(i_k)_k$, $(w_k)_k$, $(v_k)_k$ and $(p_k)_{k \ge 0}$ is $\nu_{K^c}^{\otimes \{k \,|\, i_k  = 0\}}$.
		Moreover, the data of $\check{p}_k$ is determined by the joint data of $(w_k)_k$, $(v_k)_k$ and $(p_k)_{k \ge 0}$, which concludes the proof.
	\end{proof}
	
	\begin{Lem}\label{almost-add-pivot}
		Let $\nu$ be a strongly irreducible and proximal probability distribution over $\mathrm{GL}(E)$.
		Let $\alpha, \eps, m$ and $\tilde\nu_s$ be as in Lemma \ref{lem:schottky}.
		Let $C = |\log(\eps)| + \log(2)$.
		Let $\check{p}$ be as in Theorem \ref{th:ex-piv}, then for all $0 \le i < j < k$, we have almost surely:
		\begin{equation}\label{ijk}
			\left|\Delta\kappa(\gamma^{\check{p}}_{i,j}, \gamma^{\check{p}}_{j,k})\right| \le C.
		\end{equation}
		Moreover, for all $1 \le i < j < k$ and for all $g,h$ such that $\left|\Delta\kappa(g,\gamma^{\check{p}}_{i})\right| \le C$ and $\left|\Delta\kappa(\gamma^{\check{p}}_{k-1}, h)\right| \le C$, we have:
		\begin{equation}\label{ghijk}
			\left|\Delta\kappa(g\gamma^{\check{p}}_{i,j}, \gamma^{\check{p}}_{j,k}h)\right| \le C.
		\end{equation}
	\end{Lem}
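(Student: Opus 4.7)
The plan is to deduce both estimates from point \eqref{pivot:herali} of Theorem \ref{th:pivot-prequel}, using the identity $g \AA^\eps h \Longleftrightarrow |\Delta\kappa(g,h)| \le |\log \eps|$ coming from the definition of $\AA^\eps$.

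For \eqref{ijk}, I would apply \eqref{pivot:herali} with the trivial choice $f = \mathrm{Id}_E$ and $h = \mathrm{Id}_E$. The boundary alignments $\mathrm{Id}_E \AA^\eps \gamma^{\check{p}}_i$ and $\gamma^{\check{p}}_{k-1} \AA^\eps \mathrm{Id}_E$ hold automatically because $\Delta\kappa$ involving an identity factor is zero. The conclusion of \eqref{pivot:herali} reads $\gamma^{\check{p}}_{i,j} \AA^{\eps/2} \gamma^{\check{p}}_{j,k}$, that is $|\Delta\kappa(\gamma^{\check{p}}_{i,j}, \gamma^{\check{p}}_{j,k})| \le |\log(\eps/2)| = C$, which is exactly \eqref{ijk}.

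The inequality \eqref{ghijk} is more subtle because the hypotheses give only $g \AA^{\eps/2} \gamma^{\check{p}}_i$ and $\gamma^{\check{p}}_{k-1} \AA^{\eps/2} h$, which is strictly weaker than the $\AA^\eps$ boundary alignment required by \eqref{pivot:herali}. The parity restriction that $i$ and $k-1$ are odd is the decisive ingredient: in the extraction of Theorem \ref{th:ex-piv}, $\gamma^{\check{p}}_i$ and $\gamma^{\check{p}}_{k-1}$ are then Schottky pivot blocks, and point \eqref{pivot:compact-sqz} of Theorem \ref{th:pivot-prequel} furnishes the strong squeezing $\sigma(\gamma^{\check{p}}_i), \sigma(\gamma^{\check{p}}_{k-1}) \le \eps^6/48$. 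This squeezing, combined with $g \AA^{\eps/2} \gamma^{\check{p}}_i$, forces the top singular direction of $g\gamma^{\check{p}}_i$ to coincide, up to a much smaller error, with the attracting direction of the Schottky block $\gamma^{\check{p}}_i$; in particular $g\gamma^{\check{p}}_i$ still satisfies $g\gamma^{\check{p}}_i \AA^\eps \gamma^{\check{p}}_{i+1}$. A symmetric argument on the right shows $\gamma^{\check{p}}_{k-2} \AA^\eps \gamma^{\check{p}}_{k-1} h$. I would then apply \eqref{pivot:herali} to the shortened word $\gamma^{\check{p}}_{i+1}, \dots, \gamma^{\check{p}}_{k-2}$ with effective boundary blocks $f := g\gamma^{\check{p}}_i$ and $h' := \gamma^{\check{p}}_{k-1} h$ to obtain the bound on $|\Delta\kappa(g\gamma^{\check{p}}_{i,j}, \gamma^{\check{p}}_{j,k} h)|$.

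The main obstacle will be the quantitative verification that the $\log 2$ slack built into $C = |\log \eps| + \log 2$ really suffices to absorb all losses when upgrading the $\AA^{\eps/2}$ boundary alignments into the $\AA^\eps$ alignments of the incorporated Schottky blocks. This reduces to a singular-value computation using the explicit squeezing $\sigma \le \eps^6/48$, and is essentially the same ping-pong argument that underlies the Abels--Margulis--Soifert construction of Lemma \ref{lem:schottky}. I do not expect any genuinely new geometric difficulty beyond careful bookkeeping of these constants.
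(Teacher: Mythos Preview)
Your treatment of \eqref{ijk} is exactly the paper's: apply point \eqref{pivot:herali} of Theorem~\ref{th:pivot-prequel} with $f=h=\mathrm{Id}_E$ and read off $\gamma^{\check p}_{i,j}\,\AA^{\eps/2}\,\gamma^{\check p}_{j,k}$, i.e.\ $|\Delta\kappa|\le|\log(\eps/2)|=C$. The paper's proof literally stops there and does not give a separate argument for \eqref{ghijk}.

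For \eqref{ghijk} you go further than the paper does, and you correctly isolate the only real issue: the hypothesis $|\Delta\kappa(g,\gamma^{\check p}_i)|\le C$ is $g\,\AA^{\eps/2}\,\gamma^{\check p}_i$, which is one $\log 2$ too weak to feed back into \eqref{pivot:herali} as written. Your proposed fix---use the strong squeezing $\sigma\le\eps^6/48$ of the odd (Schottky) blocks to upgrade $\AA^{\eps/2}$ at the boundary to $\AA^{\eps}$ for the absorbed block $g\gamma^{\check p}_i$, then apply \eqref{pivot:herali} on the range $[i+1,k-1]$---is the right mechanism and is precisely the ping-pong estimate underlying \cite[\S3]{moi}. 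One remark: the parity assumption ``$i$ and $k-1$ odd'' that you invoke is \emph{not} in the statement of Lemma~\ref{almost-add-pivot}, although it \emph{is} present in the corresponding point~\eqref{pivot:align} of Theorem~\ref{th:pivot}, which is the only place the lemma is used. So either the lemma statement is missing that hypothesis (most likely, given the downstream use), or a slightly different bookkeeping is intended; in either case your argument proves what is actually needed. Your closing caveat about tracking the constants through the $\eps^6/48$ squeezing is well placed---that is the only genuine computation left.
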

	
	\begin{proof}
		In \cite{moi}, we have shown that for $\alpha, \eps, m$ and $\tilde\nu_s$ be as in Lemma \ref{lem:schottky} and $(\check{p}_n)_n$ as in Theorem \ref{th:ex-piv}, the joint law $\mu$ of $(\gamma_n)_{n \ge 0}$ and $(p_n)_{n \ge 0}$ satisfies the conclusion of Theorem \ref{th:pivot-prequel}.
		The formula \eqref{ijk} follows from \eqref{pivot:herali} in Theorem \ref{th:pivot-prequel}.
	\end{proof} 
	
	Let us now give the details of the proof of Theorem \ref{th:pivot}.
	
	\begin{proof}[Proof of Theorem \ref{th:pivot}]
		Let $\alpha, \eps > 0$,  $m \ge 1$ and $K \subset \mathrm{GL}(E)$ and $\tilde{\nu}_s \in \mathrm{Prob}(K^m)$ be as in Lemma \ref{lem:schottky}.
		Let $(u_n), (s_n), (w_n)$, $(\gamma_n)$ and $(\check{p}_n)$ be as in Theorem \ref{th:pivot} and let $(g_n)$ be as in Lemma \ref{lem:descri-bloc-pivot}.
		
		Let us construct the sequence $(g_n)$.
		Let $\nu_{K^c}$ be the normalized restriction of $\nu$ to $K^c = \GL(E) \setminus K$  $\nu_{K^c} = \nu$ otherwise.
		Consider a random sequence $(g'_n)$ such that $(u_n), (s_n), (w_n), (g'_n) \sim \tilde\kappa^{\odot\NN}\otimes \tilde\nu_s^{\odot\NN} \otimes \mathcal{G}_{\alpha}^{\otimes\NN} \otimes \nu_{K^c}^{\otimes\NN}$ and let $g_n = \gamma_n$ when $\gamma_n \notin K$ \ie $g_n = u_n$ when $u_n \notin K$ and $\max\{k\,|\,m\overline{w}_k \le n\}$ is even and $g_n = g'_n$ otherwise.		  
 		Let us show that the conclusions of Theorem \ref{th:pivot} are satisfied for $p = \check{p}$
		
		Point \eqref{pivot:gammainK} in Theorem \ref{th:pivot} is a direct consequence of the construction of $g$.
		Let us check that $(g_n) \sim \mu^{\otimes\NN}$ and that $(g_n)$ is independent of the joint data of $(s_n)$ and $(w_n)$.
		Because of the product structure, $(g_n)$ is trivially independent of $(s_n)$ so all we need to check is that for all $A \subset \mathrm{GL}(E)$ such that $A \notin K$, we have $\PP(u_n \in A \,|\, u_n \notin K) = \nu_{K^c}(A)$.
		For that, write $i_n$ for the event ($\max\{k\,|\,m\overline{w}_k \le n\}$ is even), $(i_n)$ is independent of $(u_n)$ so $\PP(u_n \in A \,|\, u_n \notin K) = \PP(u_n \in A, i_n \,|\, N(u_n) > B, i_n) = \PP(\gamma_n \in A \,|\, \gamma_n \notin K) = \nu_{K^c}(A)$.
		
		Point \eqref{pivot:markov} in Theorem \ref{th:pivot} is a consequence of points \eqref{item:indep} and \eqref{item:tjr-piv} in Theorem \ref{th:ex-piv}. Indeed, by \eqref{item:indep} in Theorem \ref{th:ex-piv}, for all $k \ge 0$, the sequence $(\widetilde\gamma^{\check{p}}_{2 j + 2k + 2})_{j \ge 0}$ is independent of $(\widetilde\gamma^p_{2j})_{0 \le j \le k}$. 
		Moreover, by \eqref{item:tjr-piv} in Theorem \ref{th:ex-piv}, the conditional distribution of $(\widetilde\gamma^p_{2j + 1})_{0 \le j < k}$ with respect to the joint data of $(\widetilde\gamma^p_{2j})_{0 \le j}$ only depend on $(\widetilde\gamma^p_{2j})_{0 \le j \le k}$. 
		Hence, the joint data of $(\widetilde\gamma^p_{2j + 1})_{0 \le j < k}$ and $(\widetilde\gamma^p_{2j})_{0 \le j \le k}$ is independent of $(\widetilde\gamma^{\check{p}}_{2 j + 2k + 2})_{j \ge 0}$.
		By \eqref{item:indep} in Theorem \ref{th:ex-piv} again, the distribution of $(\widetilde\gamma^{\check{p}}_{2 j + 2k + 2})_{j \ge 0}$ does not depend on $k$.
		By point \eqref{item:tjr-piv} in Theorem \ref{th:ex-piv} again, the conditional distribution of $(\widetilde\gamma^p_{2j + 2k + 3})_{0 \le j}$ with respect to $(\widetilde\gamma^p_{n})_{n \in \NN \setminus (2 \NN + k + 3)}$ is given by a function of $(\widetilde\gamma^{\check{p}}_{2 j + 2k + 2})_{j \ge 0}$.
		Hence, the joint data of $(\widetilde\gamma^{\check{p}}_{2 j + 2k + 2})_{j \ge 0}$ and $(\widetilde\gamma^p_{2j + 2k + 3})_{0 \le j}$ is independent of $(\widetilde\gamma^p_{n})_{0 \le n \le 2k}$ and its distribution does not depend on $k$.
	\end{proof}
	
	To conclude the present reminder section, let us also remind the reader that we know how to compute the law of the sequence $(\check{p}_k)$.
	Let $(t_n) \sim \eta^{\otimes\NN}$ with $\eta = \frac{2}{3} \delta_1 + \sum_{k = 1}^{+\infty} {4^{-k}}\delta_{-k}$, let $X_0 = 0$ and for all $n \ge 0$, let $X_{n + 1} = (X_{n} + t_n)^+$.
	by construction of the sequence $(p_k)$ in the proof of \cite[Theorem~4.7]{moi} and by \cite[Lemma~4.12]{moi}, the law of the non-decreasing sequence $(\frac{\overline{p}_{2k + 1}- 1}{2})_{k \ge 0}$ is the same as the law of the non-decreasing sequence $(\max\{j \in \NN\,|\, X_j = k\})_{k \ge 0}$.
	Point \eqref{item:exp-p} in Theorem \ref{th:ex-piv} follows from that fact.
	We also know from point \ref{item:law-of-v} that $(\frac{v_{4k}-1}{2})_{k\in\NN} \sim \mathcal{G}_{1/3}^{\otimes\NN}$ and from point \eqref{item:indep} in Theorem \ref{th:ex-piv} that the data of $(p_k)_k$ is independent of the data of $(v_k)_k$.
	This fully determines the law of $(\check{p})_k$ though we do not have a nice analytical formula for it.
	From the law of large numbers and the fact that $\eta$ has mean $2/9$ and from the fact that $(p_{2k + 2})$ is i.i.d, we deduce that $\EE(p_2) = 8$ and therefore we can compute:
	\begin{equation*}
		\EE(\check{p}_2) = \frac{\EE(p_2)+ 1}{2}\EE(\hat{v}_0) + \frac{\EE(p_2) - 1}{2}\EE(\hat{v}_1) = \frac{9m}{2}\left(\frac52 \cdot\frac{1 - \alpha}{\alpha} + \frac{3}{2}\right) + \frac{7}{2}m.
	\end{equation*}

	\subsection{Study of the tail of the blocks} \label{section:tail}

	This section is dedicated to the proof of Theorem \ref{th:Delta-unif}. 
	We state the intermediate Lemma in a general setting as we will need it in future works to show that the results of the present paper hold for Gromov's relatively hyperbolic groups as well as other groups.
	
	We call metrizable semi-group a second countable Hausdorff-separated topological space endowed with a continuous and associative composition map.
	We call metrizable group a metrizable semi-group with a unit that admits a continuous inverse map.

	\begin{Lem}[Gain of moment]\label{lem:Delta}
		Let $\Gamma$ be a metrizable semi-group. 
		Let $N : \Gamma \to \RR_{\ge 0}$ be a continuous and sub-additive map.
		Let $V$ be a Banach space and let $\kappa : \Gamma \to V$ be a continuous and $N$-almost additive map \ie such that for all $g,h \in \Gamma$, we have:
		\begin{equation}\label{kappa-N}
			\|\Delta\kappa(g,h)\| \le \min\{N(g), N(h)\}.
		\end{equation}
		Let $B$ be a real constant.
		Let $0 \le a \le b$ be random integers, and let $p := b - a$.
		Let $\nu$ be a probability measure on $\Gamma$ and let $(g_k)_{k \ge 0} \sim \nu^{\otimes\NN}$ be a random i.i.d. sequence.
		Assume that the data of $(a, b)$ is independent of the data of $(g_k)_{k \ge 0}$.
		Let $(\gamma_k)_{0 \le k}$ be a random sequence such that $\gamma_k = g_k$ or $N(\gamma_k) \le B$ for all $k$.
		Let $R := \sum_{k = a}^{b - 1} N(\gamma_k) - \max_{a \le k < b} N(\gamma_k)$.
		Then we have:
		\begin{equation}\label{eq:Delta-R}
			\|\Delta\kappa(\widetilde\gamma_{a, b})\| \le R.
		\end{equation}
		Moreover, for all $t \ge 0$, we have:
		\begin{equation}\label{eq:sum-square}
			\PP(R > t) \le \PP(B(p - 1) > t) + \sum_{k = 2}^{\lfloor t / B\rfloor} \binom{k}{2} \PP(p = k)  \PP\left(N(g_0) > \frac{t}{k-1}\right)^2.
		\end{equation}
	\end{Lem}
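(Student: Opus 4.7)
The plan is to prove the pointwise inequality \eqref{eq:Delta-R} by a strategic induction, then derive the tail bound \eqref{eq:sum-square} from an order-statistic comparison combined with a union bound exploiting independence.

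For \eqref{eq:Delta-R} I induct on the word length $p = b - a$. The cases $p \le 1$ are trivial since both sides vanish. For $p \ge 2$, let $k^{\ast} \in [a,b)$ attain $\max_{a \le k < b} N(\gamma_k)$. The crucial discipline is to peel off a \emph{non-maximal} extremal factor: if $k^{\ast} \ne b-1$, use
\begin{equation*}
  \Delta\kappa(\widetilde\gamma_{a,b}) = \Delta\kappa(\widetilde\gamma_{a,b-1}) + \Delta\kappa(\gamma_{a,b-1},\,\gamma_{b-1}),
\end{equation*}
bound the last term by $N(\gamma_{b-1})$ via \eqref{kappa-N}, and apply the inductive hypothesis to $\widetilde\gamma_{a,b-1}$ (in which $k^{\ast}$ still realizes the maximum). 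Adding the two bounds gives $\|\Delta\kappa(\widetilde\gamma_{a,b})\| \le \sum_{a \le k < b} N(\gamma_k) - N(\gamma_{k^{\ast}}) = R$. If instead $k^{\ast} = b-1$, strip $\gamma_a$ from the left by the mirror identity; the induction closes identically. Stripping the maximum factor would only yield the weaker bound $\sum_k N(\gamma_k)$, which is why the choice of extremal factor matters.

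For \eqref{eq:sum-square}, set $X_k := N(\gamma_k)$ and let $X_{(1)} \ge X_{(2)} \ge \cdots \ge X_{(p)}$ denote the order statistics of $(X_k)_{a \le k < b}$. Then $R = \sum_{i=2}^{p} X_{(i)} \le (p-1) X_{(2)}$, so $\{R > t\}$ forces $X_{(2)} > t/(p-1)$; equivalently, there exist two distinct indices $k_1,k_2 \in [a,b)$ with $X_{k_i} > t/(p-1)$. I split on $p$. The case $(p-1)B > t$ is absorbed by $\PP(B(p-1)>t)$. In the complementary case $(p-1)B \le t$ one has $t/(p-1) \ge B$, whence $X_{k_i} > B$; by the dichotomy in the hypothesis this rules out the alternative $N(\gamma_{k_i}) \le B$ and forces $\gamma_{k_i} = g_{k_i}$, so that $N(g_{k_i}) > t/(p-1)$.

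Finally, conditioning on $p = k$ and using the independence of $(a,b)$ from $(g_j)_{j \ge 0}$, the $p$ random variables $(N(g_j))_{j \in [a,b)}$ are i.i.d.\ of law $N_\ast\nu$. A union bound over the $\binom{k}{2}$ unordered pairs of indices then gives
\begin{equation*}
  \PP(R > t \mid p = k) \;\le\; \binom{k}{2}\,\PP\bigl(N(g_0) > t/(k-1)\bigr)^2
\end{equation*}
for $k$ in the relevant range $(k-1)B \le t$, and summing over $k$ (noting that the remaining terms are absorbed into $\PP(B(p-1)>t)$) yields \eqref{eq:sum-square}. The main obstacle is the disciplined peel-off in Part~(1); Part~(2) is then clean, the only delicate point being the observation that an unusually large value of $X_k$ must come from the i.i.d.\ stream $(g_k)$, which is what converts an $(p-1)B$-type deterministic bound into a \emph{squared} tail on $N(g_0)$.
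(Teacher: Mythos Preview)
Your proof is correct and follows essentially the same approach as the paper. For \eqref{eq:Delta-R}, the paper writes the telescopic identity
\[
\Delta\kappa(\widetilde\gamma_{a,b}) = \sum_{j=a}^{i-1}\Delta\kappa(\gamma_j,\gamma_{j+1,b}) + \sum_{j=i+1}^{b-1}\Delta\kappa(\gamma_{i,j},\gamma_j)
\]
directly and bounds each term by $N(\gamma_j)$; your induction peeling off a non-maximal endpoint is exactly this telescoping unrolled one step at a time. For \eqref{eq:sum-square}, both arguments are identical: the order-statistic bound $R\le (p-1)X_{(2)}$ forces two indices with $N(\gamma_{k_i})>t/(p-1)$, the threshold $t/(p-1)\ge B$ pins these to the i.i.d.\ stream $(g_k)$, and independence plus a union bound over pairs gives the squared tail.
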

	
	Before giving the proof of Lemma \ref{lem:Delta}, let us remind the reader that for $\Gamma = \mathrm{GL}(E)$, the maps $\kappa: g \mapsto \log\|g\|$ and $N = g \mapsto \log\|g\| + \log\|g^{-1}\|$, are measurable and satisfy \eqref{kappa-N}.

	\begin{proof}
		Assume $a$ and $b$ to be fixed and let $p = b - a$.
		Note that we have:
		\begin{equation}\label{alt-R}
			R = \min_{a \le i < b} \sum_{\substack{a \le j < b \\ j \neq i}} N(\gamma_j),
		\end{equation}
		The minimum being reached for all $k$ such that $N(\gamma_k) = \max_{0 \le i < p} N(\gamma_i)$.
		Let $a \le i < b$.
		By \eqref{kappa-N}, we have for all $i < j < b$
		\begin{equation*}
			\|\Delta\kappa(\gamma_{i,j}, \gamma_j)\| \le N(\gamma_j).
		\end{equation*}
		By the same argument, we have for all $a \le j < i$:
		\begin{equation*}
			\|\Delta\kappa(\gamma_j,\gamma_{j+1, b})\| \le N(\gamma_j).
		\end{equation*}
		Formally, by a telescopic sums argument, we have:
		\begin{equation*}
			\Delta\kappa(\widetilde\gamma_{a,b}) = \sum_{j = a}^{i - 1} \Delta\kappa(\gamma_j,\gamma_{j+1, b}) + \sum_{j = i + 1}^{b - 1} \Delta\kappa(\gamma_{i,j}, \gamma_j),
		\end{equation*}
		the first sum being empty when $a = i$ and the second sum being empty when $i = b-1$.
		Hence, by triangular inequality, we have:
		\begin{equation*}
			\|\Delta\kappa(\widetilde\gamma_{a,b})\| \le \sum_{a \le j < b, j \neq i}N(\gamma_j)
		\end{equation*}
		If we take $I$ to minimize \eqref{alt-R}, we have:
		\begin{equation*}
			\|\Delta\kappa(\widetilde\gamma_{a,b})\| \le R.
		\end{equation*}
		This proves \eqref{eq:Delta-R}.
		
		Now we prove \eqref{eq:sum-square}.
		Let $B = \max N(K)$. 
		Then for all $i \notin I$, we have $N(\gamma_i) \le B$.
		Let $t \ge (w - 1) B$ and assume that $R \ge t$.
		Then there exist at least two indices $0 \le i < j < w$ such that $N(\gamma_i) > \frac{t}{w-1}$ and $N(\gamma_j) > \frac{t}{p-1}$.
		Otherwise, there would exist $i < p$ such that for all $j \neq i$, we have $\kappa_1(\gamma_j) \le \frac{t}{p-1}$ and, we would have $R \le t$.
		If $R \ge t \ge (p-1) B$, then we necessarily have $a \le i < j < b$ such that $N(\gamma_i) > \frac{t}{p-1}$ and $N(\gamma_i) > \frac{t}{p-1}$. Moreover, we have:
		\begin{equation*}
			\forall i < j \in I,\; \PP\left(N(\gamma_i) > \frac{t}{p-1} \cap N(\gamma_j) > \frac{t}{p-1} \,\middle|\, w\right) = N_*\nu(t / (p-1), + \infty)^2
		\end{equation*}
		Therefore, for all $t \ge (p-1)B$, we have:
		\begin{equation*}
			\PP\left(R > t \,|\, p\right) \le \binom{p}{2}\nu(t / (p-1), + \infty)^2.
		\end{equation*}
		We conclude by taking the sum over all possible values of $p$.
	\end{proof}

	Now we prove Theorem \ref{th:Delta-unif} using Theorem \ref{th:pivot} and Lemma \ref{lem:Delta}.
	Let us first introduce some useful tools for the proof.
	
	\begin{Def}[Subordinated word]
		Let $\Gamma$ be a monoid \ie a semi-group with a unit element.
		We say that a word $\tilde{\gamma} = (\gamma_k)_{0 \le k < L(\tilde\gamma)} \in \widetilde\Gamma$ is subordinated to a word $\tilde{g} = (g_k)_{0 \le k < L(\tilde{g})} \in \widetilde\Gamma$ and write $\tilde\gamma \preceq \tilde{g}$ if there exist a family $0 \le i_0 \le \dots \le i_{L(\tilde\gamma)} \le L(\tilde{g})$ such that for all $0 \le k < L(\tilde\gamma)$, we have $\gamma_k = g_{i_k, i_{k+1}}$.
	\end{Def}
	
	\begin{Prop}\label{prop:subord}
		Let $\Gamma$ be a monoid, and let $\kappa = \Gamma \to \RR$ be sub-additive. 
		Then $\Delta\kappa$ is a non-increasing map for the subordination relation on $\tilde\Gamma$.
	\end{Prop}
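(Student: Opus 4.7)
The plan is to carry out a direct computation that exploits iterated sub-additivity block by block.

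First, I would reduce to the case of a ``full-range'' subordination, namely $i_0 = 0$ and $i_{L(\tilde\gamma)} = L(\tilde g)$, so that $\Pi(\tilde\gamma) = \Pi(\tilde g)$. If the subordination is only partial, one may enlarge $\tilde\gamma$ by prepending the letter $g_{0, i_0}$ and appending $g_{i_{L(\tilde\gamma)}, L(\tilde g)}$; this new word $\tilde\gamma'$ still satisfies $\tilde\gamma' \preceq \tilde g$ in the full-range sense, and by sub-additivity $\Delta\kappa(\tilde\gamma') \le \Delta\kappa(\tilde\gamma)$, so it is enough to prove the inequality in the full-range case.

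Once this reduction is made, the terms $\kappa(\Pi(\cdot))$ cancel in the difference $\Delta\kappa(\tilde\gamma) - \Delta\kappa(\tilde g)$, which then becomes
\[
\sum_{j=0}^{L(\tilde g)-1}\kappa(g_j) - \sum_{k=0}^{L(\tilde\gamma)-1}\kappa(\gamma_k) = \sum_{k=0}^{L(\tilde\gamma)-1}\left(\sum_{j=i_k}^{i_{k+1}-1}\kappa(g_j) - \kappa(g_{i_k, i_{k+1}})\right),
\]
after regrouping the first sum along the blocks $[i_k, i_{k+1})$ prescribed by the subordination. Each summand is non-negative by iterated sub-additivity of $\kappa$ applied to the block $g_{i_k}\cdots g_{i_{k+1}-1}$, and the claimed inequality $\Delta\kappa(\tilde\gamma) \ge \Delta\kappa(\tilde g)$ follows at once.

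I do not anticipate any serious obstacle; the statement is essentially a clean reformulation of iterated sub-additivity. The only mild point concerns empty blocks ($i_k = i_{k+1}$), for which $\gamma_k$ is the identity $e$ and the inner sum is empty: sub-additivity alone only forces $\kappa(e) \ge 0$, so a strictly positive value would flip the sign of that summand. This is harmless in all the applications (where $\kappa = \log\|\cdot\|$ and thus $\kappa(e) = 0$), and one can either make this normalization explicit or simply restrict to strictly increasing sequences $(i_k)$ without loss of generality.
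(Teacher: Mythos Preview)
Your proof is correct and follows essentially the same approach as the paper's. The paper writes a single identity
\[
\Delta\kappa(\tilde g) = \Delta\kappa(g_{0,i_0}, g_{i_0, i_{L(\tilde\gamma)}}, g_{i_{L(\tilde\gamma)}, L(\tilde g)}) + \Delta\kappa(\tilde\gamma) + \sum_{k=-1}^{L(\tilde\gamma)} \Delta\kappa(\tilde g_{i_k, i_{k+1}}),
\]
and observes that all terms except $\Delta\kappa(\tilde\gamma)$ are non-positive; your reduction to the full-range case is exactly the first term, and your block-by-block sum $\sum_k\bigl(\sum_{j=i_k}^{i_{k+1}-1}\kappa(g_j) - \kappa(g_{i_k,i_{k+1}})\bigr)$ is precisely $-\sum_k \Delta\kappa(\tilde g_{i_k,i_{k+1}})$. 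Your remark about empty blocks and the normalization $\kappa(e)=0$ is well taken and applies equally to the paper's formulation.
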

	
	\begin{proof}
		We rely on the fact that $\Delta\kappa \le 0$, which is a direct consequence of the sub-additivity of $\kappa$.
		Let $\tilde\gamma \preceq \tilde{g} \in \widetilde\Gamma$.
		Let $l \ge 0$ and let $0 \le i_0 \le \dots \le i_{L(\tilde\gamma)} \le L(\tilde{g})$ be such that for all $0 \le k < L(\tilde\gamma)$, we have $\gamma_k = g_{i_k, i_{k+1}}$.
		We write $i_{-1} = 0$ and $i_{L(\tilde\gamma) + 1}= L(\tilde g)$.
		Formally, we have:
		\begin{equation}
			\Delta\kappa(\tilde{g}) = \Delta\kappa(g_{0, i_0}, g_{i_0, i_l}, g_{i_L(\tilde\gamma), L(\tilde g)}) + \Delta\kappa(\tilde\gamma) + \sum_{k = -1}^{L(\tilde{\gamma})}\Delta\kappa(\widetilde{g}_{i_k, i_k + 1}).
		\end{equation}
		So $\Delta\kappa(\tilde{g})$ is a sum of non positive terms, one of them being $\Delta\kappa(\tilde\gamma)$ and therefore $\Delta\kappa(\tilde{g}) \le \Delta\kappa(\tilde\gamma)$.
	\end{proof}
	
	\begin{Lem}\label{lem:step}
		Let $(p_n)_{n \ge 0}\in\NN_{\ge 1}^\NN$ be independent random variables.
		For all $n \in \NN$, we write $\lfloor n\rfloor_p = \max\{\overline{p}_k \,|\, k \in \NN, \overline{p}_k  \le n\}$ and  $\lceil n \rceil_p = \min\{\overline{p}_k\,|\,k \in \NN, \overline{p}_k  \ge n\}$. 
		Then for all $n, t \in \NN$, we have:
		\begin{equation}
			\PP\left(\lceil n \rceil_p - \lfloor n\rfloor_p = t\right) \le t \max_{0 \le k} \PP(p_k = t).
		\end{equation}
	\end{Lem}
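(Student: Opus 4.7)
The plan is to decompose the event by the index $k$ of the block that straddles $n$, then exploit independence and the fact that the partial sums $\overline{p}_k$ are strictly increasing.

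Concretely, when $t \ge 1$ (the statement is trivial when $t = 0$ since the right-hand side vanishes and a collision $n\in\{\overline p_k\}$ contributes nothing that we need to count here), the event $\{\lceil n\rceil_p - \lfloor n\rfloor_p = t\}$ decomposes as the disjoint union over $k \ge 0$ of the events
\begin{equation*}
A_k := \{\overline{p}_k < n < \overline{p}_{k+1}\} \cap \{p_k = t\}.
\end{equation*}
Indeed, $\overline p$ is strictly increasing (each $p_k \ge 1$), so if $n$ is not itself of the form $\overline p_j$ there is a unique $k$ with $\overline p_k < n < \overline p_{k+1}$, and then $\lceil n\rceil_p - \lfloor n\rfloor_p = \overline p_{k+1} - \overline p_k = p_k$. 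Rewriting the condition $\overline{p}_k < n < \overline{p}_k + p_k$ together with $p_k = t$ as $\overline p_k \in \{n-t+1,\dots,n-1\}$ and $p_k = t$, and using that $\overline p_k$ is a function of $p_0,\dots,p_{k-1}$ (hence independent of $p_k$), I get
\begin{equation*}
\PP(A_k) = \PP(p_k = t)\,\PP(\overline{p}_k \in \{n-t+1,\dots,n-1\}).
\end{equation*}

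Next I bound $\PP(p_k = t) \le \max_j \PP(p_j = t)$ and sum over $k$. To control $\sum_k \PP(\overline{p}_k = m)$ for a fixed $m$, I use that the $\overline{p}_k$'s are strictly increasing in $k$, so the random set $\{\overline{p}_k : k \ge 0\}$ contains $m$ at most once; hence $\sum_k \PP(\overline p_k = m) = \EE[\#\{k : \overline p_k = m\}] \le 1$. Summing over the $t-1$ possible integer values of $m$ in $\{n-t+1, \ldots, n-1\}$ gives
\begin{equation*}
\sum_{k \ge 0} \PP(\overline p_k \in \{n-t+1,\dots,n-1\}) \le t - 1 \le t.
\end{equation*}
Combining these two bounds yields $\PP(\lceil n\rceil_p - \lfloor n\rfloor_p = t) \le t \max_k \PP(p_k = t)$, as desired.

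There is no real obstacle here: the argument is a short counting argument. The one point that requires minor care is the independence step, where one must be sure to condition on $\overline p_k$ before invoking $p_k = t$; this is clean because the $p_k$'s are jointly independent, not merely pairwise, so $p_k$ is independent of the $\sigma$-algebra generated by $(p_0,\dots,p_{k-1})$, and in particular of $\overline p_k$. The monotonicity of the partial sums (which depends on the assumption $p_k \ge 1$) is what prevents the bound from growing with $k$ and is the key quantitative ingredient.
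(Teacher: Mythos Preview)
Your proof is correct and follows essentially the same approach as the paper's: decompose by the index $k$ of the block straddling $n$, factor using the independence of $p_k$ from $\overline{p}_k$, and then use $\sum_k \PP(\overline{p}_k = j) \le 1$ via strict monotonicity of the partial sums. Your range $\{n-t+1,\dots,n-1\}$ in fact gives the slightly sharper constant $t-1$ in place of the paper's $t$.
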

	
	\begin{proof}
		Let $n$ be fixed and write $l_n := \max\{k \in \NN \,|\,\overline{p}_k  \le n\}$.
		Note that $l_n \le n$ because we assumed the measures $\eta_i$ to be supported on $\NN_{\ge 1}$.
		Note also that for all $n$, we have $\lceil n \rceil_p - \lfloor n\rfloor_p = 0$ or $\lceil n \rceil_p - \lfloor n\rfloor_p = p_{l_n}$.
		We have:
		\begin{equation*}
			(p_{l_n} = t) = \bigcup_{k \le n} (p_k \ge t) \cap (k = l_n)
		\end{equation*}
		Note also that for all $k,n$ , we have $k  = l_n$ if and only if $\overline{p}_k \le n$ and $p_k > n - \overline{p}_n$ so we have:
		\begin{align*}
			(p_{l_n} = t) & \subset \bigcup_{k = 0}^n (p_k \ge t) \cap (\overline{p}_k \le n) \cap (p_k > n - \overline{p}_n) \\
			& \subset \bigcup_{k \le j \le n} (p_k = t) \cap (\overline{p}_k = j) \cap (p_k > n - j).
		\end{align*}
		Now for all $k \le j \le n$, the events $(\overline{p}_k = j)$ and $(p_k = t)$ are independent by assumption.
		Therefore, we have:
		\begin{align*}
			\PP(p_{l_n} = t) & = \sum_{k \le j \le n} \PP(\overline{p}_k = j) \PP(p_k = t) \mathds{1}_{n-j < t} \\
			& = \sum_{j = n - t + 1}^{n} \sum_{k = 0}^j \PP(\overline{p}_k = j) \PP(p_k = t)
		\end{align*}
		For all $j$, by positivity of $p$, there is at most one integer $k$ such that $p_k = j$, so we have $\sum_{k = 0}^j \PP(\overline{p}_k = j) = \PP(\exists k, \overline{p}_k = j) \le 1$.
		Therefore:
		\begin{equation}
			\PP(p_{l_n} = t) \le \sum_{j = n - t + 1}^{n} \max_{0 \le k \le j} \PP(p_k = t) \le t \max_{0 \le k} \PP(p_k = t). \qedhere
		\end{equation}
	\end{proof}

	Now we prove the following result which says that Points \eqref{pivot:gammainK}, \eqref{pivot:exp}, \eqref{pivot:indep} and \eqref{pivot:almost-ad} in Theorem \ref{th:pivot} imply Theorem \ref{th:Delta-unif}.
	One may note that the result still holds when only assuming $\kappa$ to be $N$-almost additive.
	The sub-additivity assumption is however very convenient for the proof and true in most practical applications.
	
	\begin{Lem}\label{lem:delta-pivot}
		Let $\Gamma$ be a metrizable monoid. Let $N : \Gamma \to \RR_{\ge 0}$ be a continuous almost additive map and let $\kappa : \Gamma \to \RR$ be a continuous, $N$-almost additive and sub-additive map.
		Let $K \subset \Gamma$ be compact.
		Let $(p_n)_{n \ge 0} \in \NN_{\ge 1}^\NN$ be a sequence of independent random variables and let $(g_n) \in \Gamma^{\NN}$ be a random i.i.d. sequence, that is independent of $(p_n)$.
		Let $(\gamma_n) \in\Gamma^\NN$ be a random sequence such that for all $n$, we have $\gamma_n = g_n$ or $\gamma_n \in K$.
		Assume that there exist non-random constants $C, \beta > 0$ such that for all $n, t \ge 0$, we have $\PP(p_n = t) \le Ce^{-\beta t}$ and such that for all $0 \le i  < j < k$, we have:
		\begin{equation}
			|\Delta\kappa(\gamma^p_{i,j}, \gamma^p_{j,k})| \le C.
		\end{equation}
		Assume also that the law of $\PP(N(g_0) > 0) > 0$.
		Then there exist constants $C'', \beta'' > 0$ such that for all $t \ge 0$, and for all $0 \le i < j < k$, we have:
		\begin{equation}\label{bound-delta'}
			\PP\left(\Delta\kappa(\gamma_{i,j}, \gamma_{j,k}) > t \right) \le \sum_{k = 1}^{+\infty} C''e^{-\beta'' k} \PP(N(g_0) > t/k)^2.
		\end{equation}
	\end{Lem}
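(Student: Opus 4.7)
The plan is to reduce the two-factor norm-cancellation $|\Delta\kappa(\gamma_{i,j},\gamma_{j,k})|$ to a sum of bounded error terms supported near the three positions $i$, $j$, $k$, and then to control each error by Lemma \ref{lem:Delta} applied to the corresponding partial-block word, combined with the exponential bound on block lengths coming from Lemma \ref{lem:step}. For fixed $i<j<k$ I first localize the surrounding block boundaries by setting $I = \lceil i\rceil_p$, $J^- = \lfloor j\rfloor_p$, $J^+ = \lceil j\rceil_p$, $K = \lfloor k\rfloor_p$; in the generic configuration $I\le J^- < J^+\le K$, Proposition \ref{prop:subord} applied to the four-factor refinement $(\gamma_{i,J^-},\gamma_{J^-,j},\gamma_{j,J^+},\gamma_{J^+,k})$ of $(\gamma_{i,j},\gamma_{j,k})$ bounds $|\Delta\kappa(\gamma_{i,j},\gamma_{j,k})|$ by $|\Delta\kappa|$ of this refinement. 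Degenerate configurations (when $j$ is a block boundary, when $i$, $j$, $k$ share a block, etc.) will be handled by similar but shorter direct computations.

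In the next step I peel off the partial-block pieces using the $N$-almost-additivity $|\Delta\kappa(g,h)|\le\min(N(g),N(h))$ and invoke the block-level hypothesis $|\Delta\kappa(\gamma^p_{a,b},\gamma^p_{b,c})|\le C$ on the central block-aligned product (splitting it once at $J^-$ and once at $J^+$). After careful bookkeeping this should give an estimate of the form
\[
  |\Delta\kappa(\gamma_{i,j},\gamma_{j,k})|
  \;\le\;2C
  +|\Delta\kappa(\widetilde\gamma_{i,I})|
  +|\Delta\kappa(\widetilde\gamma_{K,k})|
  +\min\!\bigl(N(\gamma_{J^-,j}),\,N(\gamma_{j,J^+})\bigr),
\]
in which the two boundary errors are realized as $|\Delta\kappa|$ of the partial-block words $\widetilde\gamma_{i,I}$ and $\widetilde\gamma_{K,k}$, so that the tail bound \eqref{eq:sum-square} from Lemma \ref{lem:Delta} applies and is already quadratic in $\PP(N(g_0))$.

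The tail of the middle term enjoys a naturally squared structure: conditional on the block structure $p$, the factors $\gamma_{J^-,j}$ and $\gamma_{j,J^+}$ depend on disjoint, independent families of $g_n$'s, so the probability that both of their $N$-norms exceed $s$ factorizes as a product, each factor bounded via sub-additivity of $N$ by $m\,\PP(N(g_0)>s/m)$ where $m=J^+-J^-$; this yields the squared probability $\PP(N(g_0)>s/m)^2$ up to polynomial factors. Combined with Lemma \ref{lem:step}, which gives $\PP(J^+-J^-=m)\le m\max_l\PP(p_l=m)\le Cme^{-\beta m}$ and analogously for $I-i$ and $k-K$, and with Lemma \ref{lem:Delta}'s tail formula for the two boundary $|\Delta\kappa|$-terms, I can sum over all relevant lengths and absorb polynomial factors into a slightly smaller exponential rate $\beta''<\beta$, yielding the target estimate.

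The main obstacle is carrying out the reduction of the previous paragraph so that each boundary error is genuinely a $|\Delta\kappa|$-quantity of a short partial-block word rather than the raw $N$-norm $N(\gamma_{i,I})$ or $N(\gamma_{K,k})$ of the partial-block product. Indeed, a bound involving the raw $N$-norms would give only a linear-in-$\PP(N(g_0))$ tail for the boundary contributions, missing the squared-probability target. Intuitively, a large $g_n$ lying in the partial block at $i$ contributes symmetrically to both $\kappa(\gamma_{i,k})$ and $\kappa(\gamma_{i,j})$ and so cancels inside $\Delta\kappa(\gamma_{i,j},\gamma_{j,k})$ up to a cross-term that is estimated exactly by Lemma \ref{lem:Delta} with a quadratic tail; making this cancellation explicit and carefully tracking constants through the shifting argument is the central technical point of the proof.
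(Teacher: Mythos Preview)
Your plan has a genuine gap, and you have in fact put your finger on it in your last paragraph without resolving it. Once you refine to the word $\widetilde\gamma_{i,I}\odot(\gamma_{I,J^-})\odot(\gamma_{J^-,j})\odot(\gamma_{j,J^+})\odot(\gamma_{J^+,K})\odot\widetilde\gamma_{K,k}$ and strip off $|\Delta\kappa(\widetilde\gamma_{i,I})|$ and $|\Delta\kappa(\widetilde\gamma_{K,k})|$, you are left with the six-factor quantity $|\Delta S(i,I,J^-,j,J^+,K,k)|$. Splitting this at $I$ and then at $K$ produces the block-aligned piece $|\Delta S(I,J^-,j,J^+,K)|$, which you can indeed bound by $2C+\min(N(\gamma_{J^-,j}),N(\gamma_{j,J^+}))$, \emph{plus} the two cross-terms $|\Delta S(i,I,k)|\le N(\gamma_{i,I})$ and $|\Delta S(I,K,k)|\le N(\gamma_{K,k})$. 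These are exactly the raw-$N$ terms you feared, and they carry only a linear tail in $\PP(N(g_0)>\cdot)$. Your intuition that a single large $g_n$ near $i$ cancels between $\kappa(\gamma_{i,j})$ and $\kappa(\gamma_{i,k})$ is morally correct, but in the abstract hypotheses of the lemma (only sub-additivity of $\kappa$ and $N$-almost-additivity) there is no mechanism to turn that into a bound on $|\Delta\kappa(\gamma_{i,I},\gamma_{I,k})|$ sharper than $N(\gamma_{i,I})$; the ``shifting argument'' you allude to has no content here.

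The paper's fix is not to make the cancellation explicit but to sidestep it entirely: instead of refining \emph{inward} from $i$ and $k$ to $I=\lceil i\rceil_p$ and $K=\lfloor k\rfloor_p$, it extends \emph{outward} to $\lfloor i\rfloor_p$ and $\lceil k\rceil_p$. The key observation is that Proposition~\ref{prop:subord} allows the coarse word to cover a strict sub-range of the fine word, so $(\gamma_{i,j},\gamma_{j,k})$ is still subordinated to $\widetilde\gamma_{\lfloor i\rfloor_p,\lceil i\rceil_p}\odot(\gamma_{\lceil i\rceil_p,\lfloor j\rfloor_p})\odot\widetilde\gamma_{\lfloor j\rfloor_p,\lceil j\rceil_p}\odot(\gamma_{\lceil j\rceil_p,\lfloor k\rfloor_p})\odot\widetilde\gamma_{\lfloor k\rfloor_p,\lceil k\rceil_p}$. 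After stripping off the three full-block $|\Delta\kappa|$'s, the remaining five-factor $\Delta\kappa$ has \emph{every} cut at a block boundary, so it is bounded by $4C$ directly from the hypothesis---no $N$-almost-additivity is invoked at the endpoints at all. The three boundary errors are then uniformly of the form $|\Delta\kappa(\widetilde\gamma_{\lfloor n\rfloor_p,\lceil n\rceil_p})|$ for $n\in\{i,j,k\}$, each controlled by Lemma~\ref{lem:Delta} and Lemma~\ref{lem:step} with the desired quadratic tail. Note also that this treats the middle cut at $j$ the same way as the endpoints, so your separate $\min(N(\gamma_{J^-,j}),N(\gamma_{j,J^+}))$ argument and its conditional-independence step are unnecessary.
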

	
	\begin{proof}
		For all $n \in \NN$, we write $\lfloor n\rfloor_p = \max\{k\,|\, \overline{p}_k  \le n\}$ and  $\lceil n \rceil_p = \min\{k\,|\, \overline{p}_k  \ge n\}$. 
		Let $0 \le i < j < k$.
		Formally, when $\lceil i \rceil_p \le \lfloor j \rfloor_p$ and $\lceil j \rceil_p\le \lfloor k \rfloor_p$ we have:
		\begin{multline}\label{decomp-ijk}
			\Delta\kappa\left(
			\widetilde\gamma_{\lfloor i \rfloor_p, \lceil i \rceil_p} \odot
			(\gamma_{\lceil i \rceil_p, \lfloor j \rfloor_p}) \odot
			\widetilde\gamma_{\lfloor j \rfloor_p, \lceil j \rceil_p} \odot
			(\gamma_{\lceil j \rceil_p, \lfloor k \rfloor_p}) \odot 
			\widetilde\gamma_{\lfloor k \rfloor_p, \lceil k \rceil_p}\right) \\ 
			= \Delta\kappa(
			\gamma_{\lfloor i \rfloor_p, \lceil i \rceil_p},
			\gamma_{\lceil i \rceil_p, \lfloor j \rfloor_p}, 
			\gamma_{\lfloor j \rfloor_p, \lceil j \rceil_p},
			\gamma_{\lceil j \rceil_p, \lfloor k \rfloor_p}, 
			\gamma_{\lfloor k \rfloor_p, \lceil k \rceil_p}) \\ 
			+ \Delta\kappa\left(
			\widetilde\gamma_{\lfloor i \rfloor_p, \lceil i \rceil_p} \right)
			+ \Delta\kappa\left(
			\widetilde\gamma_{\lfloor j \rfloor_p, \lceil j \rceil_p} \right)
			+ \Delta\kappa\left( 
			\widetilde\gamma_{\lfloor k \rfloor_p, \lceil k \rceil_p} \right).
		\end{multline} 
		By assumption, we have:
		\begin{equation}
			\left|\Delta\kappa(
			\gamma_{\lfloor i \rfloor_p, \lceil i \rceil_p},
			\gamma_{\lceil i \rceil_p, \lfloor j \rfloor_p}, 
			\gamma_{\lfloor j \rfloor_p, \lceil j \rceil_p},
			\gamma_{\lceil j \rceil_p, \lfloor k \rfloor_p}, 
			\gamma_{\lfloor k \rfloor_p, \lceil k \rceil_p})\right| \le 4 C
		\end{equation}
		Moreover, by Proposition \ref{prop:subord}, we have:
		\begin{equation*}
			\Delta\kappa\left(
			\widetilde\gamma_{\lfloor i \rfloor_p, \lceil i \rceil_p} \odot
			(\gamma_{\lceil i \rceil_p, \lfloor j \rfloor_p}) \odot
			\widetilde\gamma_{\lfloor j \rfloor_p, \lceil j \rceil_p} \odot
			(\gamma_{\lceil j \rceil_p, \lfloor k \rfloor_p}) \odot 
			\widetilde\gamma_{\lfloor k \rfloor_p, \lceil k \rceil_p}\right) \le \Delta\kappa(\gamma_{i,j}, \gamma_{j,k}).
		\end{equation*}
		In conclusion, we have:
		\begin{equation}\label{upper-bound-delta}
			\Delta\kappa(\gamma_{i,j}, \gamma_{j,k}) \le 4 C 
			+ |\Delta\kappa\left( \widetilde\gamma_{\lfloor i \rfloor_p, \lceil i \rceil_p} \right)|
			+ |\Delta\kappa\left( \widetilde\gamma_{\lfloor j \rfloor_p, \lceil j \rceil_p} \right)|
			+ |\Delta\kappa\left( \widetilde\gamma_{\lfloor k \rfloor_p, \lceil k \rceil_p} \right)|.
		\end{equation}
		In case $\lceil i \rceil_p > \lfloor j \rfloor_p$, we have $\lceil i \rceil_p > j$ and therefore $\lfloor i \rfloor_p = \lfloor j \rfloor_p$ and $\lceil i \rceil_p = \lceil j \rceil_p$ and if we assume that $\lceil j \rceil_p \le \lfloor k \rfloor_p$ then formally, we have:
		\begin{multline}\label{decomp-ik}
			\Delta\kappa\left(
			\widetilde\gamma_{\lfloor i \rfloor_p, \lceil i \rceil_p} \odot
			(\gamma_{\lceil i \rceil_p, \lfloor k \rfloor_p}) \odot 
			\widetilde\gamma_{\lfloor k \rfloor_p, \lceil k \rceil_p}\right)
			= \Delta\kappa(
			\gamma_{\lfloor i \rfloor_p, \lceil i \rceil_p},
			\gamma_{\lceil i \rceil_p, \lfloor k\rfloor_p},  
			\gamma_{\lfloor k \rfloor_p, \lceil k \rceil_p}) \\ 
			+ \Delta\kappa\left( \widetilde\gamma_{\lfloor i \rfloor_p, \lceil i \rceil_p} \right)
			+ \Delta\kappa\left( \widetilde\gamma_{\lfloor k \rfloor_p, \lceil k \rceil_p} \right).
		\end{multline} 
		The same formula holds when $\lceil j \rceil_p > \lfloor k \rfloor_p$, and $\lceil i \rceil_p \le \lfloor j \rfloor_p$.
		Moreover, in this case, the word $(\gamma_{i,j}, \gamma_{j,k})$ is subordinated to the word $\widetilde\gamma_{\lfloor i \rfloor_p, \lceil i \rceil_p} \odot
		(\gamma_{\lceil i \rceil_p, \lfloor k \rfloor_p}) \odot 
		\widetilde\gamma_{\lfloor k \rfloor_p, \lceil k \rceil_p}$ so we have:
		\begin{equation*}
			\Delta\kappa(\gamma_{i,j}, \gamma_{j,k}) \le 2 C 
			+ |\Delta\kappa\left( \widetilde\gamma_{\lfloor i \rfloor_p, \lceil i \rceil_p} \right)|
			+ |\Delta\kappa\left( \widetilde\gamma_{\lfloor k \rfloor_p, \lceil k \rceil_p} \right)|.
		\end{equation*} 
		and therefore \eqref{upper-bound-delta} still holds.
		When we assume that $\lceil j \rceil_p > \lfloor k \rfloor_p$, and $\lceil i \rceil_p > \lfloor j \rfloor_p$, we have $ = \lfloor i \rfloor_p = \lfloor j \rfloor_p = \lfloor k \rfloor_p$ and $\lceil i \rceil_p = \lceil j \rceil_p = \lceil k \rceil_p$ so the word $(\gamma_{i,j}, \gamma_{j,k})$ is subordinated to the word $\widetilde\gamma_{\lfloor i \rfloor_p, \lceil i \rceil_p}$ and \eqref{upper-bound-delta} still holds.
		
		Let us now explicit a uniform probabilistic bound on $|\Delta\kappa\left( \widetilde\gamma_{\lfloor n \rfloor_p, \lceil n \rceil_p} \right)|$ for all $n \ge 0$.
		The data of $\lfloor n \rfloor_p, \lceil n \rceil_p$ is determined by $(p_k)_{k \ge 0}$ so it is independent of the data of $(g_n)_{n \ge 0}$.
		By Lemma \ref{lem:Delta}, for $B = \max N(K)$ and for all $t \ge 0$, we have:
		\begin{multline*}
			\PP(|\Delta\kappa\left( \widetilde\gamma_{\lfloor n \rfloor_p, \lceil n \rceil_p} \right)| > t) \le \PP(B (\lceil n \rceil_p - \lfloor n \rfloor_p - 1) > t) \\
			+ \sum_{k = 2}^{\lfloor t / B \rfloor} \binom{k}{2}\PP(\lceil n \rceil_p - \lfloor n \rfloor_p = k) \PP\left(N(g_0) > \frac{t}{k - 1}\right)
		\end{multline*}
		By Lemma \ref{lem:step}, we have $\PP(\lceil n \rceil_p - \lfloor n \rfloor_p = k) \le k Ce^{-\beta k}$ for all $k$ and all $n$ and therefore:
		\begin{equation*}
			\PP(\lceil n \rceil_p - \lfloor n \rfloor_p - 1 > t / B) \le \lceil 1 + t / B \rceil \frac{C}{e^{\beta} - 1} e^{-\beta \lceil t / B \rceil} + \frac{C}{(e^{\beta} - 1)^2} e^{-\beta \lceil t / B \rceil}.
		\end{equation*} 
		Let $M \ge 1$ be such that $\PP(N(g_0) > B / M) \ge 1 / M$, such a $M$ exist because $\lim_{M \to + \infty} \PP(N(g_0) > B / M) - 1 / M = \PP(N(g_0) > 0)$, which is positive by assumption. 
		Then, for all $k \ge M t / B + 1$, we have $\PP\left(N(g_0) > \frac{t}{k - 1}\right) \ge \PP(N(g_0) > B / M) \ge 1 / M$.
		Now let $C', \beta' > 0$ be such that for all $k \ge 2$, we have $C e^{-\beta k} k^2(k-1)/2 \le C'e^{-\beta k'}$ and for all $t \ge 0$, we have:
		\begin{equation*}
			\frac{C'}{e^{\beta'} - 1} e^{-\beta \lceil Mt / B\rceil} \frac{1}{M} \ge \lceil 1 + t / B \rceil \frac{C}{e^{\beta} - 1} e^{-\beta \lceil t / B \rceil} + \frac{C}{(e^{\beta} - 1)^2} e^{-\beta \lceil t / B \rceil}.
		\end{equation*}
		By exponential comparison Theorem, we know that there exist such a $C'$ for all $\beta' < \beta$.
		Then we have:
		\begin{equation*}
			\PP(|\Delta\kappa\left( \widetilde\gamma_{\lfloor n \rfloor_p, \lceil n \rceil_p} \right)| > t) \le \sum_{k \ge 1} C'e^{-\beta'k} \PP(N(g_0) > t / k)
		\end{equation*}
		By \eqref{upper-bound-delta}, we have for all $t$:
		\begin{align}
			\PP\left(|\Delta\kappa(\gamma_{i,j}, \gamma_{j,k})| > t\right) & \le \sum_{n \in \{i,j,k\}} \PP(|\Delta\kappa\left( \widetilde\gamma_{\lfloor n \rfloor_p, \lceil n \rceil_p} \right)| > (t - 4 C) / 3) \\
			& \le \sum_{k = 1}^{+\infty} 3C'e^{-\beta'k} \PP\left(N(g_0) > \frac{t - 4 C}{3 k}\right)
		\end{align}
		For $t > 8 C$, we have $\frac{t - 4 C}{3 k} \ge \frac{t}{6 k}$ and:
		\begin{equation*}
			 \forall t > 8 C, \;\sum_{k = 1}^{+ \infty} 3C'e^{-\beta' k} \PP\left(N(g_0) > \frac{t}{6 k}\right) = \sum_{k \in 6\NN_{\ge 1}} 3C'e^{-\beta' k / 6} \PP\left(N(g_0) > \frac{t}{k}\right).
		\end{equation*}
		Hence, we have:
		\begin{equation}
			\PP\left(|\Delta\kappa(\gamma_{i,j}, \gamma_{j,k})| > t\right) \le \sum_{k = 1}^{+ \infty} 3C'e^{-\beta' k / 6} \PP\left(N(g_0) > \frac{t}{k}\right)
		\end{equation}
		For $t \le 8C$, and for all $k \ge 8 C M / B$, we have $\PP(N(g_0) > t/k) \ge 1/M$ and therefore:
		\begin{equation*}
			\sum_{k = \lceil 8 C M / B\rceil}^{+ \infty} 3C'e^{-\beta' k / 6} \PP\left(N(g_0) > \frac{t}{k}\right) \ge  \frac{3C'}{1 - e^{-\beta' / 6}} \frac{e^{-\beta \lceil 8 C M / B\rceil / 6}}{M}
		\end{equation*}
		We can always assume that $C' \ge M(1 - e^{-\beta' / 6})e^{\beta \lceil 8 C M / B\rceil / 6} / 3$, in that case, we have:
		\begin{equation*}
			\forall t \le 8C, \; \sum_{k = 1}^{+ \infty} 3C'e^{-\beta' k / 6} \PP\left(N(g_0) > \frac{t}{k}\right) \ge 1 \ge \PP\left(|\Delta\kappa(\gamma_{i,j}, \gamma_{j,k})| > t\right)
		\end{equation*}
		In conclusion, we have \eqref{bound-delta'} for $C'' = 3 C'$ and $\beta'' = \beta' / 6$.
	\end{proof}

	\begin{proof}[Proof of Theorem \ref{th:Delta-unif}]
		Let $K \subset\Gamma$, $m \in \NN_{\ge 1}$ and $C$ be as in Theorem \ref{th:pivot}.
		Let $(\gamma_n) \sim \nu^{\otimes\NN}$, $(g_n)$ and $(p_n)$ be random sequences defined on the same probability space as in Theorem \ref{th:pivot}.
		To apply Lemma \ref{lem:delta-pivot}, we need to show first that $\PP(N(g_0) > 0) > 0$ and that there are constant $C, \beta > 0$ such that for all $n$, we have $\PP(p_n = t) \le C e^{-\beta t}$ and that the $p_n$'s are independent. 
		The later is a direct consequence of points \eqref{pivot:markov} and \eqref{pivot:exp} in theorem \ref{th:pivot}.
		
		By \eqref{pivot:markov} (looking only at the sequence of lengths), the sequence $(p_{2k})_{k \ge 0}$ is independent (indeed, the data of $(p_{2 n})_{n > k}$ is independent of $(p_{2 n})_{n \le k}$ for all $k$) and since $p_{2k + 1} = m$ for all $k$, it is non-random and therefore independent of everything (even itself) so the sequence $(p_{k})_{k \ge 0}$ is independent.
		By \eqref{pivot:exp} all the $p_k$-s have a finite exponential moment so there exist constants $C_k, \beta_k > 0$ such that $\PP(p_k = t) \le C_k e^{-\beta_k C}$.
		Moreover, by \eqref{pivot:markov} again, the sequence $(p_{2k})_{k \ge 1}$ is identically distributed so all $p_k$ shares its law with $p_0$, $p_1$ or $p_2$ so we have for $C = \max\{C_0, C_1, C_2\}$ and $\beta_k = \min\{\beta_0, \beta_1, \beta_2\}$, we have $\PP(p_n = t) \le C e^{-\beta t}$ for all $n$.
		
		Now let us prove that \eqref{bound-delta'} implies \eqref{Delta-unif}.
		If we assume $N(\gamma_0)$ to be unbounded, then for all $t \ge B = \max N(K)$, we have $\PP(N(\gamma_0) > t) \ge \PP(N(g_0) > t) \PP(N(\gamma_0) \notin K)$ and for all $t < B$, we have $\PP(N(\gamma_0) > t) \ge \PP(N(\gamma_0) > B)$.
		Therefore, we have $\PP(N(\gamma_0) > t) \ge \PP(N(\gamma_0) > B) \PP(N(g_0) > t)$ so \eqref{bound-delta'} implies \eqref{Delta-unif} with $C = C'' \PP(N(\gamma_0) > B)^{-2}$ a,d $\beta = \beta''$.
		Moreover, we have $\PP(N(g_0) > B) > 0$ so we may apply Lemma \ref{lem:delta-pivot}.
		
		If we instead assume that $\PP(N(g_0) > B) = 0$ and that $N_*\nu$ is non-degenerate, then we use \eqref{decomp-ijk} or \eqref{decomp-ik} which tells us that:
		\begin{equation*}
			\Delta\kappa(\gamma_{i,j}, \gamma_{j,k}) \le 4C + B\left(\lceil i \rceil_p - \lfloor i \rfloor_p  \lceil i \rceil_p + \lceil j \rceil_p - \lfloor j \rfloor_p  \lceil i \rceil_p +\lceil k \rceil_p - \lfloor k \rfloor_p  \lceil i \rceil_p\right)
		\end{equation*}
		By Lemma \ref{lem:step}, this has a bounded exponential moment. 
		We conclude using that fact that $N_*\nu$ is non degenerate, \ie there exists $\alpha > 0$ such that $\PP(N(\gamma_0) > \alpha) \ge \alpha$ so on the right hand side of \eqref{Delta-unif}, we have $\sum Ce^{-\beta k} \PP(N(\gamma_0) > t/k)^2 \ge C \alpha^2 e^{-\beta t /\alpha}$.
		If $N_*\nu$ is degenerate then $\Delta\kappa(\gamma_{i,j}, \gamma_{j,k}) = 0$ almost surely for all $i < j<k$ and \eqref{Delta-unif} is trivial.
		Note that this last case does not occur when $\nu$ is proximal.
	\end{proof}
	
	\section{Study of almost additive processes}\label{section:almost-additive}
	
	A consequence of Theorem \ref{th:Delta-unif} is that the random process $(\kappa(\gamma_{m,n}))_{0 \le m \le n}$ is almost additive in a way that we define in the first paragraph of the present section.
	To be able to use some of the result in further works, we study almost additive processes in an axiomatic way.
	For the moment, it is not clear whether or not the formulation in terms of almost additive processes yields result that could not already be proven using classical ergodic theoretic tools.
	We state results in full generality to avoid having to give different statements for $\kappa$ and $\dot \kappa$ and to be able to apply them in a broader context.
	
	\subsection{Definition and motivation}
	
	In the present section, we call non-additive process in an Abelian group $V$ a family $(S_{m,n})_{0 \le m < n}$ of elements of $V$.
	We assume $V$ to be endowed with a second countable $\sigma$-algebra.
	In the present section, we study non-additive processes in a second countable Hilbert space $V$, over $\RR$ endowed with a scalar product denoted by $\langle\cdot,\cdot\rangle$ and the associated norm $\|\cdot\|$.

	\begin{Def}[Mixing process]\label{def:mix}
		Let $S = (S_{m,n})_{0 \le m < n}$ be a random process.
		We say that $S$ is time-invariant if it has the same law as the time-shift of $S$, defined as $TS := (S_{m+1, n+1})_{0 \le m < n}$.
		We say that $S$ has no memory if for all $0 \le m$, the process $T^k S := (S_{m+j,m+k})_{0 \le j < k}$ is independent of the joint data of $(S_{j,k})_{0 \le j < k \le m}$.
		We say that $S$ is mixing if it has no memory and is invariant.
	\end{Def}
	
	For now, we have not used the group structure on $V$ to define the probabilistic notions of having no memory and being time invariant. 
	To be able to say anything meaningful on mixing processes, we assume the following moment condition.
	
	\begin{Def}[Almost-additivity]
		Let $S = (S_{m,n})_{0 \le m < n}$ be a random process in a Hilbert space $V$.
		Given $q > 0$ and $C \ge 0$, we say that $S$ is $C$-almost additive in $\mathrm{L}^q$ if for all $l < m < n$, we have:
		\begin{equation}\label{ellq-almost-add}
			\EE\left(\left\| S_{l,n} - S_{l,m} - S_{m,n} \right\|^q\right) \le C.
		\end{equation}
	\end{Def}
	
	From now, let us denote by $\Delta S(l,m,n)$ the quantity $S_{l,n} - S_{l,m} - S_{m,n}$.
	We remind that given a (random or not) sequence $(x_n)_{n \ge 0}\in V^\NN$, and $0 \le m \le n$, we write $x_{m,n} = \sum_{k = m}^{n-1} x_k$ for the partial sum of $x$, that way $(x_{m,n})$ is an additive process \ie $0$-almost additive.
	To avoid confusion without overloading the notations, we use upper case letters for general processes and lower case letters for those that come from sequences of random variables.
	Contrary to additive processes, the distribution law of an almost additive process $(S_{m,n})_{0 \le m \le n}$ can not be deduced from the distribution law of $(S_n)_{n \ge 0}$ or $(\overline{S}_n)_{n \ge 0}$.
	
	Instead of making the strong lack of memory assumption of Definition \ref{def:mix} we could have made the weaker assumption that the law of $S$ is ergodic for $T$ (meaning that all $T$-invariant events have probability $0$ or $1$).
	In \cite{K68}, Kingman proves a law of large numbers for ergodic processes on $\RR$ that are sub-additive \ie if $\EE(S_{0,1})< + \infty)$ and if $\Delta S(l,m,n) \le 0$ almost surely and for all $l < m < n$, then $S_{0,n} / n$ converges almost surely to a non-random limit in $[-\infty, + \infty)$.
	
	In the present work we make the mixing assumption, that is stronger than a simple ergodicity assumption but allows us to give more natural links between the probabilistic behaviour of $S_{0,n}$ and the constants $C$ and $q$ of \eqref{ellq-almost-add}.

	\subsection{Almost additivity of the Cartan projection}
	
	Let us now explain how Theorem \ref{th:Delta-unif} implies that the random family $(\Delta\kappa(\widetilde{\gamma}_{m,n}))_{0 \le m \le n}$, when $\nu$ is strongly irreducible and proximal or $(\Delta\dot\kappa(\widetilde{\gamma}_{m,n}))_{0 \le m \le n}$, when $\nu$ is totally irreducible is almost additive in $\mathrm{L}^q$ as soon as soon as $N(\gamma_0)^{q/2}$ is integrable.
	
	This follows from a simple argument of integration by parts.
	
	\begin{Lem}\label{lem:intergal-square}
		Let $x$ be a real non-negative random variable.
		For all $q > 0$, we have:
		\begin{equation}
			\int_{0}^{+\infty} q t^{q-1} \PP(x > t)^2 dt \le 2 \EE(x^{q/2})^2.
		\end{equation}
	\end{Lem}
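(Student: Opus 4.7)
The plan is to combine the layer-cake identity with a Markov bound to produce exactly the factor of $2$ appearing in the statement. For any $r > 0$ and any non-negative random variable $x$, one has the standard identity $\EE(x^r) = \int_0^{+\infty} r t^{r-1} \PP(x > t)\, dt$ obtained by writing $x^r = \int_0^x r t^{r-1} dt$ and applying Fubini. This will be used twice, once with exponent $r = q/2$.

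The key observation is that by Markov's inequality applied to $x^{q/2}$, we have
\begin{equation*}
\PP(x > t) = \PP(x^{q/2} > t^{q/2}) \le t^{-q/2} \EE(x^{q/2})
\end{equation*}
for all $t > 0$. Multiplying both sides by $\PP(x > t)$ yields the pointwise bound
\begin{equation*}
\PP(x > t)^2 \le t^{-q/2} \EE(x^{q/2}) \, \PP(x > t).
\end{equation*}

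It then remains to integrate. Plugging this bound in and pulling out the constant $\EE(x^{q/2})$ gives
\begin{equation*}
\int_0^{+\infty} q t^{q-1} \PP(x > t)^2 dt \le \EE(x^{q/2}) \int_0^{+\infty} q t^{q/2 - 1} \PP(x > t)\, dt = \EE(x^{q/2}) \cdot 2 \EE(x^{q/2}),
\end{equation*}
using the layer-cake identity with $r = q/2$ (noting $q t^{q/2-1} = 2 \cdot (q/2) t^{q/2-1}$, which produces the factor $2$). This is exactly the claimed inequality, and nothing in the argument depends on the structure of the underlying probability space. There is no real obstacle here: the only mild subtlety is remembering to apply Markov to $x^{q/2}$ rather than to $x$ itself, which is precisely what produces the exponent matching between $\PP(x > t)^2$ and $\EE(x^{q/2})^2$.
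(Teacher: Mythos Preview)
Your proof is correct and is essentially the same as the paper's: both bound one factor of $\PP(x>t)$ by $t^{-q/2}\EE(x^{q/2})$ via Markov's inequality and then integrate the remaining $q t^{q/2-1}\PP(x>t)$ via the layer-cake identity to produce $2\EE(x^{q/2})$. The paper merely presents the factorization $q t^{q-1}\PP(x>t)^2 = 2\bigl(t^{q/2}\PP(x>t)\bigr)\bigl(\tfrac{q}{2}t^{q/2-1}\PP(x>t)\bigr)$ first and phrases the bound as a $\sup \times \int$ (H\"older) estimate, but the content is identical.
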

	
	\begin{proof}
		Note that for all $t$, we have $q t^{q-1} \PP(x > t)^2 = 2 (t^{q / 2} \PP(x > t)) (\frac{q}{2} t^{q / 2 - 1} \PP(x > t))$.
		Therefore, by Hölder's inequality, we have:
		\begin{equation*}
			\int_{0}^{+\infty} q t^{q-1} \PP(x > t)^2 dt \le 2 \sup_{t \ge 0} t^{q / 2} \PP(x > t) \int_{0}^{\infty} \frac{q}{2} t^{q / 2 - 1} \PP(x > t) dt .
		\end{equation*}
		By Markov's inequality applied to $x^{q/2}$, we have $\PP(x > t) \le \EE(x^{q/2}) / t^{q/2}$ for all $t$, therefore, $\sup_{t \ge 0} t^{q / 2} \PP(x > t) \le \EE(x^{q/2})$. 
		Moreover, by integration by parts, we have $\int_{0}^{\infty} \frac{q}{2} t^{q / 2 - 1} \PP(x > t) dt = \EE(x^{q/2})$.
	\end{proof}

	\begin{Lem}\label{lem:ellq-almost}
		Let $\nu$ be a strongly irreducible and proximal probability distribution over $\mathrm{GL}(E)$ and let $(\gamma_n) \sim \nu^{\otimes \NN}$.
		Let $q > 0$ and assume that $\EE(N(\gamma_0)^{q / 2}) < + \infty$. 
		Let $C, \beta > 0$ be as in Theorem \ref{th:Delta-unif} and let $C' :=  2\EE(N(\gamma_0)^{q/2})^2 \sum_{k = 1}^{+ \infty} Ce^{-\beta k}k^{q}$.
		Then the processes $(\kappa(\widetilde{\gamma}_{m,n}))_{0 \le m \le n}$ and $(\Delta\kappa(\widetilde{\gamma}_{m,n}))_{0 \le m \le n}$ are $C'$-almost additive in $\mathrm{L}^q$.
	\end{Lem}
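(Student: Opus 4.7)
The plan is to show that for both choices of process, the three-term discrepancy $\Delta S(l,m,n) := S_{l,n} - S_{l,m} - S_{m,n}$ reduces to the same random variable, namely $\Delta\kappa(\gamma_{l,m}, \gamma_{m,n})$, and then apply the tail bound of Theorem \ref{th:Delta-unif} together with the integration-by-parts estimate of Lemma \ref{lem:intergal-square}.

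First I would observe the following algebraic identity. For $S_{m,n} = \kappa(\gamma_{m,n})$ one has directly $\Delta S(l,m,n) = \kappa(\gamma_{l,n}) - \kappa(\gamma_{l,m}) - \kappa(\gamma_{m,n}) = \Delta\kappa(\gamma_{l,m}, \gamma_{m,n})$. For $S_{m,n} = \Delta\kappa(\widetilde\gamma_{m,n}) = \kappa(\gamma_{m,n}) - \sum_{k = m}^{n-1}\kappa(\gamma_k)$, the sums $\sum\kappa(\gamma_k)$ telescope when forming $\Delta S(l,m,n)$, so we get the exact same quantity $\Delta\kappa(\gamma_{l,m}, \gamma_{m,n})$. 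Hence it suffices to prove the uniform bound
\begin{equation*}
\EE\left(|\Delta\kappa(\gamma_{l,m},\gamma_{m,n})|^q\right) \le C'
\end{equation*}
for all $0 \le l < m < n$.

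Next, using the standard layer-cake representation $\EE(|X|^q) = \int_0^\infty q t^{q-1}\PP(|X| > t)\, dt$ and plugging in the tail bound from Theorem \ref{th:Delta-unif}, I would write
\begin{equation*}
\EE\left(|\Delta\kappa(\gamma_{l,m},\gamma_{m,n})|^q\right) \le \int_0^{+\infty} q t^{q-1} \sum_{k=1}^{\infty} C e^{-\beta k} \PP(N(\gamma_0) > t/k)^2\, dt.
\end{equation*}
Swapping the sum and integral by Tonelli (all terms non-negative) and then making the change of variable $u = t/k$, each summand becomes $Ce^{-\beta k} k^q \int_0^\infty q u^{q-1}\PP(N(\gamma_0) > u)^2\, du$. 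Applying Lemma \ref{lem:intergal-square} to the inner integral with $x = N(\gamma_0)$ gives the bound $2\EE(N(\gamma_0)^{q/2})^2$, which by hypothesis is finite. Summing over $k$ produces exactly $C' = 2\EE(N(\gamma_0)^{q/2})^2 \sum_{k\ge 1} Ce^{-\beta k} k^q$, which converges because the exponential decay dominates the polynomial factor.

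There is no real obstacle here: once Theorem \ref{th:Delta-unif} and Lemma \ref{lem:intergal-square} are available, the proof is essentially a change of variable plus Fubini. The only point that deserves a brief remark is the identification of $\Delta S$ for the two processes, since it is what allows a single estimate on $\Delta\kappa(\gamma_{l,m},\gamma_{m,n})$ to yield both almost-additivity statements simultaneously.
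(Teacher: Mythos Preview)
Your proof is correct and follows essentially the same approach as the paper: the layer-cake formula, the tail bound from Theorem \ref{th:Delta-unif}, Tonelli, the change of variable $u=t/k$, and Lemma \ref{lem:intergal-square}. Your explicit identification of $\Delta S(l,m,n)$ with $\Delta\kappa(\gamma_{l,m},\gamma_{m,n})$ for both processes is a clarification the paper leaves implicit; otherwise the arguments are identical.
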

	
	\begin{proof}
		We want to show that for all $0 \le l < m < n$, we have $\EE(|\Delta\kappa(\gamma_{l,m}, \gamma_{m,n})|^q)\le C'$.
		Let $0 \le l < m < n$ be fixed.
		By Theorem \ref{th:Delta-unif}, for all $t \ge 0$, we have 
		\begin{equation*}
			\PP(|\Delta\kappa(\gamma_{l,m}, \gamma_{m,n})| > t) \le \sum_{k = 1}^{+ \infty} Ce^{-\beta k} \PP(N(\gamma_0) > t / k)^2. 
		\end{equation*}
		By integration by parts and permutation of the sum and integral:
		\begin{align*}
			\EE(|\Delta\kappa(\gamma_{i,j}, \gamma_{j,k})|^q) 
			& = \int_{0}^{+\infty} qt^{q-1}\PP(|\Delta\kappa(\gamma_{i,j}, \gamma_{j,k})| > t) dt \\
			& \le \sum_{k = 1}^{+ \infty} Ce^{-\beta k} \int_{0}^{+\infty} qt^{q-1} \PP(N(\gamma_0) > t / k)^2dt \\
			& \le \sum_{k = 1}^{+ \infty} Ce^{-\beta k}k^{q} \int_{0}^{+\infty} qt^{q-1} \PP(N(\gamma_0) > t)^2dt.
		\end{align*}
		Hence, by Lemma \ref{lem:intergal-square}, we have:
		\begin{equation*}
			\EE(|\Delta\kappa(\gamma_{l,m}, \gamma_{m,n})|^q) \le \sum_{k = 1}^{+ \infty} Ce^{-\beta k}k^{q} 2 \EE(N(\gamma_0)^{q/2})^2. \qedhere
		\end{equation*}
	\end{proof}
	
	To study the higher rank case we simply look at the exterior products representations.
	
	\begin{Lem}\label{lem:ellq-higher}
		Let $\nu$ be a totally irreducible probability distribution over $\mathrm{GL}(E)$ and let $(\gamma_n)\sim \nu^{\otimes\NN}$.
		Let $q > 0$ and assume that $\EE(N(\gamma_0)^{q/ 2}) < + \infty$.
		Then the processes $(\dot\kappa(\widetilde{\gamma}_{m,n}))_{0 \le m \le n}$ and $(\Delta\dot\kappa(\widetilde{\gamma}_{m,n}))_{0 \le m \le n}$ are $\mathrm{L}^q$ almost additive.
	\end{Lem}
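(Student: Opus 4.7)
The plan is to deduce Lemma~\ref{lem:ellq-higher} from the rank-one result, Lemma~\ref{lem:ellq-almost}, applied to each exterior power representation. Writing $\bar g_j := \Wedge^j g \in \mathrm{GL}(\Wedge^j E)$, we have $\overline\kappa_j(g) = \log\|\bar g_j\| = \kappa(\bar g_j)$, so the $j$-th coordinate of $\Delta\overline\kappa(g,h)$ is exactly the scalar norm-cancellation $\Delta\kappa(\bar g_j, \bar h_j)$ for the random walk on $\mathrm{GL}(\Wedge^j E)$ with step distribution $\Wedge^j_*\nu$. By total irreducibility of $\nu$, each $\Wedge^j_*\nu$ is strongly irreducible on its ambient space.

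Next I would check that the moment hypothesis transfers to every exterior power. Submultiplicativity of the operator norm gives $\|\bar g_j\| \le \|g\|^j$ and $\|\bar g_j^{-1}\| \le \|g^{-1}\|^j$, hence $N(\bar g_j) \le j\,N(g) \le d\,N(g)$, and therefore $\EE(N(\bar\gamma_{0,j})^{q/2}) \le d^{q/2}\EE(N(\gamma_0)^{q/2}) < +\infty$. Lemma~\ref{lem:ellq-almost} then produces finite constants $C_1, \dots, C_d$ such that $\EE\bigl(|\Delta\overline\kappa_j(\gamma_{l,m}, \gamma_{m,n})|^q\bigr) \le C_j$ for every $0 \le l < m < n$. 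Since there is a dimensional constant $c_{d,q}$ with $\|x\|^q \le c_{d,q}\sum_j |x_j|^q$ on $\RR^d$, summing over $j$ yields $\mathrm{L}^q$-almost-additivity of $(\overline\kappa(\widetilde\gamma_{m,n}))_{0\le m\le n}$, and through the invertible linear identity $\dot\kappa_j = \overline\kappa_j - \overline\kappa_{j-1}$ the same conclusion transfers to $\dot\kappa$ and $\Delta\dot\kappa$, producing the final constant $C'$ built out of the $C_j$'s.

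The step I expect to require the most care is ensuring that the proximality hypothesis of Lemma~\ref{lem:ellq-almost} holds on each exterior power, since total irreducibility only grants strong irreducibility. For those indices $j$ where $\Wedge^j_*\nu$ fails to be proximal, the Guivarc'h--Raugi dichotomy recalled in the introduction forces the gap $\kappa_j - \kappa_{j+1}$ to be uniformly bounded on $\Gamma_\nu$; this places us in the degenerate regime analogous to the one handled at the end of the proof of Theorem~\ref{th:Delta-unif}, where the relevant norm cancellation becomes uniformly controlled and the $\mathrm{L}^q$-estimate is trivial. I would dispatch this case either by decomposing $\Wedge^j_*\nu$ into its proximal irreducible factors and applying Lemma~\ref{lem:ellq-almost} componentwise, or by a direct comparison that bounds $\Delta\overline\kappa_j$ by $\Delta\overline\kappa_{j-1}$ up to a bounded additive error; in either case the extra contribution is absorbed into $C'$.
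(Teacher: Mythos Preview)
Your overall strategy matches the paper's exactly: apply Lemma~\ref{lem:ellq-almost} to each exterior power, transfer the moment via $N(\Wedge^j g)\le jN(g)$, deal separately with the indices where $\Wedge^j_*\nu$ fails to be proximal using the Guivarc'h--Raugi bounded-gap fact, and finally pass from $\overline\kappa$ to $\dot\kappa$ by the linear relation $\kappa_j=\overline\kappa_j-\overline\kappa_{j-1}$. So the architecture is right.

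The place where your sketch needs repair is precisely the non-proximal case. Your option~(a) does not apply: by total irreducibility $\Wedge^j_*\nu$ is already strongly irreducible, so there are no ``irreducible factors'' to split into; the obstruction is purely proximality. Your option~(b), bounding $\Delta\overline\kappa_j$ by $\Delta\overline\kappa_{j-1}$ up to a bounded error, does not work either: the difference is $\Delta\kappa_j$, and Guivarc'h--Raugi only tells you that $\kappa_j-\kappa_{j+1}$ is bounded on $\Gamma_\nu$ when $j\notin\Theta(\nu)$, hence $|\Delta\kappa_j-\Delta\kappa_{j+1}|$ is bounded, not $|\Delta\kappa_j|$ itself. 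You therefore cannot iterate one step down; you must look on \emph{both} sides.

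The paper fixes this by an interpolation (``barycentric'') argument. Writing $\Theta(\nu)$ for the proximal indices, set $i=\lfloor j\rfloor_{\Theta(\nu)\cup\{0\}}$ and $k=\lceil j\rceil_{\Theta(\nu)\cup\{d\}}$. Since every index in $\{i+1,\dots,k-1\}$ is non-proximal, the successive differences $\Delta\kappa_{i+1},\dots,\Delta\kappa_k$ all agree up to a uniformly bounded error, so each is within a bounded error of their average $\frac{\Delta\overline\kappa_k-\Delta\overline\kappa_i}{k-i}$. Summing from $i+1$ to $j$ gives
\[
|\Delta\overline\kappa_j|\le |\Delta\overline\kappa_i|+|\Delta\overline\kappa_k|+O_d(1),
\]
and both $\Delta\overline\kappa_i$ and $\Delta\overline\kappa_k$ are controlled (either by Lemma~\ref{lem:ellq-almost} when $i,k\in\Theta(\nu)$, or trivially when $i=0$ or $k=d$). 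This is the missing step; once you plug it in, the rest of your argument goes through as written.
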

	
	\begin{proof}
		Let $\Gamma_\nu$ be the smallest closed semi-group of full $\nu$-measure.
		For all $1 \le k \le d$ and for all $g \in \mathrm{GL}(E)$, we write $\overline\kappa_k(g) = \kappa(\bigwedge^k g)$.
		Then for all $k$, we have formally $\overline\kappa_k = \kappa_1 + \cdots + \kappa_k$.
		Let $\Theta(\nu)$ be the set of indices $1 \le k < d$ such that $\bigwedge^k \nu$ is proximal.
		Using the fact that $N \circ \bigwedge^k \le k N$ and by Lemma \ref{lem:ellq-almost} applied to $\bigwedge^{k}\nu$, for all $k \in \Theta(\nu)$, there exists a constant $C_k$ such that for all $0 \le l \le m \le n$, we have:
		\begin{equation}
			\EE\left(|\Delta\overline{\kappa}_k(\gamma_{l,m}, \gamma_{m,n})|^q\right) \le C_k. 
		\end{equation}
		Moreover, by sub-additivity of the determinant, we have $\Delta\overline{\kappa}_d = 0$ on $\mathrm{GL}(E)$.
		
		By \cite[Lemma~3.11]{moi} (credited to Guivarc'h and Raugi \cite{GR89} in the specific setting of products of invertible matrices), for all $k \notin \Theta(\nu)$, there exists a constant $B_k$ such that 
		$0 \le \kappa_{k}(g) - \kappa_{k+1}(g) \le B_k$ for all $g \in \Gamma_\nu$. 
		Therefore,  for all $g, h \in \Gamma_\nu$, we have:
		\begin{equation}\label{beka}
			|\Delta\kappa_k(g,h) - \Delta\kappa_{k + 1}(g,h)| \le 2B_k.
		\end{equation}
		For all $1 \le k \le d$, we write $\lceil k \rceil_{\Theta(\nu)}$ for the smallest element of $\theta(\nu)\cap \{d\}$ that is larger than $k$.
		By \eqref{beka}, for $B = \sum_{k \in\{1, \dots, d -1\}\setminus \Theta(\nu)} B_k$ and for all $0 \le j, k \le d$ such that $\lceil k \rceil_{\Theta(\nu)} = \lceil j \rceil_{\Theta(\nu)}$, we have formally on $\Gamma_\nu$:
		\begin{equation}
			|\Delta\kappa_k - \Delta\kappa_{j}| \le 2B.
		\end{equation}
		We use the convention $\overline{\kappa}_0 = \kappa_0(g) = 0$ for all $g$.
		Now let $j > \lceil 1 \rceil_{\Theta(\nu)}$, let $k = \lceil j \rceil_{\Theta(\nu)}$ and let $i = \lfloor j \rfloor_{\Theta(\nu)}$ be the maximal element of $\{0\} \cup \Theta(\nu)$ such that $i \le j$.
		We have $\Delta\overline\kappa_k - \Delta\overline\kappa_i = \sum_{j = i + 1}^k \Delta\kappa_{j}$
		so by a barycentric inequality, for all $i \le j \le k$, we have:
		\begin{equation*}
			\left| \Delta{\kappa}_j - \frac{1}{k - i}(\Delta\overline\kappa_k - \Delta\overline\kappa_i) \right|  \le 2B
		\end{equation*}
		and taking the partial sum:
		\begin{equation*}
			\left| \Delta\overline{\kappa}_j - \frac{j - i}{k}(\Delta\overline\kappa_k - \Delta\overline\kappa_i)\right|  \le 2B (k  - i).
		\end{equation*}
		and by triangular inequality:
		\begin{equation*}
			|\Delta\overline{\kappa}_j| \le |\Delta\overline{\kappa}_k| + |\Delta\overline{\kappa}_i| + 2B d.
		\end{equation*}
		Therefore, for all $0 \le l \le m \le n$, and for $0 < q \le 1$, by triangular inequality for $|\cdot|^q$, we have:
		\begin{equation*}
			\EE\left(|\Delta\overline{\kappa}_j(\gamma_{l,m}, \gamma_{m,n})|^q\right) \le C_k + C_i + 2Bd,
		\end{equation*}
		and for $q > 1$, by Minkowski's inequality, we have:
		\begin{equation*}
			\EE\left(|\Delta\overline{\kappa}_j(\gamma_{l,m}, \gamma_{m,n})|^q\right)^{1/q} \le C_k^{1/q} + C_i^{1/q} + 2Bd
		\end{equation*}
		with the convention $C_0 = C_d = 0$.
		Therefore, there exist a constant $\overline{C}$ such that for all $0 \le j \le d$, we have:
		\begin{equation}
			\EE\left(|\Delta\overline{\kappa}_j(\gamma_{l,m}, \gamma_{m,n})|^q\right) \le C.
		\end{equation} 
		Moreover, for all $1 \le j \le d$, we have $\Delta{\kappa}_j = \Delta\overline{\kappa}_j - \Delta\overline{\kappa}_{j - 1}$ so by Minkowski's inequality, we have:
		\begin{equation}
			\EE\left(|\Delta{\kappa}_j(\gamma_{l,m}, \gamma_{m,n})|^q\right) \le \max\{2, 2^q\} C.
		\end{equation} 
		Taking the sum coordinates by coordinates, and by Minkowski's inequality again, we have:
		\begin{equation}
			\EE\left(\|\Delta\dot{\kappa}(\gamma_{l,m}, \gamma_{m,n})\|^q\right) \le \max\{d, d^{q/2}\}\max\{2, 2^q\} C. \qedhere
		\end{equation} 
	\end{proof}

	\subsection{A dichotomy formula}
	
	In the present paragraph, we prove a non-probabilistic formula that allows us to view non-additive additive processes as an infinite sum of additive processes.
	
	\begin{Def}[$\Delta$-process]
		Let $(S_{m,n})_{m < n}$ be a process.
		By convention, we set $S_{n,n} = 0$ for all $n$.
		To help formulas fit in their own line, we write $\overline{S}_{n}$ as short for $S_{0,n}$ and $S_n$ as short for $S_{n, n+1}$ for all $n \ge 0$.
		To avoid having to distinguish cases, we use the convention $S_{n,m} := - S_{m,n}$ for all $m < n$ and $S_{n,n} := 0$ for all $n$.
		We use the following notations:
		\begin{itemize}
			\item For all $j \ge 1$ and for all family $(n_i)_{0 \le i \le j} \in \NN^{j + 1}$ of integers, we write $\Delta S(n_0, \dots, n_j) = S_{n_0, n_j} - \sum_{i = 0}^{j-1} S_{n_i, n_{i+1}}$.
			\item We denote by $\Delta S$ the error process associated to $S$, defined as $\Delta S_{m,n} = S_{m, n} - \sum_{j  = m}^{n-1}S_{j}$ for all $0 \le m < n$. 
			\item For all $k \ge 1$, we write $S^k$ for the process defined as $S^k_{m,n} = S_{km,kn}$ for all $0 \le m < n$.
		\end{itemize}
		We say that $S$ is additive if $\Delta S = 0$.
	\end{Def}

	Given a sequence of matrices $(\gamma_n)_{n \ge 0}$, and $S$ defined as $S_{m,n} = \kappa(\gamma_{m,n})$ for all $0 \le m < n$, we have $\Delta S_{m,n} = \Delta\kappa(\widetilde\gamma_{m,n})$.
	
	For all real number $x$, we write $\lfloor x \rfloor$ for the integer part of $x$ \ie the largest integer $n \in \ZZ$ such that $n \le x$.
	Given $r > 0$ and $x \in \RR$, we write $\lfloor x \rfloor_r = r \lfloor x / r \rfloor$.

	Before giving the proof of \eqref{sumSlogi}, let us break down the notations.
	In practice, we are only interested in the quantity $\Delta S(n_0, \dots, n_j)$ when $j \ge 2$ and $n_0 < n_1 < \dots < n_j$. 
	Note that by a telescopic sum argument, for all process $S$ and all family $(n_0, \dots, n_j)$, we have $\Delta S(n_0, \dots, n_j) = \Delta (\Delta S)(n_0, \dots, n_j)$.
	Hence, for all $0 \le m < n$, we have $\Delta S_{m,n} = \Delta S (m, m+1, m+2, \dots, n) = \Delta (\Delta S)_{m,n}$.
	However, we do not have $\Delta (S^k) = (\Delta S)^k$ for $k \ge 2$, since we are not interested in the process $(\Delta S)^k$, we write $\Delta S^k$ for $\Delta (S^k)$, that way for all $m < n$, we have $\Delta S^{k}_{m,n} = S^k_{m,n} - \sum_{j = m}^{n-1} S^k_j = S_{km, k_n} - \sum_{j = m}^{n-1}  S_{kj, kj+k} = \Delta S (km, km+k, \dots, kn)$.
	More generally, for all $(n_i)_{1 \le i \le j}$ and for all $k$, we have:
	\begin{equation}\label{delta-k}
		\Delta S^k(n_0, \dots, n_j) = \Delta S(kn_0, kn_1, \dots, kn_j).
	\end{equation}
	
	In fact it is enough to know the data of $(\Delta S (l,m,n))_{0 \le l < m < n}$ to compute the values of $\Delta S(n_0, \dots, n_j)$ for all $(n_i)_{1 \le i \le j}$, using the formula:
	\begin{equation}\label{triangulation}
		\forall j \ge 2, \forall (n_i) \in \NN^{j+1}, \Delta S (n_0, \dots, n_j) = \Delta S (n_0, \dots, n_{j-1}) + \Delta S (n_0, n_{j-1}, n_j).
	\end{equation} 
	This is a particular case of the fact that for all $1 \le k < j$ and all $(n_i) \in \NN^{j+1}$, we have:
	\begin{equation}\label{mid-trig}
		\Delta S (n_0, \dots, n_j) = \Delta S (n_0, \dots, n_{k}) + \Delta S (n_0, n_{k}, n_j) + \Delta S (n_{k},\dots,  n_j).
	\end{equation} 
	Let us now prove the following formula.
	By convention, we set $\sum_{k = 0}^{-1} x(k) = 0$ for all expression $x(k)$.
	
	\begin{Lem}\label{lem:dichotomy}
		Let $(S_{m,n})_{0 \le m \le n}$ be a family of elements of an Abelian group $V$.
		For all $i, n \ge 0$, we have:
		\begin{equation}
			\overline{S}_n = \sum_{k = 0}^{n-1} S_{k} + \sum_{j = 1}^{+\infty} \Delta S(0,\lfloor n \rfloor_{2^j}, \lfloor n\rfloor_{2^{j-1}}) + \sum_{k = 0}^{\lfloor n / 2^j \rfloor - 1}\Delta S (2^j k, 2^j(k+1/2), 2^j(k+1)). \label{sumSlogi}
		\end{equation}
		Moreover, for all $j > \log_2(n)$, we have $\lfloor n / 2^j \rfloor = \lfloor n \rfloor_{2^j} = 0$ so the sum may be taken over $j \in\{1, \dots, \lfloor \log_2(n)\rfloor\}$.
	\end{Lem}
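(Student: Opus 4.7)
The plan is to rewrite both sides as a telescoping sum indexed by the dyadic scale $j$, where at each scale the contribution is controlled by how the $2^j$-partition of $[0,n]$ fails to be an additive decomposition of the underlying process.

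Concretely, introduce two auxiliary quantities for each $j \ge 0$:
\begin{equation*}
    B_j := S_{0, \lfloor n\rfloor_{2^j}}, \qquad A_j := \sum_{k = 0}^{\lfloor n/2^j\rfloor - 1} S_{2^j k, \, 2^j(k+1)}.
\end{equation*}
Then $B_0 = \overline{S}_n$, $A_0 = \sum_{k=0}^{n-1} S_k$, and for $j > \log_2(n)$ both $\lfloor n\rfloor_{2^j}$ and $\lfloor n/2^j\rfloor$ vanish, so $B_j = A_j = 0$. Hence it suffices to prove the telescoping identity, valid for every $j \ge 1$:
\begin{equation}\label{eq:plan-telescope}
    (B_{j-1} - A_{j-1}) - (B_j - A_j) = \Delta S(0,\lfloor n \rfloor_{2^j}, \lfloor n\rfloor_{2^{j-1}}) + \sum_{k = 0}^{\lfloor n / 2^j \rfloor - 1}\Delta S(2^j k, 2^j(k+1/2), 2^j(k+1)).
\end{equation}
Summing \eqref{eq:plan-telescope} from $j=1$ to $j=\lfloor \log_2(n)\rfloor + 1$ and using $B_0 - A_0 = \overline{S}_n - \sum_{k=0}^{n-1} S_k$ on the left and the vanishing of the final term on the right immediately yields \eqref{sumSlogi}.

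To prove \eqref{eq:plan-telescope}, I distinguish two cases according to the parity of $\lfloor n/2^{j-1}\rfloor$. In the even case $\lfloor n/2^{j-1}\rfloor = 2\lfloor n/2^j\rfloor$, one has $\lfloor n\rfloor_{2^{j-1}} = \lfloor n\rfloor_{2^j}$, so $B_{j-1} = B_j$ and the first error term $\Delta S(0, \lfloor n\rfloor_{2^j},\lfloor n\rfloor_{2^{j-1}})$ vanishes by the convention $S_{m,m}=0$. The remaining identity $A_j - A_{j-1} = \sum_{k=0}^{\lfloor n/2^j\rfloor - 1} \Delta S(2^j k, 2^j(k+1/2), 2^j(k+1))$ follows from bisecting each $2^j$-block into two $2^{j-1}$-blocks via the definition of $\Delta S$. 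In the odd case $\lfloor n/2^{j-1}\rfloor = 2\lfloor n/2^j\rfloor+1$, we gain an extra trailing $2^{j-1}$-block: $A_{j-1}$ picks up the summand $S_{\lfloor n\rfloor_{2^j},\lfloor n\rfloor_{2^{j-1}}}$, while $B_{j-1} - B_j$ by definition equals $S_{\lfloor n\rfloor_{2^j},\lfloor n\rfloor_{2^{j-1}}} + \Delta S(0, \lfloor n\rfloor_{2^j}, \lfloor n\rfloor_{2^{j-1}})$; these two copies of $S_{\lfloor n\rfloor_{2^j},\lfloor n\rfloor_{2^{j-1}}}$ cancel, and the bisection argument handles the remaining $\lfloor n/2^j\rfloor$ full $2^j$-blocks as before.

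The only genuine obstacle is the bookkeeping in the case distinction: one must carefully track which $2^{j-1}$-blocks appearing in $A_{j-1}$ come from bisecting the $2^j$-blocks of $A_j$ and which constitute the extra trailing block, and verify that the two sides match exactly in both parity cases. Once this is done, the identity \eqref{eq:plan-telescope} is purely formal, and the conclusion follows from the telescoping sum as above. The last sentence of the statement---that the sum truncates at $j = \lfloor \log_2(n)\rfloor$---is immediate since for $j > \log_2(n)$ we have $\lfloor n/2^j\rfloor = 0$, making the interior sum empty and forcing $\lfloor n\rfloor_{2^j} = \lfloor n\rfloor_{2^{j-1}} = 0$ (once $j-1 > \log_2(n)$), so the tail $\Delta S$ term vanishes too.
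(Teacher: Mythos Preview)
Your proof is correct and follows essentially the same telescoping scheme as the paper. Your quantity $B_j - A_j$ is precisely the paper's $\Delta S^{2^j}_{0,\lfloor n/2^j\rfloor}$, and your identity \eqref{eq:plan-telescope} is exactly the paper's telescoping step; the only cosmetic difference is that the paper establishes the step by first proving the $j=0$ case as a standalone formula and then applying it to the rescaled process $S^{2^j}$, whereas you prove it directly at every scale via the parity case analysis on $\lfloor n/2^{j-1}\rfloor$.
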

	
	\begin{proof}
		First note that for all $n$, we have $\overline{S}_n = S_{0,n} = \sum_{k  = 0}^{n-1} S_n + \Delta S_{0,n}$.
		By \eqref{mid-trig} applied to $j = k+2$ and $(n_i)_i = (i)_i$, for all $k \ge 0$, we have $\Delta S_{0, 2k+2}- \Delta S_{0, 2k } = \Delta S(0, 2k, 2k+2) + \Delta S (2k, 2k +1, 2k +2)$. By induction, we have for all $n \ge 0$:
	\begin{equation*}
		\Delta{S}_{0,\lfloor n\rfloor_2} = \sum_{k = 0}^{\lfloor n / 2 \rfloor - 1}(\Delta S(0, 2k, 2k+2) + \Delta S (2k, 2k +1, 2k +2)).
	\end{equation*}
	Moreover, by \eqref{triangulation} and by induction, we have:
	\begin{equation*}
		\sum_{k = 0}^{\lfloor n / 2 \rfloor - 1}\Delta S(0, 2k, 2k+2) = \Delta S(0,2, 4, \dots, \lfloor n\rfloor_2) = \Delta S^2_{0,\lfloor n / 2\rfloor}.
	\end{equation*}
	Therefore, for all $n \ge 0$, we have:
	\begin{equation}\label{sum2}
		\Delta{S}_{0,n} = \Delta S_{0, \lfloor n\rfloor_2, n} + \sum_{k = 0}^{\lfloor n / 2 \rfloor - 1}\Delta S (2k, 2k +1, 2k +2) + \Delta S^2_{0,\lfloor n / 2\rfloor}.
	\end{equation}
	Now we have show that \eqref{sum2} holds for all process $S$.
	Therefore, it holds for $S^2$, and in fact for $S^{2^j}$ for all $j$.
	So for all $n, j \ge 0$, we have:
	\begin{multline*}
		\Delta S^{2^j}_{0,\lfloor n / 2^{j}\rfloor} = \Delta S^{2^j}(0,\lfloor\lfloor n / 2^{j}\rfloor\rfloor_2, \lfloor n / 2^{j}\rfloor) \\ + \sum_{k = 0}^{\lfloor n / 2^{j+1} \rfloor - 1}\Delta S^{2^j} (2k, 2k +1, 2k +2) + \Delta S^{2^{j+1}}_{0,\lfloor n / 2^{j+1}\rfloor}.
	\end{multline*}
	Now we multiply everything by $2^k$, following \eqref{delta-k} and we get:
	\begin{multline}\label{sumj}
		\Delta S^{2^j}_{0,\lfloor n / 2^{j}\rfloor} = \Delta S(0,\lfloor n \rfloor_{2^{j+1}}, \lfloor n \rfloor_{2^j})\\ +\sum_{k = 0}^{\lfloor n / 2^{j+1} \rfloor - 1}\Delta S(2^{j}2k, 2^{j}(2k +1), 2^j(2k +2)) + \Delta S^{2^{j+1}}_{0,\lfloor n / 2^{j+1}\rfloor}.
	\end{multline}
	Note that for all $0\le n < 2^{j}$, we have $\Delta S^{2^{j}}_{0,\lfloor n / 2^{j}\rfloor} = \Delta S^{2^{j}}_{0,0} = 0$ and for $2^{j} \le n < 2^{j+1}$, we have $\Delta S^{2^{j}}_{0,\lfloor n / 2^{j}\rfloor} = \Delta S^{2^{j}}_{0,1} = 0$.
	By a telescopic argument, for all $n \ge 0$, we have:
	\begin{equation}
		\Delta S_{0,n} = \sum_{j = 0}^{\lfloor \log_2(n)\rfloor - 1} (\Delta S^{2^j}_{0,\lfloor n / 2^{j}\rfloor} - \Delta S^{2^{j+1}}_{0,\lfloor n / 2^{j+1}\rfloor}) = \sum_{j = 1}^{\lfloor \log_2(n)\rfloor} (\Delta S^{2^{j-1}}_{0,\lfloor n / 2^{j-1}\rfloor} - \Delta S^{2^{j}}_{0,\lfloor n / 2^{j}\rfloor}).
	\end{equation}
	By replacing $j$ by $j-1$ in \eqref{sumj}, we have:
	\begin{equation*}
		\Delta S^{2^{j-1}}_{0,\lfloor n / 2^{j-1}\rfloor} - \Delta S^{2^{j}}_{0,\lfloor n / 2^{j}\rfloor} = \Delta S(0,\lfloor n \rfloor_{2^{j}}, \lfloor n \rfloor_{2^{j-1}}) + \sum_{k = 0}^{\lfloor n / 2^{j} \rfloor - 1}\Delta S(2^{j}k, 2^{j}(k +1/2), 2^j(k +1)).
	\end{equation*}
	Therefore, for all $n$, we have:
	\begin{equation}
		\Delta S_{0,n} = \sum_{j = 1}^{\lfloor \log_2(n)\rfloor} \Delta S(0,\lfloor n \rfloor_{2^{j}}, \lfloor n \rfloor_{2^{j-1}}) + \sum_{k = 0}^{\lfloor n / 2^{j} \rfloor - 1}\Delta S(2^{j}k, 2^{j}(k +1/2), 2^j(k +1)).
	\end{equation}
	We conclude using the fact that $\overline{S}_n = \sum_{k  = 0}^{n-1} S_n + \Delta S_{0,n}$ and that the terms of index $j > \log_2(n)$ are all zero.
\end{proof}


\subsection{Weak law of large numbers}

In the present paragraph, we prove the weak law of large numbers for almost additive processes in $\mathrm{L}^q$ for $0 < q < 2$.	
Let us fix $V$ to be a real second countable Hilbert space, endowed with a scalar product $\langle \cdot, \cdot\rangle$ and the associated norm $\|\cdot\|$.
In the present article, we only use the results for $V = \RR^d$ but the proofs works the same in infinite dimension.
We remind that second countable Hilbert spaces are all isometric to $\ell^2(\NN) = \{(x_i)_{i \ge 0}, \sum x_i^2 < + \infty\}$.
We do not need to consider larger Hilbert spaces because all measurable events can be expressed in term of a second countable quotient.

\begin{Def}[Fractional variance]
	Let $1 \le q \le 2$ and let $x \in V$ be a random variable.
	We write:
	\begin{equation}
		\mathrm{Var}_q(x) := \EE(\|x - \EE_0(x)\|^q).
	\end{equation}  
	That way $\mathrm{Var}_q(x) = + \infty$ when $x$ has infinite moment of order $q$.
\end{Def}

With our notations, $\mathrm{Var}_2$ is the classical variance. 
We remind that the variance of a sum of independent random variables is equal to the sum of their variances.
For $1 \le q < 2$, this equality becomes an inequality.

\begin{Lem}\label{lem:var-q}
	Let $x,y \in V$ be two random variables and let $1 \le q \le 2$.
	By Minkowski's inequality, we have $\mathrm{Var}_q(x+y)^{1/q} \le \mathrm{Var}_q(x)^{1/q} + \mathrm{Var}_q(y)^{1/q}$.
	If moreover $x$ and $y$ are independent, then $\mathrm{Var}_q(x+y) \le \mathrm{Var}_q(x) + \mathrm{Var}_q(y)$.
\end{Lem}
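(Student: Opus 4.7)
The plan is to reduce both statements to the centred case and handle them separately. Writing $X := x - \EE(x)$ and $Y := y - \EE(y)$, linearity of expectation yields $\mathrm{Var}_q(x) = \EE\|X\|^q$, $\mathrm{Var}_q(y) = \EE\|Y\|^q$ and $\mathrm{Var}_q(x+y) = \EE\|X+Y\|^q$, and independence is preserved. If any of the three $L^q$-moments is infinite the corresponding claim is trivial, so I will assume $X, Y \in L^q(\Omega, V)$.

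For the first bound I would simply invoke the triangle inequality in the Bochner space $L^q(\Omega, V)$, which is a Banach space since $q \ge 1$: one gets $\|X+Y\|_{L^q(V)} \le \|X\|_{L^q(V)} + \|Y\|_{L^q(V)}$, and raising to the $q$-th power is exactly the Minkowski claim. No independence is used here. For the second bound, under independence, I would interpolate between the two endpoints. At $q=2$ the Hilbert-space identity $\|X+Y\|^2 = \|X\|^2 + 2\langle X,Y\rangle + \|Y\|^2$ combined with $\EE\langle X,Y\rangle = \langle \EE X, \EE Y\rangle = 0$ gives equality. At $q=1$ the claim coincides with the Minkowski step. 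For $q \in (1,2)$ the natural tool is the L\'evy--Khintchine integral representation $\|v\|^q = c_q \int (1-\cos\langle u,v\rangle)\, d\mu_q(u)$ on any finite-dimensional subspace of $V$, with $\mu_q$ a positive symmetric Radon measure (the reduction to finite dimension is justified by Fubini since $X,Y$ almost surely lie in a separable subspace). Substituting and using independence rewrites
\[
\EE\|X+Y\|^q - \EE\|X\|^q - \EE\|Y\|^q \;=\; -\,c_q \int \mathrm{Re}\bigl[(1-\phi_X(u))(1-\phi_Y(u))\bigr]\, d\mu_q(u),
\]
where $\phi_X, \phi_Y$ are the characteristic functions of $X, Y$, so the desired bound is precisely the non-negativity of this integral.

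The hard part will be the genuinely non-symmetric case: the imaginary parts of $\phi_X, \phi_Y$ produce a signed cross term $\mathrm{Im}\,\phi_X(u)\,\mathrm{Im}\,\phi_Y(u)$ whose sign is not controlled pointwise. When $X$ and $Y$ are symmetric these imaginary parts vanish and pointwise positivity $(1-\phi_X)(1-\phi_Y) \ge 0$ is immediate. In the general centred case one must exploit that $\mathrm{Im}\,\phi_X(u)$ vanishes at $u = 0$ to order at least two (because $\EE X = 0$) together with the behaviour of the weight $\mu_q$ near the origin, in order to absorb the cross term into the positive real-part contribution; matching these orders is the technical heart of the argument, and is what restricts the statement to the range $1 \le q \le 2$.
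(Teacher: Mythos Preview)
Your Minkowski step is fine and matches the paper's intent. The second claim, however, you leave genuinely incomplete, and the heuristic you offer for closing it does not hold.

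The L\'evy--Khintchine identity and the resulting formula
\[
\EE\|X+Y\|^q - \EE\|X\|^q - \EE\|Y\|^q \;=\; -c_q\int \mathrm{Re}\bigl[(1-\phi_X(u))(1-\phi_Y(u))\bigr]\,d\mu_q(u)
\]
are correct, but your plan to ``absorb the cross term $\mathrm{Im}\,\phi_X\,\mathrm{Im}\,\phi_Y$ into the positive real-part contribution'' is based on a false premise. Centredness with only an $\mathrm{L}^q$ assumption ($q<2$) gives $|\mathrm{Im}\,\phi_X(u)| \le C|u|^q$, via $|\sin t - t| \lesssim |t|^q$; it does \emph{not} give order two as you assert. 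This is the \emph{same} order as $1-\mathrm{Re}\,\phi_X(u)$ near the origin, so the cross term and the positive term are comparable there and no pointwise domination is available. Completing the argument along these lines would require a genuinely global cancellation in the integral, which you have not supplied.

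The paper bypasses all of this with a single pointwise inequality, valid in any Hilbert space for $1<q\le 2$:
\[
\|x+y\|^q \;\le\; \|x\|^q + \|y\|^q + q\,\|y\|^{q-2}\langle y,x\rangle.
\]
One normalizes $\|y\|=1$, writes $\|x+y\|^2 = \|x\|^2 + 2\langle x,y\rangle + 1$, and reduces to concavity of $t\mapsto t^{q/2}$. Taking expectations with $X$ centred and independent of $Y$ kills the linear term directly, since $\EE\bigl[\|Y\|^{q-2}\langle Y,X\rangle\bigr] = \EE\bigl[\|Y\|^{q-2}\langle Y,\EE X\rangle\bigr] = 0$, and the inequality $\EE\|X+Y\|^q \le \EE\|X\|^q + \EE\|Y\|^q$ follows immediately. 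No characteristic functions, no integral to sign, and the Hilbert structure is used exactly once, to expand $\|x+y\|^2$.
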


We have two extremal case: for $q = 1$, the equality stays true without the independence assumption and for $q = 2$ the inequality is in fact an equality.
The proof is based on the following functional inequality:
\begin{equation}\label{qge1}
	\forall 1 < q \le 2, \forall x, y \in V,\; \|x + y\|^q \le \|x\|^q + \|y\|^q + q \|y\|^{q-2} \langle y, x \rangle,
\end{equation}
where $\langle\cdot, \cdot\rangle$ denotes the scalar product associated to the norm $\|\cdot\|$ on $V$.
To prove \eqref{qge1}, we consider a decomposition $x = x_1 y + x_2$, with $x_1 \in \RR$ and $\langle y, x_2 \rangle = 0$.
Note that \eqref{qge1} is trivial when $y = 0$ and homogeneous, so we may assume that $\|y\| = 1$.
That way, we have $\|x + y\|^2 = (x_1 + 1)^2 + \|x_2\|^2$ and \eqref{qge1} reduces to:
\begin{equation*}
	(\|x\|^2 + 1 + 2 x_1)^{q/2} \le (\|x\|^2)^{q/2} + q x_1 + 1,
\end{equation*}
which follows from the concavity of the map $t \mapsto t^{q/2}$ on $[0, + \infty)$.

From \eqref{qge1}, we deduce that $\EE(\|x + y\|^q) \le \EE(\|x\|^q) + \EE(\|y\|^q)$ when $x$ is centred and independent of $y$, which proves Lemma \ref{lem:var-q}. 
As a consequence, we know that given an i.i.d. sequence $(x_n)_{n \ge 0}$ in a Hilbert space $V$, for all $1 \le q \le 2$, we have:
\begin{equation}
	\mathrm{Var}_q(\overline{x}_n) \le n \mathrm{Var}_q(x_0).
\end{equation}
When $q = 2$ this is an equality and when $q < 2$, we moreover have $\mathrm{Var}_q(\overline{x}_n) /n \to 0$ as we will see in the next Lemma.

We remind that for all $x,y \in V$, by concavity of the map $t \mapsto t^q$ on $[0, + \infty)$, we have:
\begin{equation}\label{qle1}
	\forall 0 < q \le 1,\, \|x + y\|^q \le \|x\|^q + \|y\|^q.
\end{equation}
From this, we deduce that $\EE(\|x + y\|^q) \le \EE(\|x\|^q) + \EE(\|y\|^q)$ for any pair of random variables $x$ and $y$.

Given a random variable $x$ that is not centred, we do not necessarily have $\mathrm{Var}_q(x) \le \EE(\|x\|^q)$ but by a naive application of Minkowski's inequality, we always have $\mathrm{Var}_q(x) \le \EE(\|x\|^q) +  \|\EE(x)\|^q \le 2\EE(\|x\|^q)$.
In fact the infimum of $\EE(\|x - b\|^q)$, which is sometimes used as $q$-variance proxy, is reached for a value of $b$ called $q$-barycentre of the law of $x$. 
The $2$-barycentre is the usual mean, the $1$-barycentre is the median, which is not necessarily 

\begin{Lem}[Weak law of large numbers]\label{lem:ldev-poly}
	Let $(x_n)_{n \ge 0}$ be i.i.d. random variables in a Hilbert space $V$. 
	Let $q \in (0,2)$ and assume that $\EE(\|x_0\|^q) < + \infty$.
	Let $b = \EE(x_0)$ if $q \ge 1$ and $0$ otherwise.
	Then $\EE(\|\overline{x}_n - n b\|^q) / n \to 0$ or in other words, $(\overline{x}_n - n b)/n^{1/q}$ converges to $0$ in probability and in $\mathrm{L}^q$.
\end{Lem}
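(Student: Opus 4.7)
Proof plan. I would follow the classical truncation argument for the weak law of large numbers, using the dichotomy between the two regimes $q<1$ and $q\ge 1$ captured by the inequalities \eqref{qge1} and \eqref{qle1}. Fix a parameter $\varepsilon>0$; for each $n$, truncate at level $t_n := \varepsilon n^{1/q}$, writing $x_k = y_k + z_k$ with $y_k := x_k\mathds{1}_{\|x_k\|\le t_n}$ and $z_k := x_k - y_k$. The goal is to show that, for every $\varepsilon>0$, $\limsup_n \EE(\|\overline{x}_n - nb\|^q)/n$ is bounded by a quantity that vanishes as $\varepsilon\to 0$.

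For the tail $\overline{z}_n$: since $\|x_0\|^q$ is integrable and $t_n\to +\infty$, the dominated convergence theorem yields $\EE(\|x_0\|^q \mathds{1}_{\|x_0\|>t_n})\to 0$. When $q<1$, inequality \eqref{qle1} applied summand by summand gives $\EE(\|\overline{z}_n\|^q)\le n\EE(\|z_0\|^q) = n\cdot o(1)$. When $q\ge 1$, I first center and apply Lemma \ref{lem:var-q} to the independent centered increments $z_k-\EE(z_k)$ to obtain $\EE(\|\overline{z}_n - n\EE(z_0)\|^q)\le 2^q n\EE(\|z_0\|^q) = n\cdot o(1)$. So, after dividing by $n$, the tail contribution is $o_n(1)$, uniformly in $\varepsilon$.

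For the body $\overline{y}_n$, the trick is to climb up to $L^2$, exploiting the pointwise boundedness $\|y_0\|\le t_n$. The bound $\|y_0\|^2 \le t_n^{2-q}\|x_0\|^q$ yields $\EE(\|y_0\|^2)\le t_n^{2-q}\EE(\|x_0\|^q)$, whence by $L^2$-orthogonality of independent centered variables and Jensen's inequality for the concave map $t\mapsto t^{q/2}$,
\begin{equation*}
\EE(\|\overline{y}_n - n\EE(y_0)\|^q) \le \EE(\|\overline{y}_n - n\EE(y_0)\|^2)^{q/2} \le (n\,t_n^{2-q}\EE(\|x_0\|^q))^{q/2} = \varepsilon^{q(2-q)/2}\,n\,\EE(\|x_0\|^q)^{q/2}.
\end{equation*}
Divided by $n$, this gives the bound $\varepsilon^{q(2-q)/2}\EE(\|x_0\|^q)^{q/2}$, which vanishes as $\varepsilon\to 0$ since the exponent $q(2-q)/2$ is positive on $(0,2)$.

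It remains to recombine the pieces. For $q\ge 1$, $b = \EE(y_0) + \EE(z_0)$, so $\overline{x}_n - nb$ is the sum of the two centered blocks and Minkowski's inequality in $L^q$ closes the proof. For $q<1$, where $b = 0$, the new subtlety is that the truncation drift $n\EE(y_0)$ has to be controlled on its own: from $\|\EE(y_0)\|\le\EE(\|y_0\|)\le t_n^{1-q}\EE(\|x_0\|^q)$ (using $\|y_0\|\le t_n^{1-q}\|x_0\|^q$ pointwise), one deduces $n^q\|\EE(y_0)\|^q \le \varepsilon^{q(1-q)} n\,\EE(\|x_0\|^q)^q$, and applying \eqref{qle1} once more to split $\overline{y}_n = (\overline{y}_n - n\EE(y_0)) + n\EE(y_0)$ shows this term also vanishes as $\varepsilon\to 0$, since $q(1-q)>0$ for $0<q<1$. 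The only genuine obstacle is this bookkeeping between the two regimes: when $q<1$ there is no honest mean of $x_0$ to centre against, so one must argue instead that the drift produced by the truncation is itself negligible.
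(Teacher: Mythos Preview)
Your proof is correct and takes a genuinely different route from the paper's.

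The paper writes $\EE(\|\overline{x}_n\|^q)/n$ as the layer-cake integral $\int_0^\infty \PP(\|\overline{x}_n\|>(nt)^{1/q})\,dt$, truncates at the \emph{running} level $(nt)^{1/q}$, and bounds the integrand by a union bound on the maximum plus a Chebyshev estimate on the truncated sum; the integral is then split according to whether $nt$ is large or small and finished by dominated convergence, with a separate computation of $\int b_u u^{-1/q}\,du$ in each of the cases $q<1$, $q=1$, $q>1$. Your argument instead fixes a single truncation level $t_n=\varepsilon n^{1/q}$, bounds the $L^q$ moment of the centred truncated sum via Jensen from its $L^2$ moment (which already lives at the right scale $n$), handles the tail by $\eqref{qle1}$ or Lemma~\ref{lem:var-q} and dominated convergence, and takes $\varepsilon\to 0$ at the end; the drift $n\EE(y_0)$ for $q<1$ is dispatched by the pointwise bound $\|y_0\|\le t_n^{1-q}\|x_0\|^q$. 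The trade-off: the paper's tail-probability approach gives slightly more (it controls the whole distribution function, not just the $q$-th moment), while your two-parameter limit $n\to\infty$ then $\varepsilon\to 0$ is shorter, avoids the case split on $q$ for the drift integral, and uses only the moment inequalities \eqref{qge1}--\eqref{qle1} already recorded in the paper.
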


\begin{proof}
	Up to a translation, we may always assume that $b = 0$.
	We want to show, using the dominated convergence Theorem, that:
	\begin{equation}\label{shcb}
		\EE(\|\overline{x}_n\|^q) / n = \int_{t = 0}^{+ \infty} \PP(\|\overline{x}_n\| > (nt)^{1/q}) dt \underset{n \to \infty}{\longrightarrow} 0.
	\end{equation}
	For that, we use the following decomposition.
	For all $k \in\NN$ and for all $s \ge 0$, we write $y^s_k = x_k \mathds{1}_{\|x_k\|^q \le s}$.
	Note that for all $n$, and for all $s$, if $\|x_k\| \le s^{1/q}$ for all $k \le n$, then $\overline{x}_n =  \overline{y}^{s}_n$.
	Therefore, for all $n \ge 1$ and all $t > 0$, we have:
	\begin{equation}\label{dvnjin}
		\PP(\|\overline{x}_n\| > (nt)^{1/q}) \le \PP\left(\max_{0 \le k < n} \|x_k\| > (nt)^{1/q}\right) + \PP(\|\overline{y}^{nt}_n\| > (nt)^{1/q}).
	\end{equation}
	To control the first term, note that
	\begin{equation*}
		\PP\left(\max_{0 \le k < n}\|x_k\| > (nt)^{1/q}\right) \le \sum_{k = 0}^{n-1}\PP\left(\|x_k\| > (nt)^{1/q}\right) \le n \PP(\|x_0\|^q > nt)
	\end{equation*}
	Therefore, we have:
	\begin{align*}
		\int_{0}^{+\infty}\PP\left(\max_{0 \le k < n}\|x_k\| > (nt)^{1/q}\right) dt & \le \int_{0}^{+\infty}\min\{1, n \PP(\|x_0\|^q > nt)\} dt \\
		& \le \int_{0}^{+\infty}\min\{1 / n, \PP(\|x_0\|^q > t)\} dt
	\end{align*}
	Moreover $\int_{0}^{+\infty}\PP(\|x_0\|^q > t)dt$ is finite and $1/n \to 0$ so by dominated convergence, we have:
	\begin{equation}\label{svgykbignkc}
		\int_{0}^{+ \infty}\PP\left(\exists k < n,\; \|x_k\| > (nt)^{1/q}\right) dt \underset{n \to + \infty}{\longrightarrow} 0.
	\end{equation}
	
	Let us now bound the second term.
	The random variables $(y^{nt}_k)_{0 \le k \le n}$ are i.i.d. and $\overline{y}_n^{nt} = \sum_{k = 0}^{n-1} y^{nt}_k$ so we have:
	\begin{gather*}
		\EE\left(\overline{y}_n^{nt}\right) = n \EE\left(y_0^{nt}\right) \quad \text{and} \quad
		\mathrm{Var}\left(\overline{y}^{nt}_n\right) = n \mathrm{Var}\left(y^{nt}_0\right)
	\end{gather*}
	For all $s > 0$, write $b_{s} := \|\EE({y}^{s}_0)\|$ and $v_{s} := \mathrm{Var}(\overline{y}^{s}_0)$.
	Then, by triangular  inequality for $\|\cdot\|$ and by Chebyshev's inequality, for all $n,t$, we have:
	\begin{equation}\label{chebyjhqsvk}
		\PP(\|\overline{y}^{nt}_n\| > (nt)^{1/q}) \le ((nt)^{1/q} -  n b_{nt})_+^{-2} n v_{nt}.
	\end{equation}
	Here we use the notation $(s)_+ := \max\{s,0\}$ and $0^{-1} = + \infty$.
	For $q < 1$, we use the estimate:
	\begin{equation}\notag
		b_{nt} \le \EE(\|y^{nt}_0\|) \le \int_{0}^{nt^{1/q}} \PP(\|x_0\| > s) ds
	\end{equation}
	For $q \ge 1$, we rely on the fact that $\EE(x_0) = 0$ to use the estimate:
	\begin{equation*}
		b_{nt} \le \EE(\|x_0 - y^{nt}_0\|) = \int_{nt^{1/q}}^{+ \infty} \PP(\|x_0\| > s) ds 
	\end{equation*}
	By integration by parts, for $q > 1$, we have:
	\begin{align*}
		\int_0^{+ \infty}  \left(\int_{u^{1 / q}}^{+ \infty} \PP(\|x_0\| > s) ds\right) u^{- 1/ q} du & = \int_0^{+ \infty} \int_0^{s^q} u^{- 1/ q} du  \PP(\|x_0\| > s) ds\\
		& = \int_0^{+ \infty} \frac{s^{q - 1}}{1 - 1 / q}  \PP(\|x_0\| > s) ds \\
		& = \frac{1}{q - 1} \EE(\|x_0\|^q).
	\end{align*}
	For $q < 1$, we have
	\begin{align*}
		\int_0^{+ \infty}  \left(\int_{0}^{u^{1 / q}} \PP(\|x_0\| > s) ds\right) u^{- 1/ q} du & = \int_0^{+ \infty} \int_{s^q}^{+\infty} u^{- 1/ q} du  \PP(\|x_0\| > s) ds\\
		& = \int_0^{+ \infty} \frac{s^{q - 1}}{1 / q - 1}  \PP(\|x_0\| > s) ds \\
		& = \frac{1}{1 - q} \EE(\|x_0\|^q).
	\end{align*}
	In both cases, we have:
	\begin{equation}
		\int_0^{+ \infty} b_u u^{- 1/ q} du \le \frac{1}{|q - 1|} \EE(\|x_0\|^q)
	\end{equation}
	and therefore we have $\lim_{u \to + \infty} b_u u^{1- 1 /q} = 0$.
	Finally, for $q = 1$, we have $\lim_{s \to  + \infty} b_s = \|\EE(x_0)\| = 0$.
	Therefore, there exists a constant $C \ge 1$ such that for all $u \ge C$, we have $u b_{u} \le u^{1/q} / 2$.
	So for all $t \ge C$ and for all $n \ge 1$, we have $n b_{nt} \le (nt)^{1/ q} / 2$.
	For all values of $q < 2$, we use the estimate:
	\begin{equation*}
		v_{u}  \le \EE(\|y_0^u\|^2) =  \int_{0}^{u} 2s \PP(\|x_0\| > s) ds.
	\end{equation*}
	Therefore, for all $t \ge C$ and for all $n \ge 1$, we have:
	\begin{equation}
		((nt)^{1/q} -  n b_{nt})^{-2} n v_{nt} \le 8(nt)^{-2/q}\int_{0}^{(nt)^{1/q}} s \PP(\|x_0\| > s) ds.
	\end{equation}
	We integrate in $t$ and for all $n \ge 1$ we have:
	\begin{align*}
		\int_{C}^{+ \infty}((nt)^{1/q} -  n b_{nt})^{-2} n v_{nt} dt & \le 8 \int_{0}^{+ \infty}(nt)^{-2/q}\int_{0}^{(nt)^{1/q}} s \PP(\|x_0\| > s) ds dt \\
		& \le 8\int_{0}^{\infty} s  \PP(\|x_0\| > s) \int_{s^q / n}^{\infty} (nt)^{-2/q} dt ds \\
		& \le 8\int_{0}^{\infty} \frac{s^{q-1}  \PP(\|x_0\| > s)}{(1-2/q)n} ds\\
		& \le \frac{8}{(q - 2)n}\EE(\|x_0\|^q).
	\end{align*}
	Moreover for all $t > 0$, we eventually have $nt \ge C$ and therefore, $((nt)^{1/q} -  n b_{nt})_+^{-2} n v_{nt} \to 0$ for all $t$ so we have:
	\begin{equation*}
		\int_{0}^{C}\min\{1,((nt)^{1/q} -  n b_{nt})^{-2} n v_{nt}\} dt \to 0,
	\end{equation*}
	by dominated convergence.
	Then by \eqref{chebyjhqsvk}, we have:
	\begin{equation}\label{skdbvj}
		\int_{0}^{+ \infty}\PP(\|\overline{y}^{n,t}_n\| > (nt)^{1/q}) dt \le \int_{0}^{+ \infty}\min\{1,((nt)^{1/q} -  n b_{nt})_+^{-2} n v_{nt}\} dt \to 0.
	\end{equation}
	And by \eqref{dvnjin}, we have
	\begin{equation*}
		\EE(\|\overline{x}_n\|^q) / n \le \int_{0}^{+ \infty}\PP(\|\overline{y}^{n,t}_n\| > (nt)^{1/q}) dt  + \int_{0}^{+ \infty}\PP(\max_{0 \le k < n}\|x_k\| > (nt)^{1/q}) dt 
	\end{equation*}
	And by \eqref{svgykbignkc} an \eqref{skdbvj}, both terms have limit $0$.
\end{proof}

\begin{Th}[Weak law of large numbers without drift for almost additive processes]\label{almost-wlln-no-drift}
	Let $V$ be a real Hilbert space.
	Let $0 < q < 1$ and $C_q \ge 0$.
	Let $(\mathrm{S}_{m,n})_{0 \le m \le n}$ be a $V$-valued mixing process that is $C_q$-almost-additive in $\mathrm{L}^q$.
	Assume also that $\EE(\|S_0\|^q) < + \infty$.
	Then we have:
	\begin{equation*}
		\lim_n \EE(\|\overline{S}_{n}\|^q)/ n = 0.
	\end{equation*}
	Moreover, for all $n \ge 1$, we have:
	\begin{equation}\label{dom-q}
		\EE(\|\overline{S}_{n}\|^q) \le n \left(\EE(\|S_0\|^q) + C_q \sum_{j \ge 1} 2^{-j}\right) + \lfloor\log_2(n)\rfloor C_q.
	\end{equation}
\end{Th}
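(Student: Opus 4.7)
The plan is to combine the dichotomy formula of Lemma \ref{lem:dichotomy} with the subadditivity $\|x+y\|^q \le \|x\|^q + \|y\|^q$ valid for $0 < q \le 1$ (inequality \eqref{qle1}), and then exploit the mixing assumption scale by scale. First, Lemma \ref{lem:dichotomy} writes $\overline{S}_n$ as the sum of the i.i.d.\ partial sum $\sum_{k=0}^{n-1} S_k$, at most $\lfloor \log_2 n\rfloor$ boundary terms $\Delta S(0, \lfloor n\rfloor_{2^j}, \lfloor n\rfloor_{2^{j-1}})$, and, for each $1 \le j \le \lfloor \log_2 n\rfloor$, a scale-$j$ block sum of $\lfloor n/2^j\rfloor$ terms of the form $\Delta S(2^j k, 2^j(k+1/2), 2^j(k+1))$. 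Since $0<q<1$, applying \eqref{qle1} term-by-term gives a bound on $\EE(\|\overline{S}_n\|^q)$ as a sum of the $\mathrm{L}^q$-moments of these pieces. Each three-point defect is bounded in $\mathrm{L}^q$ by $C_q$ by $\mathrm{L}^q$-almost additivity, and by \eqref{qle1} again $\EE(\|\sum_{k<n}S_k\|^q)\le n\EE(\|S_0\|^q)$. Collecting contributions yields the quantitative bound \eqref{dom-q}: the boundary layer contributes $\lfloor\log_2 n\rfloor C_q$, and scale $j$ contributes at most $\lfloor n/2^j\rfloor C_q \le n\,2^{-j}C_q$, summing to $nC_q$ over $j\ge 1$.

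To upgrade this uniform bound to $\EE(\|\overline{S}_n\|^q)/n \to 0$, the idea is to apply the weak law of large numbers (Lemma \ref{lem:ldev-poly}) with drift $b=0$ separately at every scale. The sequence $(S_k)_{k\ge 0}=(S_{k,k+1})_{k\ge 0}$ is i.i.d.\ by the mixing hypothesis, so Lemma \ref{lem:ldev-poly} gives $\EE(\|\sum_{k<n}S_k\|^q)/n\to 0$. The boundary layer is $O(\log n)=o(n)$ and causes no trouble. For each fixed $j\ge 1$, I would use that, by time-invariance and the no-memory property, the blocks of $S$ restricted to the disjoint intervals $[2^j k, 2^j(k+1)]$ for $k\ge 0$ are i.i.d., so in particular the single-scale defects $X_k^{(j)} := \Delta S(2^j k, 2^j(k+1/2), 2^j(k+1))$ form an i.i.d.\ sequence with $\EE(\|X_0^{(j)}\|^q)\le C_q$. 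Writing $a_{j,n}:= \EE(\|\sum_{k=0}^{\lfloor n/2^j\rfloor-1} X_k^{(j)}\|^q)$, Lemma \ref{lem:ldev-poly} applied to $(X_k^{(j)})_k$ gives $a_{j,n}/\lfloor n/2^j\rfloor\to 0$ as $n\to\infty$ for every fixed $j$, hence $a_{j,n}/n \to 0$ pointwise in $j$.

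Finally, the crude estimate $a_{j,n}/n \le 2^{-j}C_q$ from the first paragraph provides a summable majorant, so dominated convergence on $\NN$ in the variable $j$ yields $\sum_{j\ge 1} a_{j,n}/n \to 0$. Combined with the i.i.d.\ sum and the boundary term, this establishes $\lim_n \EE(\|\overline{S}_n\|^q)/n = 0$. The main conceptual step, and the only one requiring care, is the scale-by-scale i.i.d.\ observation: it must be justified by noting that Definition \ref{def:mix} applies to the \emph{entire} sub-$\sigma$-algebra generated by $(S_{m,n})_{2^jk \le m < n \le 2^j(k+1)}$, not merely to isolated increments, so that any measurable function of one block is independent of the past blocks and identically distributed across blocks. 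Once this is in place, the two regimes $q<1$ and $q\ge 1$ in Lemma \ref{lem:ldev-poly} (with $b=0$ in the present case) deliver the rest of the argument essentially for free.
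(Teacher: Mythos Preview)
Your proposal is correct and follows essentially the same approach as the paper: decompose via Lemma \ref{lem:dichotomy}, use \eqref{qle1} termwise to get \eqref{dom-q}, then apply Lemma \ref{lem:ldev-poly} to the i.i.d.\ sum $\sum_k S_k$ and to each scale-$j$ block sum, and conclude by dominated convergence using the summable majorant $2^{-j}C_q$. Your explicit justification that the blocks $(X_k^{(j)})_k$ are i.i.d.\ via Definition \ref{def:mix} is a point the paper leaves implicit in this proof (stating it only in the proof of Theorem \ref{th:almost-wlln}), so your write-up is if anything slightly more complete.
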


\begin{proof}
	By Lemma \ref{lem:dichotomy}, we have for all $n \ge 1$:
	\begin{equation*}
		\overline{S}_n  = \sum_{k = 0}^{n-1}S_k + \sum_{j = 1}^{\lfloor\log_2(n)\rfloor} \Delta S(0, \lfloor n \rfloor_{2^j}, \lfloor n \rfloor_{2^{j-1}}) + \sum_{k = 0}^{\lfloor n/2^j\rfloor -1} \Delta S(2^j k, 2^j(k+1/2), 2^j(k+1)).
	\end{equation*}
	By \eqref{qle1}, we have:
	\begin{multline*}
		\EE(\|\overline{S}_{n}\|^q) \le \sum_{k = 0}^{n-1}\EE(\|S_k\|^q) + \sum_{j = 1}^{\lfloor\log_2(n)\rfloor} \EE(\|\Delta S(0, \lfloor n \rfloor_{2^j}, \lfloor n \rfloor_{2^{j-1}})\|)^q \\ + \sum_{k = 0}^{\lfloor n/2^j\rfloor -1} \EE(\|\Delta S(2^j k, 2^j(k+1/2), 2^j(k+1))\|^q).
	\end{multline*}
	Moreover, we have $\EE(\|\Delta(l,m,n)\|^q) \le C_q$ for all $l \le m \le n$ so we have:
	\begin{equation*}
		\EE(\|\overline{S}_{n}\|^q) \le \sum_{k = 0}^{n-1}\EE(\|S_k\|^q) + \sum_{j = 1}^{\lfloor\log_2(n)\rfloor} C_q + \lfloor n/2^j\rfloor C_q,
	\end{equation*}
	which proves \eqref{dom-q}.
	By Lemma \ref{lem:dichotomy} and \eqref{qle1} again, we have:
	\begin{multline*}
		\EE(\|\overline{S}_{n}\|^q) \le \EE\left(\left\|\sum_{k = 0}^{n-1}S_k\right\|^q\right) + \sum_{j = 1}^{\lfloor\log_2(n)\rfloor} \EE\left(\left\|\sum_{k = 0}^{\lfloor n/2^j\rfloor -1} \Delta S(2^j k, 2^j(k+1/2), 2^j(k+1))\right\|^q\right) \\+ \lfloor \log_2(n)\rfloor C_q.
	\end{multline*}
	For all $j \ge 0$, we have:
	\begin{equation*}
		K^q_j : =\EE\left(\left\|\sum_{k = 0}^{\lfloor n/2^j\rfloor -1} \Delta S(2^j k, 2^j(k+1/2), 2^j(k+1))\right\|^q\right) \le \lfloor n/2^j\rfloor C_q \le n 2^{-j}C_q
	\end{equation*}
	and $K^q_j / n \to 0$ by Lemma \ref{lem:ldev-poly}.
	Therefore, by dominated convergence, we have:
	\begin{equation*}
		\forall n \ge 0,\; \sum_{j  = 1}^\infty K^q_j / n \le \sum_{j  = 1}^\infty 2^{-j}C_q = C_q \quad \text{and} \quad \lim_{n\to \infty}\sum_{j  = 1}^\infty K^q_j / n = 0.
	\end{equation*}
	By Lemma \ref{lem:ldev-poly} again, we have $\EE\left(\left\|\sum_{k = 0}^{n-1}S_k\right\|^q\right)/ n \to 0$. Therefore:
	\begin{equation*}
		\EE(\|\overline{S}_{n}\|^q)/n \le \EE\left(\left\|\sum_{k = 0}^{n-1}S_k\right\|^q\right)/n + \sum_{j = 1}^{\lfloor\log_2(n)\rfloor} K^q_j / n + \lfloor \log_2(n)\rfloor C_q/n \to 0. \qedhere
	\end{equation*}
\end{proof}

\begin{Th}[Weak law of large numbers with drift for almost additive processes]\label{th:almost-wlln}
	Let $V$ be a real Hilbert space.
	Let $1 \le q < 2$ and $C_q \ge 0$.
	Let $(\mathrm{S}_{m,n})_{0 \le m \le n}$ be a $V$-valued mixing process that is $C_q$-almost-additive in $\mathrm{L}^q$.
	Assume also that $\EE(\|S_0\|^q) < + \infty$.
	Let
	\begin{equation}\label{def-b}
		b := \lim_n \frac{\EE(S_{0,n})}{n}.
	\end{equation}
	Then, $\|b\| \le \EE(S_0)+ C_q^{1/q}$ and we have:
	\begin{equation*}
		\lim_n \mathrm{Var}_q (\overline{S}_{n}) / n = 0.
	\end{equation*}
	Moreover, for $q \ge 1$, and for all $n \ge 1$, we have:
	\begin{equation}\label{dom-var-q}
		\mathrm{Var}_q (\overline{S}_{n})^{1/q} \le n^{1/q} \left(\mathrm{Var}_q(S_0)^{1/q} + V_q^{1/q} \sum_{j \ge 1} 2^{-j/q}\right) + \lfloor\log_2(n)\rfloor V_q^{1/q}.
	\end{equation}
	with $V_q = \max_{k \le l \le m} \mathrm{Var}_q(\Delta S(k,l,m)) \le 2C_q$.
\end{Th}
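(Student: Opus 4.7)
The plan is to mimic the proof of Theorem~\ref{almost-wlln-no-drift}, replacing the scalar inequality $\|x+y\|^q \le \|x\|^q+\|y\|^q$ (valid only for $q\le 1$) by Minkowski's inequality applied to $\mathrm{Var}_q^{1/q}$ (valid for $q\ge 1$) and exploiting the subadditivity of variance for independent sums given by Lemma~\ref{lem:var-q}. Since $\mathrm{Var}_q$ is by construction centered at the mean, this sidesteps the need to introduce a drift $b$ inside the triangle inequality: the centering happens automatically piece by piece.

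The first step is to establish the existence of $b$ and the bound on $\|b\|$. By time-invariance and $C_q$-almost-additivity, $\EE(S_{0,2n})=2\EE(S_{0,n})+\EE(\Delta S(0,n,2n))$ with $\|\EE(\Delta S(0,n,2n))\|\le \EE(\|\Delta S(0,n,2n)\|^q)^{1/q}\le C_q^{1/q}$ by Jensen's inequality, so $\EE(S_{0,n})/n$ is Cauchy and its limit $b$ exists. To bound $\|b-\EE(S_0)\|$ I would apply Lemma~\ref{lem:dichotomy} to $\Delta S_{0,n}=\overline{S}_n-\sum_{k<n}S_k$: after taking expectations and using time-invariance to collapse the $k$-indexed terms into $\lfloor n/2^j\rfloor\EE(\Delta S(0,2^{j-1},2^j))$, I would obtain $\|\EE(\Delta S_{0,n})\|/n\le \lfloor\log_2 n\rfloor C_q^{1/q}/n+C_q^{1/q}\sum_{j\ge 1}2^{-j}$, which tends to $C_q^{1/q}$, giving $\|b\|\le\|\EE(S_0)\|+C_q^{1/q}$.

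Next, I decompose $\overline{S}_n$ via Lemma~\ref{lem:dichotomy} into three pieces: the i.i.d.\ sum $A_n=\sum_{k<n}S_k$; the boundary sum $B_n=\sum_{j=1}^{\lfloor\log_2 n\rfloor}\Delta S(0,\lfloor n\rfloor_{2^j},\lfloor n\rfloor_{2^{j-1}})$ of $\lfloor\log_2 n\rfloor$ terms, each with $\mathrm{Var}_q$ at most $V_q$; and for each $1\le j\le \lfloor\log_2 n\rfloor$ the scale-$j$ sum $C_{n,j}=\sum_{k<\lfloor n/2^j\rfloor}\Delta S(2^jk,2^j(k+1/2),2^j(k+1))$, which by the mixing hypothesis is an i.i.d.\ sum of $\lfloor n/2^j\rfloor$ terms each with $\mathrm{Var}_q$ at most $V_q$. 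Applying Minkowski's inequality to $\mathrm{Var}_q^{1/q}$, then using Lemma~\ref{lem:var-q} inside the independent sums $A_n$ and $C_{n,j}$, I get $\mathrm{Var}_q(\overline{S}_n)^{1/q}\le (n\mathrm{Var}_q(S_0))^{1/q}+\lfloor\log_2 n\rfloor V_q^{1/q}+\sum_{j=1}^{\lfloor\log_2 n\rfloor}(\lfloor n/2^j\rfloor V_q)^{1/q}$, which is exactly the quantitative estimate~\eqref{dom-var-q}.

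For the limit $\mathrm{Var}_q(\overline{S}_n)/n\to 0$, I apply Lemma~\ref{lem:ldev-poly} to each piece separately: $\mathrm{Var}_q(A_n)/n\to 0$ since $\EE(\|S_0\|^q)<\infty$, and for each fixed $j$, $\mathrm{Var}_q(C_{n,j})/\lfloor n/2^j\rfloor\to 0$ as $n\to\infty$, hence $\mathrm{Var}_q(C_{n,j})^{1/q}/n^{1/q}\to 0$; finally $\mathrm{Var}_q(B_n)^{1/q}/n^{1/q}\le\lfloor\log_2 n\rfloor V_q^{1/q}/n^{1/q}\to 0$. The hard part is the sum over $j$: its length $\lfloor\log_2 n\rfloor$ grows with $n$, so one cannot simply pass to the limit term-by-term. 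The way out is dominated convergence: each summand $\mathrm{Var}_q(C_{n,j})^{1/q}/n^{1/q}$ is bounded uniformly in $n$ by $(V_q 2^{-j})^{1/q}$, a summable dominator precisely because $q\ge 1$ makes $\sum_{j\ge 1} 2^{-j/q}$ finite, and its pointwise limit in $n$ is $0$ for each $j$, so the sum vanishes.
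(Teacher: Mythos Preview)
Your proposal is correct and follows essentially the same route as the paper: decompose $\overline{S}_n$ via Lemma~\ref{lem:dichotomy}, apply Minkowski's inequality to $\mathrm{Var}_q^{1/q}$ together with the independence bound of Lemma~\ref{lem:var-q} on the i.i.d.\ pieces to obtain~\eqref{dom-var-q}, and conclude the limit via Lemma~\ref{lem:ldev-poly} and dominated convergence over the geometrically dominated sum in $j$. The only cosmetic difference is that the paper defines $b$ explicitly as $\EE(S_0)+\sum_{j\ge 1}2^{-j}\EE(\Delta S(0,2^{j-1},2^j))$ and reads off the bound $\|\EE(\overline{S}_n)-nb\|\le 2C_q^{1/q}\lfloor\log_2(n)+1\rfloor$ directly, whereas your doubling argument for ``$\EE(S_{0,n})/n$ is Cauchy'' is a bit terse (it literally only treats dyadic $n$) but is immediately completed by the dichotomy computation you give right after.
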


\begin{proof}
	By Lemma \ref{lem:dichotomy}, we have for all $n \ge 1$:
	\begin{equation*}
		\overline{S}_n  = \sum_{k = 0}^{n-1}S_k + \sum_{j = 1}^{\lfloor\log_2(n)\rfloor} \Delta S(0, \lfloor n \rfloor_{2^j}, \lfloor n \rfloor_{2^{j-1}}) + \sum_{k = 0}^{\lfloor n/2^j\rfloor -1} \Delta S(2^j k, 2^j(k+1/2), 2^j(k+1)).
	\end{equation*}
	If we take the expectation, using the fact that $S$ is time-invariant, we get:
	\begin{equation*}
		\EE(\overline{S}_n) = n \EE(S_0) + \sum_{j = 1}^{\lfloor\log_2(n)\rfloor} \EE(\Delta S(0, \lfloor n \rfloor_{2^j}, \lfloor n \rfloor_{2^{j-1}})) + \lfloor n/2^j\rfloor \EE(\Delta S(0, 2^{j-1}, 2^j)).
	\end{equation*}
	Let $b := \EE(S_0) + \sum_{j \ge 1}2^{-j}\EE(\Delta S(0, 2^{j-1}, 2^j))$. 
	We remind that we have $\|\EE(\Delta S(l,m,n))\|\le C_q^{1/q}$ for all $l \le m \le n$ and that $n/2^j - \lfloor n/2^j\rfloor \le 1$ for all $j \le \log_2(n)$ and $\sum_{j > \log_2(n)} 2^{-j} n \le 2$ for all $n$. 
	Then, by triangular inequality, we have $\|b\| \le \EE(S_0)+ C_q^{1/q}$ and for all $n \ge 1$, we have:
	\begin{equation*}
		\|\EE(\overline{S}_n) - nb \| \le 2C_q^{1/q} \lfloor\log_2(n) + 1\rfloor,
	\end{equation*}
	which proves \eqref{def-b}.
	
	By the mixing assumption, the sequence $(S_k)_k$ is i.i.d. and therefore, we have:
	\begin{equation*}
		\forall n\ge 0, \; \mathrm{Var}_q\left(\sum_{k = 0}^{n-1}S_k\right) \le n \mathrm{Var}_q(S_0) \quad \text{and} \quad \lim_n \mathrm{Var}_q\left(\sum_{k = 0}^{n-1}S_k\right)/ n = 0.
	\end{equation*}
	Moreover, for all $j$, and for all $k$ the quantity $\Delta S(2^j k, 2^j(k+1/2), 2^j(k+1))$ is given by a function of $(S_{l,m})_{2^j k \le l \le m \le 2^{j}(k+1)}$ so the sequence $(\Delta S(2^j k, 2^j(k+1/2), 2^j(k+1)))_{k \ge 0}$ is i.i.d. for all $j \ge 0$.
	Therefore, we have:
	\begin{equation*}
		\mathrm{Var}_q\left(\sum_{k = 0}^{\lfloor n/2^j\rfloor -1} \Delta S(2^j k, 2^j(k+1/2), 2^j(k+1))\right) \le \lfloor n/2^j\rfloor \mathrm{Var}_q(\Delta S(0, 2^{j-1}, 2^j)),
	\end{equation*}
	for all $n \ge 0$ and:
	\begin{equation*}
		\lim_n \mathrm{Var}_q\left(\sum_{k = 0}^{\lfloor n/2^j\rfloor -1} \Delta S(2^j k, 2^j(k+1/2), 2^j(k+1))\right) / n = 0.
	\end{equation*}
	Moreover, by Minkowski's inequality, we have:
	\begin{multline}\label{minko}
		\mathrm{Var}_q (\overline{S}_{n})^{1/q} \le \mathrm{Var}_q\left(\sum_{k = 0}^{n-1}S_k\right)^{1/q} + \sum_{j = 1}^{\lfloor \log_2(n)\rfloor} \mathrm{Var}_q(\Delta S(0, \lfloor n \rfloor_{2^j}, \lfloor n \rfloor_{2^{j-1}}))^{1/q} \\ 
		+ \sum_{j = 1}^{+ \infty}\mathrm{Var}_q\left(\sum_{k = 0}^{\lfloor n/2^j\rfloor -1} \Delta S(2^j k, 2^j(k+1/2), 2^j(k+1))\right)^{1/q}.
	\end{multline}
	Therefore, we have
	\begin{multline*}
		\mathrm{Var}_q (\overline{S}_{n})^{1/q} \le n^{1/q} \mathrm{Var}_q(S_0)^{1/q} + \sum_{j = 1}^{\lfloor \log_2(n)\rfloor} \mathrm{Var}_q(\Delta S(0, \lfloor n \rfloor_{2^j}, \lfloor n \rfloor_{2^{j-1}}))^{1/q} \\ 
		+ \sum_{j = 1}^{+ \infty} \lfloor n/2^j\rfloor^{1/q} \mathrm{Var}_q(\Delta S(0, 2^{j-1}, 2^j))^{1/q},
	\end{multline*}
	which proves \eqref{dom-var-q}.
	Moreover, by \eqref{minko} and by dominated convergence, we have:
	\begin{equation*}
		\mathrm{Var}_q (\overline{S}_{n})^{1/q}/n^{1/q}  \to 0. \qedhere
	\end{equation*}
\end{proof}

\begin{proof}[Proof of Theorem \ref{th:wlln}]
	Let $\nu$ be strongly irreducible and proximal and let $(\gamma_n)\sim \nu^{\otimes\NN}$.
	Let $0 < q <2$ and assume that $\EE(N(\gamma_0)^{q / 2}) < \infty$. 
	By Lemma \ref{lem:ellq-almost}, the process $(\Delta\kappa(\widetilde{\gamma}_{m,n}))_{0 \le m \le n}$ is $\mathrm{L}^q$-almost-additive.
	Therefore,, by Theorem \ref{th:almost-wlln}, for $b = - \delta(\nu)$ when $q \ge 1$ and $b = 0$ when $q < 1$, we have:
	\begin{equation*}
		\lim_{n \to  \infty}\EE\left(\left|\Delta\kappa(\widetilde\gamma_{0,n}) +  nb\right|^q\right) / n = 0. \qedhere
	\end{equation*}
\end{proof}

\begin{proof}[Proof of Theorem \ref{th:wlln-high}]
	Let $\nu$ be strongly irreducible and proximal and let $(\gamma_n)\sim \nu^{\otimes\NN}$.
	Let $0 < q <2$ and assume that $\EE(N(\gamma_0)^{q / 2}) < \infty$. 
	By Lemma \ref{lem:ellq-higher}, the process $(\Delta\dot\kappa(\widetilde{\gamma}_{m,n}))_{0 \le m \le n}$ is $\mathrm{L}^q$-almost-additive.
	Therefore, by Theorem \ref{th:almost-wlln}, when $q > 1$, the limit:
	\begin{equation*}
		b = \lim_n \frac{- \EE(\Delta\dot\kappa(\widetilde{\gamma}_{0,n}))}{n}
	\end{equation*}
	is well defined and taking $b = 0$ when $q < 1$, we have:
	\begin{equation*}
		\lim_{n \to  \infty}\EE\left(\left\|\Delta\dot\kappa(\widetilde\gamma_{0,n}) +  nb\right\|^q\right) / n = 0. \qedhere
	\end{equation*}		
\end{proof}


	\subsection{Central limit Theorem}
	
	In this section, $V$ denotes a Euclidean vector space (\ie $V \simeq \RR^d$ endowed with the usual scalar product) on which we will consider additive mixing processes.
	We do not denote this Euclidean space by $E$ or $\RR^d$ to avoid confusion with the space on which random matrices act.
	In practice, we want to see $V$ as the target space of the map $(\kappa_i)_{i \in \Theta}$ defined on $\mathrm{End}(E)$ for a given Euclidean, Hermitian or ultra-metric space $E$ and a subset $\Theta \subset \{1, \dots,\dim(E)\}$. 
	Contrary to $E$ that may be Hermitian or ultra-metric, $V$ is always a real Hilbert space.
	
	Let $x$ a random variable that takes values in $V$.
	Assume that $x$ is $\mathrm{L}^2$, \ie $\EE(\|x\|^2) < + \infty$.
	We write $\mathrm{Cov}(x)$ for the covariance matrix of $x$.
	Given an orthonormal basis $(e_i)_i$ of $V$, and $x = \sum x_i e_i$, the covariance matrix of $x$ is given by the formula $\mathrm{Cov}(x)_{i,j} = \mathrm{Cov}(x_i, x_j) = \EE((x_i - \EE(x_i))(x_j -\EE(x_j)))$.
	 
	That way $\mathrm{Cov}(x)$ is a symmetric matrix in the sense that $\mathrm{Cov}(x)_{i,j} = \mathrm{Cov}(x)_{j,i}$ for all $i,j$ and it is positive in the sense that $\mathrm{Cov}(x)_{i,i} \ge 0$ for all $i$.
	This property in invariant under a change of orthonormal basis. 
	We denote by $\mathrm{Sym}^2_+(V)$ the space of symmetric positive matrices.
	We remind that any element $g \in \mathrm{Sym}^2_+(V)$ can be written as a diagonal matrix $\mathrm{diag}(a_i)_i$ with non-negative coefficients in an orthonormal basis $(e_i)_i$, of $V$, its square root, denoted by $\sqrt{g}$, is associated to the matrix $\mathrm{diag}(\sqrt{a_i})_i$ in the same basis and is also in $\mathrm{Sym}^2_+(V)$.
	One cane easily check that in any basis, we have $g_{i,j} = \sum_k \sqrt{g}_{i,k}\sqrt{g}_{k,j}$.
	
	In the present section, we endow the space of matrices in $V$ with the norm $\|g\|^2 = \sum_{i,j} |g_{i,j}|^2$.
	This norm is sub-multiplicative and does not depend on the choice of pair of orthonormal bases, however it is not equal to the operator norm.
	
	It is clear that given two independent random variables $x,y$, we have $\mathrm{Cov}(x+y) = \mathrm{Cov}(x) + \mathrm{Cov}(y)$.
	Without assuming the independence of $x$ and $y$, we have:
	\begin{equation}\label{cov-var}
		\left\|\sqrt{\mathrm{Cov}(x+y)} - \sqrt{\mathrm{Cov}(x)}\right\|^2 \le \mathrm{Var}(y).
	\end{equation}
	In other words, the map $x \mapsto \sqrt{\mathrm{Cov}(x)}$ is contracting in $\mathrm{L}^2$.
	
	We remind that every $\mathrm{L}^2$ random variable $x$ shares its covariance matrix with a unique centred Gaussian distribution denoted by $\mathcal{N}_{\mathrm{Cov}(x)}$.
	Moreover, we have $N_{\mathrm{Cov}(x)} = \sqrt{\mathrm{Cov}(x)}_* \mathcal{N}^{\otimes I}$, where $\mathcal{N}$ denotes the centred Gaussian distribution ov variance $1$ on $\RR$ and $\sqrt{\mathrm{Cov}(x)}* \mathcal{N}^{\otimes I}$ the law of $\sum_{i,j} e_i\sqrt{\mathrm{Cov}(x)}_{i,j}x_j$ with $(x_i)_{i \in I} \sim \mathcal{N}^{\otimes I}$.
	
	Given two probability distributions $\mu$ and $\eta$, we write $\mathcal{W}_2(\mu, \eta)$ for the minimum of $\sqrt{\EE(\|x-y\|^2)}$ for all couplings $x \sim \nu$ and $y \sim \eta$.
	That way, for all $a, a' \in \mathrm{Sym}_+^2(V)$, we have $\mathcal{W}_2(\mathcal{N}_a, \mathcal{N}_{a'}) = \|\sqrt{a} - \sqrt{a'}\|$.
	Moreover for all random $x \in V$, the law $\mathcal{N}_{\mathrm{Cov}(x)}$ is the orthogonal projection of the law of $x$ onto the space of Gaussian measures, which yields \eqref{cov-var}.
	 
	The Central Limit Theorem tell us that given an i.i.d. sequence $(x_n)_{n \ge 0}$ such that $\mathrm{Var}(x_0) < + \infty$, the distribution of $\frac{\overline{x}_n - n\EE(x_0)}{\sqrt{n}}$ converges in distribution and in the quadratic Wasserstein topology, to the centred Gaussian distribution of covariance matrix $\mathrm{Cov}(x_0)$.
	In other words, there exists, on the same probability space: a random i.i.d. sequence $(x_n) \sim \mu^{\otimes\NN}$ and a sequence $(y_n)$ such that $y_n \sim \mathcal{N}_a$ for all $n$ and:
	\begin{equation}\label{eq:clt}
		\lim_n \EE\left(\left\| \frac{\overline{x}_n - nb}{\sqrt{n}} - y_n\right\|^2\right) \to 0.
	\end{equation}

	\begin{Th}[Central Limit Theorem for almost additive processes]\label{th:almost-clt}
		Let $C_2 \ge 0$ and let $(S_{m,n})_{0 \le m \le n}$ be a mixing process in a Hilbert space $V$ that is $C_2$-almost additive in $\mathrm{L}^2$. 
		Assume that $\EE(\|S_{0}\|^2) < + \infty$.
		Then, for all $n \ge 1$, we have:
		\begin{equation}\label{dom-var}
			\sqrt{\mathrm{Var}(\overline{S}_n)} \le \sqrt{n \mathrm{Var}(S_{0})} + \sqrt{C_2} \left(\log_2(n) + \frac{\sqrt{n}}{\sqrt{2} - 1}\right)
		\end{equation} 
		and the quantities:
		\begin{gather*}
			b =\lim_n \frac{\EE(\overline{S}_{n})}{n}, \\ 
			a = \lim_n \frac{\mathrm{Cov}(\overline{S}_{n})}{n} 
		\end{gather*}
		are both well defined in $V$ and $\mathrm{Sym}^2_+(V)$ respectively.
		Moreover there exists a coupling of $(S_{m,n})_{0 \le m _le n}$ with sequence $(y_n)$ such that $y_n \sim \mathcal{N}_a$ for all $n$ and:
		\begin{equation*}
			\lim_n \EE\left(\left\| \frac{\overline{x}_n - nb}{\sqrt{n}} - y_n\right\|^q\right) \to 0.
		\end{equation*}
	\end{Th}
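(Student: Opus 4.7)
The plan is to combine the dichotomy formula of Lemma~\ref{lem:dichotomy} with the classical Wasserstein central limit theorem for i.i.d.\ random variables in a Hilbert space, in the spirit of the proof of Theorem~\ref{th:almost-wlln}. First I would establish the variance bound \eqref{dom-var} by taking $\sqrt{\mathrm{Var}(\cdot)}$ of the expression for $\overline{S}_n$ supplied by Lemma~\ref{lem:dichotomy} and applying Minkowski's inequality: by the mixing hypothesis, for each fixed dyadic level $j$ the summands $\Delta S(2^j k, 2^j(k+1/2), 2^j(k+1))$ indexed by $k$ depend on disjoint time intervals, hence are i.i.d., so their variances add and the standard deviation of the level-$j$ sum is at most $\sqrt{\lfloor n/2^j\rfloor\, C_2}$; summing the geometric series over $j$ together with the $\lfloor\log_2 n\rfloor$ boundary terms (each of variance at most $C_2$) yields the bound. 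The same dichotomy applied to the expectation gives $\EE(\overline{S}_n) = n\EE(S_0) + \sum_j \lfloor n/2^j\rfloor\,\EE(\Delta S(0,2^{j-1},2^j)) + O(\sqrt{C_2}\log n)$, and since $\|\EE(\Delta S(l,m,n))\| \le \sqrt{C_2}$, the series $b := \EE(S_0) + \sum_{j \ge 1} 2^{-j}\EE(\Delta S(0,2^{j-1},2^j))$ converges absolutely in $V$; the estimate $\|\EE(\overline{S}_n) - nb\| = O(\log n)$ follows by standard bookkeeping.

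To produce $a \in \mathrm{Sym}^2_+(V)$, I would use the self-similar identity $\overline{S}_{2n} = \overline{S}_n^{(1)} + \overline{S}_n^{(2)} + \Delta S(0, n, 2n)$ in which $\overline{S}_n^{(1)}$ and $\overline{S}_n^{(2)}$ are independent copies of $\overline{S}_n$ by mixing; combined with the contractivity estimate~\eqref{cov-var}, this yields
\begin{equation*}
\left\|\sqrt{\mathrm{Cov}(\overline{S}_{2n})} - \sqrt{2\,\mathrm{Cov}(\overline{S}_n)}\right\|^2 \le \mathrm{Var}(\Delta S(0,n,2n)) \le C_2,
\end{equation*}
and dividing by $2n$ gives $\|\sqrt{\mathrm{Cov}(\overline{S}_{2n})/(2n)} - \sqrt{\mathrm{Cov}(\overline{S}_n)/n}\|^2 \le C_2/(2n)$. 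Iterating along the dyadic scale produces a Cauchy sequence $\sqrt{\mathrm{Cov}(\overline{S}_{2^N})/2^N}$ in the Hilbert--Schmidt norm on $\mathrm{Sym}^2_+(V)$, whose limit is a symmetric positive matrix $\sqrt{a}$ and thereby defines $a$. For arbitrary $n$, a further application of~\eqref{cov-var} to the decomposition $\overline{S}_n = \overline{S}_{2^N}^{(1)} + \overline{S}_{n-2^N}^{(2)} + \Delta S(0, 2^N, n)$ for $n \in [2^N, 2^{N+1})$, combined with the variance bound~\eqref{dom-var}, extends the convergence to all $n$.

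The central limit theorem itself is obtained by block approximation. For each $J$ set $m := \lfloor n/2^J\rfloor$ and decompose
\begin{equation*}
\overline{S}_n = \sum_{k=0}^{m-1} S_{k\cdot 2^J,\,(k+1)\cdot 2^J} + E_n^{(J)},
\end{equation*}
in which the first sum consists, by mixing, of $m$ i.i.d.\ copies of $\overline{S}_{2^J}$, while the residual $E_n^{(J)}$ is controlled via the bound~\eqref{dom-var} applied to the coarsened mixing $C_2$-almost additive process $\tilde{S}^{(J)}_{i,j} := S_{i\cdot 2^J,\,j\cdot 2^J}$, giving $\mathrm{Var}(E_n^{(J)}) = O(C_2\, n/2^J)$ uniformly in $n$. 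The classical Wasserstein CLT for i.i.d.\ $\mathrm{L}^2$ Hilbert-valued variables applied to the first sum, combined with the continuity $\mathcal{W}_2(\mathcal{N}_{a'},\mathcal{N}_{a''}) = \|\sqrt{a'} - \sqrt{a''}\|$, yields
\begin{equation*}
\mathcal{W}_2\!\left(\frac{1}{\sqrt{n}}\sum_{k=0}^{m-1}\bigl(S_{k\cdot 2^J,\,(k+1)\cdot 2^J} - \EE(\overline{S}_{2^J})\bigr),\; \mathcal{N}_{\mathrm{Cov}(\overline{S}_{2^J})/2^J}\right) \underset{n \to \infty}{\longrightarrow} 0.
\end{equation*}
Re-centering by the exact mean $\EE(\overline{S}_n)$ (which absorbs $\EE(E_n^{(J)})$) and using $(\EE(\overline{S}_n) - nb)/\sqrt{n} \to 0$ from step~1, the triangle inequality for $\mathcal{W}_2$ gives
\begin{equation*}
\limsup_{n\to\infty}\mathcal{W}_2\!\left(\frac{\overline{S}_n - nb}{\sqrt{n}},\,\mathcal{N}_a\right) \le \left\|\sqrt{\mathrm{Cov}(\overline{S}_{2^J})/2^J} - \sqrt{a}\right\| + O\!\left(\sqrt{C_2/2^J}\right).
\end{equation*}
Letting $J \to \infty$ and invoking step~2 makes the right-hand side vanish, so $\mathcal{W}_2((\overline{S}_n - nb)/\sqrt{n},\,\mathcal{N}_a) \to 0$, which by optimal coupling produces the sequence $(y_n)$ claimed in the statement.

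The main obstacle is the convergence $\mathrm{Cov}(\overline{S}_n)/n \to a$: in the dichotomy formula the contributions from different dyadic scales are not mutually independent, so one cannot simply add up their covariances. The inequality~\eqref{cov-var}, which controls the change in $\sqrt{\mathrm{Cov}}$ purely in terms of the variance of the perturbation, is precisely what turns the self-similar identity $\overline{S}_{2n} = \overline{S}_n^{(1)} + \overline{S}_n^{(2)} + \Delta S(0,n,2n)$ into a summable Cauchy estimate and pins down the correct limit matrix $a$ without any independence assumption between the various scales.
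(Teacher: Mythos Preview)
Your approach is essentially the same as the paper's: both use the dichotomy formula of Lemma~\ref{lem:dichotomy} with Minkowski and level-wise independence for the variance bound~\eqref{dom-var}; both obtain $a$ from the self-similar identity $\overline{S}_{2n}=\overline{S}_n^{(1)}+\overline{S}_n^{(2)}+\Delta S(0,n,2n)$ combined with the contractivity~\eqref{cov-var} to get a Cauchy sequence along dyadics; and both prove the CLT by block approximation at scale $2^J$, applying the classical $\mathcal{W}_2$-CLT to the i.i.d.\ blocks and letting $J\to\infty$ via the triangle inequality for $\mathcal{W}_2$. Two small points where your sketch is looser than the paper: your claim $\mathrm{Var}(E_n^{(J)})=O(C_2\,n/2^J)$ uniformly in $n$ omits the boundary piece $S_{m\cdot 2^J,n}+\Delta S(0,m\cdot 2^J,n)$, whose variance is $O(2^J)$ rather than $O(n/2^J)$ (harmless, since after dividing by $n$ it vanishes as $n\to\infty$ for fixed $J$, and the paper handles this by working with $\overline{S}_{\lfloor n\rfloor_{2^i}}$ directly); and your extension of $\mathrm{Cov}(\overline S_n)/n\to a$ to non-dyadic $n$ via a single split at $2^N$ does not quite close---the paper instead uses the full binary expansion $\overline S_n=\sum_i S_{\lfloor n\rfloor_{2^{i+1}},\lfloor n\rfloor_{2^i}}+\Delta S(0,\lfloor n\rfloor_{2^{i+1}},\lfloor n\rfloor_{2^i})$, which is what your single-split argument becomes once iterated.
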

	
	\begin{proof}
		We know that $b$ is well defined by Theorem \ref{th:almost-wlln}.
		First we prove that $a$ is well defined.
		We write $C_2 = C_q^{2/q}$, that way $(S_{m,n})_{0 \le m \le n}$ is $C_2$-almost additive in $\mathrm{L}^2$
		By \eqref{cov-var}, we have for all $0 \le m \le n$:
		\begin{equation}
			\left\|\sqrt{\mathrm{Cov}({S}_{0,n})} - \sqrt{\mathrm{Cov}({S}_{0,m} + S_{m,n})}\right\| \le  \sqrt{C_2}
		\end{equation}
		Moreover, by independence, we have:
		\begin{equation}
			\mathrm{Cov}({S}_{0,m} + S_{m,n}) = \mathrm{Cov}({S}_{0,m}) + \mathrm{Cov}(S_{m,n}).
		\end{equation}
		Therefore, for all $i \ge 1$, we have:
		\begin{equation*}
			\left\|\sqrt{2 \mathrm{Cov}({S}_{0,2^{i}})} - \sqrt{\mathrm{Cov}({S}_{0,2^{i+1}})}\right\| \le \sqrt{C_2}
		\end{equation*}
		Therefore, the limit $a := \lim 2^{-i}\mathrm{Cov}({S}_{0,2^{i}})$ is well defined and for all $n \in \NN$, we have:
		\begin{equation}\label{qsdkg}
			\sqrt{\mathrm{Var}({S}_{0,2^{i}})} \le 2^{i/2} \sqrt{\mathrm{Var}(S_0)} + \frac{2^{i / 2} - 1}{\sqrt{2}-1} \sqrt{C_2}
		\end{equation}
		and:
		\begin{equation}\label{sqdbkgh}
			\left\|\sqrt{\mathrm{Cov}({S}_{0,2^{i}})/2^i} - \sqrt{a}\right\| \le \frac{2^{- i / 2}}{\sqrt{2}-1} \sqrt{C_2}.
		\end{equation}
		
		Let $n$ be an arbitrary integer \ie not necessarily a power of $2$. Then we have :
		\begin{equation*}
			\overline{S}_{n} = \sum_{i = 0}^{\lfloor\log_{2}(n)\rfloor} {S}_{\lfloor n\rfloor_{2^{i + 1}}, \lfloor n\rfloor_{2^{i}}} + \Delta S(0, \lfloor n\rfloor_{2^{i + 1}}, \lfloor n\rfloor_{2^{i}}).
		\end{equation*}
		By independence and time-invariance, we have:
		\begin{equation*}
			\mathrm{Cov}\left(\sum_{i = 0}^{\lfloor\log_{2}(n)\rfloor} {S}_{\lfloor n\rfloor_{2^{i + 1}}, \lfloor n\rfloor_{2^{i}}}\right) = \sum_{i = 0}^{\lfloor\log_{2}(n)\rfloor} 2^{-i}\mathrm{Cov}({S}_{0,2^{i}})(\lfloor n\rfloor_{2^{i}} - \lfloor n\rfloor_{2^{i+1}}).
		\end{equation*}
		Therefore, for all $n$, we have:
		\begin{equation*}
			\left\|\mathrm{Cov}\left(\sum_{i = 0}^{\lfloor\log_{2}(n)\rfloor} {S}_{\lfloor n\rfloor_{2^{i + 1}}, \lfloor n\rfloor_{2^{i}}}\right) - n a\right\| \le \sum_{i = 0}^{\lfloor\log_{2}(n)\rfloor} (\lfloor n\rfloor_{2^{i}} - \lfloor n\rfloor_{2^{i+1}})\left\|{\mathrm{Cov}({S}_{0,2^{i}})/2^i} - {a}\right\|
		\end{equation*}
		By \eqref{sqdbkgh} and by sub-multiplicativity of the norm, there exists a constant $K$ such that $\|{\mathrm{Cov}({S}_{0,2^{i}})/2^i} - {a}\| \le 2^{-i/2} K$ for all $i$ and $(\lfloor n\rfloor_{2^{i}} - \lfloor n\rfloor_{2^{i+1}}) \le 2^i$ for all $n, i$. Therefore, we have:
		\begin{equation*}
			\left\|\mathrm{Cov}\left(\sum_{i = 0}^{\lfloor\log_{2}(n)\rfloor} {S}_{\lfloor n\rfloor_{2^{i + 1}}, \lfloor n\rfloor_{2^{i}}}\right) - n a\right\| \le \sum_{i = 0}^{\lfloor\log_{2}(n)\rfloor} K 2^{i/2} \le K \frac{\sqrt{2n} - 1}{\sqrt{2} - 1}.
		\end{equation*}
		Moreover, by \eqref{cov-var}, we have:
		\begin{equation*}
			\left\|\sqrt{\mathrm{Cov}\left(\sum_{i = 0}^{\lfloor\log_{2}(n)\rfloor} {S}_{\lfloor n\rfloor_{2^{i + 1}}, \lfloor n\rfloor_{2^{i}}}\right)} - \sqrt{\mathrm{Cov}(\overline{S}_{n})}\right\| \le \sqrt{C_2} \lfloor\log_{2}(n)\rfloor.
		\end{equation*}
		Then by triangular inequality, we have $\mathrm{Cov}(\overline{S}_{n}) / n \to a$.4
		
		To bound the variance, we use Lemma \ref{lem:dichotomy} and the triangular inequality for the $\mathrm{L}^2$ norm to get:
		\begin{multline*}
			\sqrt{\mathrm{Var}(\overline{S}_n)} \le \sqrt{\mathrm{Var}\left(\sum_{k = 0}^{n-1} S_k\right)} + \sum_{j = 1}^{\lfloor\log_2(n)\rfloor} \sqrt{\mathrm{Var}(\Delta S(0,\lfloor n \rfloor_{2^j}, \lfloor n\rfloor_{2^{j-1}}))} \\
			+ \sum_{j = 1}^{\lfloor\log_2(n)\rfloor} \sqrt{\mathrm{Var}\left(\sum_{k = 0}^{\lfloor n / 2^j \rfloor - 1}\Delta S (2^j k, 2^j(k+1/2), 2^j(k+1))\right)}
		\end{multline*}		
		By $C_2$-almost additivity, we have $\mathrm{Var}(\Delta S(k,l,m)) \le C_2$ for all $k \le l \le m$ and by independence, we have:
		\begin{gather*}
			\mathrm{Var}\left(\sum_{k = 0}^{n-1} S_k\right) = n \mathrm{Var}(S_0)\quad\text{and}\\
			\mathrm{Var}\left(\sum_{k = 0}^{\lfloor n / 2^j \rfloor - 1}\Delta S (2^j k, 2^j(k+1/2), 2^j(k+1))\right) = \lfloor n / 2^j \rfloor \mathrm{Var}(\Delta S (0, 2^{j-1}, 2^j))\le \frac{nC_2}{2^j}.
		\end{gather*}
		To get \eqref{dom-var}, we use the fact that $\sum_{j = 1}^{+\infty} \sqrt{2^{-j}} = 1/(\sqrt{2} - 1)$.
		
		Let un now prove the central limit Theorem. Let $n$ and $i$ be arbitrary integers, by Lemma \ref{lem:dichotomy} applied to $S^i$, we have:
		\begin{equation*}
			\overline{S}_{\lfloor n\rfloor_{2^i}} = \sum_{k = 0}^{\lfloor n/2^i\rfloor-1} S^i_{k} + \sum_{j = i}^{\lfloor \log_2(n)\rfloor} \Delta S(0,\lfloor n \rfloor_{2^j}, \lfloor n\rfloor_{2^{j-1}}) + \sum_{k = 0}^{\lfloor n / 2^j \rfloor - 1}\Delta S (2^j k, 2^j(k+1/2), 2^j(k+1)).
		\end{equation*}
		Let $(y^i_n)_{n \ge 0}$ be a random sequence such that $y^i_n \sim \mathcal{N}_{\mathrm{Cov}(S^i_0)}$ for all $n$ and:
		\begin{equation*}
			\EE\left(\left\| \sum_{k = 0}^{n} (S^i_{k} - \EE(S^i_0)) /\sqrt{n} - y_n \right\|^2\right) \to 0.
		\end{equation*}
		For all $j\ge 1$, we write $b_j  = \EE(\Delta S (0, 2^{j-1}, 2^j))$. 
		By independence, we have for all $j,n$:
		\begin{equation*}
			\mathrm{Var}\left({\sum_{k = 0}^{\lfloor n / 2^j \rfloor - 1}\Delta S (2^j k, 2^j(k+1/2), 2^j(k+1))}\right) = \lfloor n / 2^j \rfloor \mathrm{Var}(\Delta S (0, 2^{j-1}, 2^j)) \le 2^{-j}n C_2.
		\end{equation*}
		Then by triangular inequality for the $\mathrm{L}^2$ norm, we have:
		\begin{equation*}
			\sqrt{\EE\left(\left\|\overline{S}_{\lfloor n\rfloor_{2^i}} - \sum_{k = 0}^{\lfloor n/2^i\rfloor-1} S^i_{k} - \sum_{j = i}^\infty \lfloor n / 2^j \rfloor b_j\right\|^2\right)} \le \sum_{j = i}^\infty \sqrt{2^{-j}n C_2} + \log_2(n) \sqrt{C_2}.
		\end{equation*}
		Moreover, we have $b = \sum_{j = i}^\infty 2^{-j} b_j + \EE(S^i_0) / 2^i$.
		Therefore, we have:
		\begin{equation*}
			\limsup_{n \to  \infty}\sqrt{\EE\left(\left\| (\overline{S}_n - nb) /\sqrt{n} - 2^{-i / 2}y_{\lfloor n / 2^i\rfloor} \right\|^2\right)} \le \frac{2^{-i/2}\sqrt{C_2}}{1 - 1/\sqrt{2}}.
		\end{equation*}
		For all $n$, write $\eta_n$ for the law of $(\overline{S}_n - nb) /\sqrt{n}$ we have:
		\begin{equation*}
			\limsup_{n \to  \infty}\mathcal{W}_2(\eta_n, \mathcal{N}_{\mathrm{Cov}(S^i_0)/ 2^i}) \le \frac{2^{-i/2}\sqrt{C_2}}{1 - 1/\sqrt{2}}.
		\end{equation*}
		and by \eqref{sqdbkgh}, we have $\mathcal{W}_2(\mathcal{N}_a, \mathcal{N}_{\mathrm{Cov}(S^i_0)/ 2^i}) \le  \frac{2^{- i / 2}}{\sqrt{2}-1} \sqrt{C_2}$ so by triangular inequality for the Wasserstein distance, we have:
		\begin{equation*}
			\limsup_{n \to  \infty}\mathcal{W}_2(\eta_n, \mathcal{N}_{a}) \le \frac{2^{-i/2}\sqrt{C_2}}{1 - 1/\sqrt{2}} + \frac{2^{- i / 2}}{\sqrt{2}-1} \sqrt{C_2}.
		\end{equation*}
		Since this holds for all $i$, we have $\lim_{n \to  \infty}\mathcal{W}_2(\eta_n, \mathcal{N}_{a}) = 0$, which concludes the proof.
	\end{proof}

	Let us now apply the Central limit Theorem for almost additive processes to products of random matrices.
	
	\begin{proof}[Proof of Theorem \ref{th:clt}]
		Let $\nu$ be strongly irreducible and proximal and let $(\gamma_n)\sim \nu^{\otimes\NN}$.
		Assume that $\EE(N(\gamma_0)) < \infty$. 
		For all $0 \le m \le n$, write $S_{m,n} = \kappa(\gamma_{m,n})$.
		The process $(S_{m,n})$ is $\mathrm{L}^2$-almost additive by Lemma \ref{lem:ellq-almost}.
		Moreover $S_0 = \kappa(\gamma_0)$ so $\EE(\|S_0\|^2) < + \infty$ by assumption.
		Then, by Theorem \ref{th:almost-clt}, up to taking a coupling, there exist a sequence $(y_n)$ of identically distributed centred Gaussian random variables, such that:
		\begin{equation*}
			\lim_{n \to  \infty}\EE\left(\left|\frac{\Delta\kappa(\widetilde\gamma_{0,n}) - n \lambda_1(\nu)}{\sqrt{n}} - y_n\right|^2\right)= 0. \qedhere
		\end{equation*}
	\end{proof}
	
	\begin{proof}[Proof of Theorem \ref{th:clt-Delta}]
		Let $\nu$ be strongly irreducible and proximal and let $(\gamma_n)\sim \nu^{\otimes\NN}$.
		Assume that $\EE(N(\gamma_0)) < \infty$.
		For all $0 \le m \le n$, write $S_{m,n} = \Delta\kappa(\widetilde\gamma_{m,n})$.
		The process $(S_{m,n})$ is $\mathrm{L}^2$-almost additive by Lemma \ref{lem:ellq-almost}.
		Moreover, we have $S_0 = 0$ so $\EE(\|S_0\|^2) = 0 < + \infty$.
		Then, by Theorem \ref{th:almost-clt}, up to taking a coupling, there exist a sequence $(y_n)$ of identically distributed centred Gaussian random variables, such that:
		\begin{equation*}
			\lim_{n \to  \infty}\EE\left(\left|\frac{\Delta\kappa(\widetilde\gamma_{0,n}) - n \delta(\nu)}{\sqrt{n}} - y_n\right|^2\right)= 0. \qedhere
		\end{equation*}
	\end{proof}
	
	\begin{proof}[Proof of Theorem \ref{th:clt-high}]
		Let $\nu$ be totally irreducible and let $(\gamma_n)\sim \nu^{\otimes\NN}$.
		Assume that $\EE(N(\gamma_0)) < \infty$. 
		For all $0 \le m \le n$, write $S_{m,n} = \Delta\dot\kappa(\widetilde\gamma_{m,n})$.
		The process $(S_{m,n})$ is $\mathrm{L}^2$-almost additive by Lemma \ref{lem:ellq-almost}.
		Moreover, we have $S_0 = 0$ so $\EE(\|S_0\|^2) = 0 < + \infty$.
		Then, by Theorem \ref{th:almost-clt}, up to taking a coupling, there exist a sequence $(y_n)$ of identically distributed centred Gaussian random variables in $\RR^d$, such that:
		\begin{equation*}
			\lim_{n \to  \infty}\EE\left(\left|\frac{\Delta\kappa(\widetilde\gamma_{0,n}) - n \delta(\nu)}{\sqrt{n}} - y_n\right|^2\right)= 0. \qedhere
		\end{equation*}
	\end{proof}

	
	\section{On the Generalized Central limit Theorem}
	
	In the present section, we detail how Theorems \ref{th:wlln} and \ref{th:clt-Delta} imply Theorem \ref{th:GCLT-1}, from which Theorem \ref{th:loi-stable} follows. 
	With the same argument, Theorems \ref{th:wlln-high} and \ref{th:clt-high} imply Theorem \ref{th:GCLT-Higher}.
	
	Let us state all the facts about the domain of attraction of stable laws that we will need to use in the proof of Theorem \ref{th:loi-stable}, Theorem \ref{th:GCLT-1} and Theorem \ref{th:GCLT-Higher}.
	
	\begin{Lem}[Black box, Lévi, Feller~\cite{Feller_68}]\label{lem:stable-law}
		Let $d \ge 1$ and let $V = \RR^d$.
		Let $0 < \alpha \le 2$ and let $\mathcal{L}$ be a non-degenerate $\alpha$-stable law on $V$.
		Let $\mu$ be a probability distribution on $V$ and let $(x_n)_{n \ge 0} \sim \mu^{\otimes\NN}$.
		Let $(a_n)_{n \ge 0} \in \RR_{\ge 0}$ and $(b_n)_{n \ge 0} \in V^\NN$ be non random sequences such that the distribution of $(\overline{x}_n - b_n)/a_n$ converges in the weak $*$ topology to $\mathcal{L}$.
		Then the following assertions hold:
		\begin{enumerate}
			\item For all $q < \alpha$, $\mu$ has a finite moment of order $q$, \ie $\EE\|x_0\|^q < + \infty$.\label{lq}
			\item Up to a coupling, there exist a sequence of random variables $(y_n)$ such that for all $n$, we have $y_n \sim \mathcal{L}$ and for all $q < \alpha$, we have: \label{wass}
			\begin{equation*}
				\lim_{n \to \infty} \EE\left(\left\|\frac{\overline{x}_n - b_n}{a_n} - y_n\right\|^q\right) = 0.
			\end{equation*}
			\item For all $q > \alpha$, we have $n^{1/q} / a_n \to 0$ and for $\alpha = 2$, if we moreover assume that $\EE(\|x_0\|^2) = + \infty$, then $\sqrt{n} / a_n \to 0$.\label{lim-an}
		\end{enumerate}
	\end{Lem}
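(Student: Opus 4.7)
My plan is to derive each of the three conclusions from the classical structural theorems about domains of attraction of stable laws. The first and third points are consequences of the regular variation that convergence to an $\alpha$-stable law forces on $\mu$, while the second point is the delicate one, requiring a uniform integrability argument on top of weak convergence.

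For point \eqref{lq}, I would first invoke the standard fact that if $\mu$ belongs to the domain of attraction of a non-degenerate $\alpha$-stable law with $\alpha < 2$, then the tail $\PP(\|x_0\| > t)$ is regularly varying of index $-\alpha$. This is unpacked from the Poisson point process description of $\mathcal L$ recalled in the introduction: the condition $n \PP(\|x_0\| > a_n t) \to C t^{-\alpha}$ forces both this tail estimate and the regular variation of $a_n$ of index $1/\alpha$. The finite moment of order $q < \alpha$ then follows from Fubini: $\EE\|x_0\|^q = \int_0^{+\infty} q t^{q-1} \PP(\|x_0\| > t) dt$, whose integrand decays like $t^{q-\alpha-1}$ at infinity up to slowly varying factors. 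For $\alpha = 2$, the analogous fact is that the truncated second moment is slowly varying, which still yields finite moments of all orders $q < 2$. Point \eqref{lim-an} follows from the same regular variation: writing $a_n = n^{1/\alpha} \widetilde L(n)$ with $\widetilde L$ slowly varying, the ratio $n^{1/q}/a_n$ tends to $0$ as soon as $q > \alpha$. For $\alpha = 2$, the assumption $\EE(\|x_0\|^2) = + \infty$ forces the truncated variance $T(t) = \int_{\|x\| \le t}\|x\|^2 d\mu(x)$ to tend to infinity, and the balance $n T(a_n)/a_n^2 \to \mathrm{const}$ then gives $a_n/\sqrt n \to + \infty$.

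The harder point \eqref{wass} would proceed by upgrading weak-$*$ convergence to convergence in the Wasserstein distance $\mathcal W_q$. The main technical step is to show that $\|(\overline x_n - b_n)/a_n\|$ is uniformly bounded in $\mathrm L^{q'}$ for every $q < q' < \alpha$. I would handle this by truncating each $x_k$ at level $a_n$: the bounded part contributes a variance-type bound via Chebyshev combined with the regular variation of the truncated moment, while the unbounded part is controlled by noting that the number of indices $k < n$ with $\|x_k\| > t a_n$ is stochastically dominated by a $\mathrm{Binomial}(n, \PP(\|x_0\| > t a_n))$ whose mean is of order $t^{-\alpha}$, so that its $q'$-th moment is finite when $q' < \alpha$. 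Once such a uniform $\mathrm L^{q'}$ bound is established, weak-$*$ convergence combined with uniform $\mathrm L^{q'}$-integrability implies that the $q$-Wasserstein distance between the law of $(\overline x_n - b_n)/a_n$ and $\mathcal L$ tends to zero, from which the desired coupling with $y_n \sim \mathcal L$ follows by a standard optimal-transport construction.

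The main obstacle I foresee is precisely this uniform $\mathrm L^{q'}$-boundedness just below the stability parameter, because the contribution of the single largest summand is already of order $a_n$, so one has to track the power-type tail estimates carefully without losing constants; in the vector-valued setting this is best done directionally using the harmonic measure $\beta$ on the unit sphere recalled in the introduction, so as not to lose the angular information encoded in the limit distribution.
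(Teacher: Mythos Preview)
Your treatment of points \eqref{lq} and \eqref{lim-an} via regular variation of the tail and of the normalizing sequence $a_n$ is the standard route and matches how the paper (implicitly, by citing Feller) handles them.

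For point \eqref{wass}, your approach is correct but genuinely different from the paper's. The paper does not give a full proof either---it treats the lemma as a black box---but the discussion following the statement sketches two alternative arguments: one via order statistics (citing \cite{series}), and one via Fourier analysis, observing that the finite $q$-th moment for $q < \alpha$ forces the characteristic function $\widehat\mu$ to be uniformly $\mathcal{C}^q$, so that the characteristic functions of the rescaled sums converge in a Sobolev norm $W^{q,\infty}$, which in turn implies Wasserstein-$q'$ convergence for all $q' < q$. Your route is instead a direct uniform-integrability argument: truncate at level $a_n$, bound the truncated part by a variance estimate and the overshoot by a binomial tail, and conclude uniform $\mathrm{L}^{q'}$-boundedness for $q < q' < \alpha$. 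Both arguments are valid. Yours is more hands-on and avoids the (somewhat delicate) passage from Sobolev regularity of characteristic functions to Wasserstein convergence; the paper's Fourier sketch is shorter to state but relies on a functional-analytic fact that is not entirely elementary. The order-statistics route the paper also mentions is closer in spirit to your truncation idea, since both isolate the contribution of the largest summands.

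Your identified obstacle---that the single largest summand is already of order $a_n$---is exactly the right place to be careful, and your proposed handling via the binomial count of large excursions is the standard way to resolve it.
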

	
	Point \eqref{wass} is not explicitly mentioned in \cite{Feller_68}.
	Saying that there exists an identically distributed sequence $(y_n)$ such that $\frac{\overline{x}_n - b_n}{a_n} - y_n$ converges in probability to $0$ is equivalent to saying that the law of $\frac{\overline{x}_n - b_n}{a_n}$ converges in the weak-$*$ topology to the law of the $y_n$'s.
	The fact that this convergence also holds in $\mathrm{L}^q$ for all $q < \alpha$ is equivalent to saying that moreover, the moment of order $q$ of $\frac{\overline{x}_n - b_n}{a_n}$ converges to the moment of order $q$ of the $y_n$'s.
	This is direct when one proves the convergence via order statistics as detailed in \cite{series}.
	
	Via Fourier transform, we need to use the fact that for all $q < \alpha$, the function $\widehat{\mu}$ is uniformly $\mathcal{C}^q$ is the sense that there exists a constant $C$ such that for all $\Theta, \eps \in V^*$, we have:
	\begin{equation*}
		\left|\widehat\mu(\theta + \eps) - \widehat\mu(\theta) - \sum_{k = 1}^{\lfloor q \rfloor} \eps^{\otimes k}\partial^k\widehat\mu(\theta)/k! \right| \le C\|\eps\|^q.
	\end{equation*}
	Therefore, the Fourier transform of the law of $\frac{\overline{x}_n - b_n}{a_n}$ converges in the Sobolev space $W^{q, \infty}$ to the Fourier transform of $\mathcal{L}$.
	This implies that the law of $\frac{\overline{x}_n - b_n}{a_n}$ converges to $\mathcal{L}$ in the Wasserstein $\mathrm{L}^{q'}$ topology for all $q' < q$ and therefore for all $q' < \alpha$.
	
	Let un now prove Theorem \ref{th:GCLT-1} and Theorem \ref{th:GCLT-Higher} via the following Lemma:

	\begin{proof}[Proof of Theorem \ref{th:GCLT-1} and Theorem \ref{th:GCLT-Higher}]
		To prove Theorem \ref{th:GCLT-1}, write $\check\kappa = \kappa$ and to prove Theorem \ref{th:GCLT-Higher}, write $\check{\kappa} = \dot\kappa$.
		For all $n$, write $x_n = \check\kappa(\gamma_n)$.
		For all $n$, we have $\check\kappa(\overline{\gamma}_n) = \overline{x}_n + \Delta\check\kappa(\widetilde{\gamma}_{0,n})$.
		Let $(a_n)$ and $(b_n)$ be non-random sequences such that $(\overline{x}_n - b_n)/ a_n$ converges to the $\alpha$-stable limit $\mathcal{L}$. 
		Let $(y_n)$ be as in point \eqref{wass} in Lemma \ref{lem:stable-law}.
		For $q \ge 1$, let $b = \delta(\nu)$ and for $q < 1$ let $b = 0$.
		For all $n$, we have formally:
		\begin{equation}\notag
			\frac{\check\kappa(\overline{\gamma}_n) - b_n + nb}{a_n} = \frac{\overline{x}_n - b_n}{a_n} - y_n + \frac{\Delta\check\kappa(\widetilde{\gamma}_{0,n}) + n b}{a_n}
		\end{equation}
		Hence, for $q \le 1$, by triangular inequality and monotonicity of the expectation we have:
		\begin{equation}\notag
			\EE\left(\left\|\frac{\check\kappa(\overline{\gamma}_n) - b_n + nb}{a_n}\right\|^q\right) \le \EE\left(\left\|\frac{\overline{x}_n - b_n}{a_n} - y_n \right\|^q\right)+ \EE\left(\left\|\frac{\Delta\check\kappa(\widetilde{\gamma}_{0,n}) + n b}{a_n}\right\|^q\right)
		\end{equation}
		And for $q > 1$, by Minkowski's inequality, we have:
		\begin{equation}\notag
			\EE\left(\left\|\frac{\check\kappa(\overline{\gamma}_n) - b_n + nb}{a_n}\right\|^q\right)^{1/q} \le \EE\left(\left\|\frac{\overline{x}_n - b_n}{a_n} - y_n \right\|^q\right)^{1/q} + \EE\left(\left\|\frac{\Delta\check\kappa(\widetilde{\gamma}_{0,n}) + n b}{a_n}\right\|^q\right)^{1/q}
		\end{equation}
		For $q < 2$, by Theorem \ref{th:wlln} or Theorem \ref{th:wlln-high} applied to $(\gamma_n)$, we have:
		\begin{equation*}
			\lim_{n \to \infty} \EE\left(\left\|\frac{\Delta\check\kappa(\widetilde{\gamma}_{0,n}) + nb}{n^{1/q}}\right\|^q\right) = 0.
		\end{equation*}
		For $q = 2$, by Theorem \ref{th:clt-Delta} or Theorem \ref{th:clt-high} applied to $(\gamma_n)$, up to a coupling, there exist a sequence $y'_n$ of identically distributed Gaussian variables such that we have:
		\begin{equation*}
			\lim_{n \to \infty} \EE\left(\left\|\frac{\Delta\check\kappa(\widetilde{\gamma}_{0,n}) + nb}{\sqrt{n}} - y_n'\right\|^2\right) = 0.
		\end{equation*}
		Therefore, by triangular inequality for the $\mathrm{L}^2$ norm, we have:
		\begin{equation*}
			\lim_{n \to \infty} \EE\left(\left\|\frac{\Delta\check\kappa(\widetilde{\gamma}_{0,n}) + nb}{\sqrt{n}}\right\|^2\right) = \EE(\|y'_0\|^2) < + \infty.
		\end{equation*}
		By \eqref{lim-an} in Lemma \ref{lem:stable-law}, we have $n^{1/q} / a_n \to 0$.
		So, in both cases, we have:
		\begin{equation*}
			\lim_{n \to \infty} \EE\left(\left\|\frac{\Delta\check\kappa(\widetilde{\gamma}_{0,n}) + nb}{a_n}\right\|^q\right) = 0.
		\end{equation*}
		Moreover, by \eqref{wass} in Lemma \ref{lem:stable-law}, we have:
		\begin{equation}\notag
			\lim_{n \to \infty} \EE\left(\left\|\frac{\overline{x}_n - b_n}{a_n} - y_n\right\|^q\right) = 0.
		\end{equation}
		Therefore:
		\begin{equation*}
			\EE\left(\left\|\frac{\check\kappa(\overline{\gamma}_n) - b_n + nb}{a_n}\right\|^q\right)^{\min\{1/q, 1\}} = 0.
		\end{equation*}
		We use the fact that $\min\{1/q, 1\} > 0$ to conclude.
	\end{proof}
	
	Here, we have in fact proven the following intermediate Lemma and applied it to $\Delta\check\kappa\widetilde\gamma$.
	\begin{Lem}
		Let $S$ be a mixing process that takes values in a finite dimensional real vector space $V$.
		Let $0 < \alpha \le 2$ and assume that there exists $C \ge 0$ and $q \in (\alpha, + \infty) \cup \{2\}$ such that $S$ is $C$-almost additive in $\mathrm{L}^q$.
		Assume that the law of $S_0$ is in the domain of attraction of an $\alpha$-stable law $\mathcal{L}$ and that $S_0$ has an infinite moment of order $2$, let $(a_n)$ and $(b_n)$ be non random sequences and let $(y_n)$ be a random sequence such that for all $n$, we have $y_n \sim \mathcal{L}$ and for all $p < \alpha$, we have:
		\begin{equation*}
			\lim_{n \to \infty} \EE\left(\left\|\frac{\sum_{k = 0}^{n-1} S_k - b_n}{a_n} - y_n\right\|^p\right) = 0.
		\end{equation*}
		Then there exists a constant $b \in V$ that can be taken arbitrarily when $n / a_n \to 0$ and is equal to $\lim_n -\EE(\Delta S_{0,n})/n$ otherwise and such that:
		\begin{equation}\label{cv-Wp}
			\lim_{n \to \infty} \EE\left(\left\|\frac{S_{0, n} - b_n + n b}{a_n} - y_n\right\|^p\right) = 0.
		\end{equation}
	\end{Lem}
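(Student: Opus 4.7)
The plan is to decompose
\begin{equation*}
S_{0,n} = \sum_{k=0}^{n-1} S_k + \Delta S_{0,n}
\end{equation*}
and handle the two pieces separately: the additive sum is absorbed by the stable-law hypothesis, while the defect $\Delta S_{0,n}$ is controlled by the almost-additive weak law and CLT of Section \ref{section:almost-additive}. Fixing $p \in (0,\alpha)$ and applying either Minkowski (for $p \ge 1$) or sub-additivity of $\|\cdot\|^p$ (for $p < 1$) yields a constant $C_p$ with
\begin{equation*}
\EE\left\|\frac{S_{0,n} - b_n + nb}{a_n} - y_n\right\|^p \le C_p\left( \EE\left\|\frac{\sum_{k<n} S_k - b_n}{a_n} - y_n\right\|^p + \EE\left\|\frac{\Delta S_{0,n} + nb}{a_n}\right\|^p\right),
\end{equation*}
so the first summand vanishes by hypothesis and the problem reduces to showing that the defect term vanishes.

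The defect process $(\Delta S_{m,n})_{0 \le m \le n}$ is itself mixing, is $C$-almost additive in $\mathrm{L}^q$ (since $\Delta(\Delta S) = \Delta S$), and has $\Delta S_0 = 0$, hence satisfies the hypotheses of Theorems \ref{almost-wlln-no-drift}, \ref{th:almost-wlln} and \ref{th:almost-clt}. I would choose $b$ as prescribed: $b = 0$ when $q < 1$, and $b = -\lim_n \EE(\Delta S_{0,n})/n$ when $q \ge 1$ (the limit being produced by Theorem \ref{th:almost-wlln} or \ref{th:almost-clt}). If $q \in (\alpha,2)$, Theorem \ref{almost-wlln-no-drift} or \ref{th:almost-wlln} yields $\EE\|\Delta S_{0,n} + nb\|^q = o(n)$; Jensen's inequality (using $p < \alpha < q$) converts this to $\EE\|\Delta S_{0,n} + nb\|^p = o(n^{p/q})$; and since $n^{1/q}/a_n \to 0$ by point \eqref{lim-an} of Lemma \ref{lem:stable-law}, we get $\EE\|(\Delta S_{0,n} + nb)/a_n\|^p \to 0$. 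If $q = 2$, Theorem \ref{th:almost-clt} couples $\Delta S_{0,n} + nb$ with a centred Gaussian and forces $\EE\|\Delta S_{0,n} + nb\|^2 = O(n)$; Jensen gives $\EE\|\Delta S_{0,n} + nb\|^p = O(n^{p/2})$; and the hypothesis $\EE\|S_0\|^2 = +\infty$ combined with point \eqref{lim-an} of Lemma \ref{lem:stable-law} forces $\sqrt{n}/a_n \to 0$, closing the argument.

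Finally, in the regime $n/a_n \to 0$ (which is exactly the case $\alpha < 1$), the value of $b$ is genuinely free: replacing it by any other $b' \in V$ only adds the deterministic term $n(b-b')/a_n$, which tends to $0$ in norm, so the conclusion is unaffected. The only mild subtlety is matching the $\mathrm{L}^q$ estimate produced by the almost-additive machinery to the weaker $\mathrm{L}^p$ topology requested by the conclusion, which Jensen handles cleanly provided $p < \alpha \le q$ or $q = 2$; once this is done, the two rate bounds $n^{1/q}/a_n \to 0$ and $\sqrt{n}/a_n \to 0$ extracted from the stable-law black box do all the remaining work.
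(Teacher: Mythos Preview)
Your proof is correct and follows essentially the same strategy as the paper's (which presents the argument inside the proof of Theorems \ref{th:GCLT-1} and \ref{th:GCLT-Higher}): decompose $S_{0,n}$ into the additive sum plus the defect $\Delta S_{0,n}$, control the defect via the almost-additive weak law or CLT applied to the $\Delta S$ process, and combine with the rate $n^{1/q}/a_n \to 0$ (respectively $\sqrt{n}/a_n \to 0$) from Lemma \ref{lem:stable-law}. Your explicit separation of the exponents $p < \alpha$ and $q$, bridged by Jensen, is in fact cleaner than the paper's own write-up, which overloads the symbol $q$ for both roles.
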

	
	Let us now prove Theorem \ref{th:loi-stable}. 
	We simply need to show that the hypotheses of Theorem \ref{th:GCLT-1} hold.
	
	\begin{proof}[Proof of Theorem \ref{th:loi-stable}]
		Let $(\gamma_n) \sim \nu^{\otimes\NN}$.
		We want to apply Theorem \ref{th:GCLT-1} to $(\gamma_n)$. 
		For that we need to check that there exist $q$ such that $\alpha < q < 2$ or $\alpha = q = 2$ and $\EE(N(\gamma_0)^{q / 2}) < + \infty$.
		Remember that we assume $\nu$ to be supported on $\SL(E)$.
		Therefore, we have $\sum_{i = 1}^d\kappa_i(\gamma_0) = \log|\det(\gamma_0)| = 0$ almost surely.
		We also know that $\kappa_i(\gamma_0) \le \kappa(\gamma_0)$ for all $1 \le i \le d$.
		Therefore, we have $\kappa_d(\gamma_0) = -\sum_{i = 1}^{d-1}\kappa_i(\gamma_0)\ge -(d-1) \kappa(\gamma_0)$ for all $i$.
		Moreover, we have formally $N(\gamma_0) = \kappa_1(\gamma_0) - \kappa_d(\gamma_0) \le d \kappa(\gamma_0)$.
		By \eqref{lq} in Lemma \ref{lem:stable-law}, we have $\EE(\kappa(\gamma_0)^{q / 2}) < + \infty$ for all $q$ such that $q/2 < \alpha$.
		Let $q = \sqrt{2 \alpha}$, then we have $q/2 < \alpha$ (and therefore $\EE(N(\gamma_0)^{q / 2}) < + \infty$) and $\alpha < q < 2$ or $\alpha = q = 2$ so we may apply Theorem \ref{th:GCLT-1} to $(\gamma_n)$. 
	\end{proof}

	To truly prove the General Central Limit Theorem, we would need to show that the converse of Theorems \ref{th:loi-stable} and \ref{th:GCLT-Higher} hold, namely, given $(\gamma_n)_n$ i.i.d. that satisfies the correct algebraic assumptions and such that $\kappa(\gamma_0)$ has an infinite moment of order $2$, if we have sequences $(a_n)$ and $(b'_n)$, such that $\frac{\kappa(\overline{\gamma}_n) - b'_n}{a_n}$ converges in law to a non-degenerate limit $\mathcal{L}$, then there exists $b \ge 0$ such that $\frac{\sum_{k = 0}^{n-1}\kappa(\gamma_k) - b'_n -  nb}{a_n}$ converges to the same limit $\mathcal{L}$ that is therefore a stable distribution.
	
	To prove that we need a fine understanding of the proof of the Generalized Central Limit Theorem via ordered statistics quite outside the scope of the present article that is not really about stable laws.

	\bibliographystyle{alpha}
	\bibliography{biblio.bib}
	
\end{document}